\def\d{\textup{d}}
\font\tenmsb=msbm5    \textfont\msbfam=\tenmsb \font\sevenmsb=msbm5
\font\fivemsb=msbm5
\font\tenbig=msbm5 scaled \magstep2   \textfont\bigfam=\tenbig
\font\sevenbig=msbm7 scaled \magstep2 \scriptfont\bigfam=\sevenbig
\font\fivebig=msbm5 scaled \magstep2
\newtheorem{theorem}{Theorem}[section]
\newtheorem{lemma}[theorem]{Lemma}
\newtheorem{corollary}[theorem]{Corollary}
\theoremstyle{definition}
\newtheorem{remark}[theorem]{Remark}
\newtheorem{definition}[theorem]{Definition}
\begin{document}
	\title{\bf  Multilinear Strongly Singular Integral Operators with Generalized Kernels on RD-Spaces}
	\author{\bf Kang Chen, Yan Lin$^*$ and Shuhui  Yang}
	\renewcommand{\thefootnote}{}
	\date{}
	\maketitle
	\footnotetext{2020 Mathematics Subject Classification. 42B25, 42B35}
	\footnotetext{Key words and phrases. \text{\rm RD}-space, weighted boundedness,  weighted Lebesgue 
		space, multilinear strongly
		singular integrals, weighted Morrey 
		space. }
	\footnotetext{This work was partially supported by the National Natural Science Foundation of China (Grant Nos. 12471090)  and China Postdoctoral Science Foundation (Grant Nos. 2024M760238).}
	\footnotetext{$^*$Corresponding author, E-mail:linyan@cumtb.edu.cn}
	\begin{minipage}{13.5cm}
		{\bf Abstract}
		\quad 
		In this article, we introduce a class of multilinear strongly singular integral operators with generalized kernels on the RD-space. The boundedness of these operators on weighted Lebesgue spaces is established. Moreover, two types of endpoint estimates and their boundedness on generalized weighted Morrey spaces are obtained. Our results further generalize the relevant conclusions on generalized kernels in Euclidean spaces. Moreover, the weak-type results on weighted Lebesgue spaces are brand new even in the situation of Euclidean spaces. In addition, when the generalized kernels degenerate into classical kernels, our research results also extend the relevant known results. It is worth mentioning that our RD spaces are more general than theirs.

	\end{minipage}
	
	\section{Introduction}\label{sec1}
	
	\qquad In the 1950s, Calder\'{o}n and Zygmund introduced the classical Calder\'{o}n--Zygmund operator and established the foundational theory of Calder\'{o}n--Zygmund operators (see, for instance, \cite{A1952,A1954,A1957}). Since then, the theory of singular integrals has rapidly developed into a central component of harmonic analysis, with wide-ranging applications in fields such as partial differential equations, nonlinear analysis, and geometric measure theory (see, for instance, \cite{L2007,E1970,L1977,M1990,G1991,F1983,C1972,L2014}). However, not all singular integral operators are Calder\'{o}n--Zygmund  operators, and some operators have stronger singularities. With the development of the theory of singular integrals, more and more scholars have begun to study strongly singular integral operators with stronger singularities (see, for instance, \cite{J2005, J2006, J20062}). The notion of strongly singular integral operators was initially introduced through multiplier operators, defined as $$(T_{\alpha,\beta}f)^{\wedge}(\xi)=\theta(\xi)\frac{e^{i|\xi|^{\alpha}}}{|\xi|^{\beta}}\hat{f}(\xi),$$ where $0<\alpha<1$, $0<\beta<n\alpha/2$ and $\theta(\xi)$ is a standard cutoff function. Fefferman referred to these operators as weakly-strongly singular Calder\'{o}n--Zygmund operators in \cite{C1970}. Stein, Wainger,  Hirschman, and Fefferman established the boundedness of these operators in various function spaces respectively (see, for instance, \cite{I1959,E1967,S1965}). As convolution-type operators, their behavior depends on the size and smoothness conditions of the kernel function. As research progressed, scholars began to explore non-standard kernels (see, for instance, \cite{M1987,R1982,F2003}), leading to the introduction of non-convolution-type strongly singular integral operators by Alvarez and Milman in \cite{J1986}. Compared to classical Calder\'{o}n--Zygmund operators, these operators exhibit stronger singularities in their kernels.\par
	With the advancement of harmonic analysis, attention gradually shifted towards the study of multilinear operators.  Coifman and Meyer extended the theory of linear Calder\'{o}n--Zygmund operators to the multilinear case, defining multilinear Calder\'{o}n--Zygmund operators and applying them to areas such as wavelet analysis (see, for instance, \cite{R1975,R1978,R19782}). Their work stimulated the development of multilinear operator theory, opening new avenues for modern applications in harmonic analysis and drawing increasing interest in the study of multilinear singular integrals. Lin \cite{Y2020} defined a class of multilinear strongly singular Calder\'{o}n--Zygmund operators, which, unlike typical multilinear Calder\'{o}n--Zygmund operators, do not require any size condition on their kernel and exhibits stronger singularities near the diagonal. The boundedness of these operators in various function spaces was established in \cite{Y2019,Y20192}. Subsequently, Yang and Lin \cite{S2021} extended this research by replacing the classical kernel conditions with a generalized kernel condition of integral type, defining generalized strongly singular integral operators and establishing their boundedness on weighted Lebesgue spaces, variable exponeat Lebesgue spaces, and BMO spaces. To match the multiple weights, Wei et al.  defined a new class of strongly singular integral operators with generalized kernels in \cite{B2023}, which are defined as follows.
	\begin{definition}\label{definition1.1}{\rm(see \cite{B2023})}
		$T$ is given by $$T(f_1,...,f_m)(x)=\int_{(\mathbb{R}^n)^m}K(x,y_1,...,y_m)\prod\limits_{j=1}^mf_j(y_j)\d {\vec{y}},$$ 
		where $\{f_j\}_{j=1}^{\infty} \in C_c^{\infty}(\mathbb{R}^n)$ and $x\notin \cap_{j=1}^m {\rm{supp}} \{f_j\}$. $T$ is called an \textit{$m$-linear strongly singular integral operator with generalized kernel} on $\mathbb{R}^n$ if  the following conditions are satisfied.
		\begin{itemize}
			\item [\rm (i)] Let $m\in \mathbb{N}_+$ and $K(y_0,y_1,...,y_m)$ is a function defined away from the diagonal $y_0=y_1=...=y_m$ in $(\mathbb{R}^n)^{m+1}$. For some $\varepsilon>0$, $0 < \alpha \leq 1$, for any $k\in \mathbb{N}_+$,
			\begin{align*}
				& \quad\left(\int_{2^k\sqrt{m}\left|x-x'\right|^t\leq \left|\vec{y}-\vec{x}\right|\le 2^{k+1}\sqrt{m}\left|x-x'\right|^t}\left|K(x,y_1,...,y_m)-K(x',y_1,...,y_m)\right|^{p_0}\d {\vec{y}} \right)^{\frac{1}{p_0}}            \\
				&\leq \displaystyle  C|x-x'|^{\varepsilon-t\left(\frac{nm}{p_0'}+\frac{\varepsilon}{\alpha}\right)}2^{-k\left(\frac{nm}{p_0'}+\frac{\varepsilon}{\alpha}\right)},
			\end{align*}
			where $\vec{y}:=(y_1,...,y_m)$, $\vec{x}:=(x,...,x)$, $t=1$ when $|x-x'| \geq 1$, $t=\alpha$ when $|x-x'| < 1$, $p_0$ is a fixed  positive number that satisfies $1/p_0+ 1/p_0'=1$ and $\displaystyle  1<p_0<\infty $.
			
			\item [\rm (ii)]   For some given numbers $1\le r_1,..., r_m<\infty $ with $1/r=1/r_1+\cdots+1/r_m$, $T$ is bounded from $L^{r_1}\left(\mathbb{R}^n\right)\times \cdots \times L^{r_m}\left(\mathbb{R}^n\right)$ to $L^{r,\infty}\left(\mathbb{R}^n\right)$.

			\item [\rm (iii)]  For some given numbers $1\le l_1,..., l_m<\infty $ with $1/l=1/l_1+\cdots+1/l_m$, $T$ is bounded from $L^{l_1}(\mathbb{R}^n)\times \cdots \times L^{l_m}(\mathbb{R}^n)$ to $L^{q,\infty}(\mathbb{R}^n)$, where $0<l/q\leq \alpha$.	
		\end{itemize}
	\end{definition}	
Wei et al. \cite{B2023} established the boundedness of these operators on weighted Lebesgue spaces, particularly in the context of multiple weights, and further demonstrated the boundedness of commutators generated by BMO functions. For some other results, one can see \cite{B2023, S2022, S2023} and related references. \par  
	The development of \text{\rm RD}-spaces originated from the homogeneous spaces defined by Coifman and Weiss (see, for instance, \cite{R1971, R1977}), which were extended to \text{\rm RD}-spaces by Han et al. (see, for instance, \cite{ Y2006, Y2008}). In \cite{L20142}, Grafakos et al. demonstrated that dyadic cubes, covering lemmas, and the Calder\'{o}n--Zygmund decomposition remain applicable in \text{\rm RD}-spaces, positioning \text{\rm RD}-spaces as an abstract generalization of classical Euclidean spaces. In recent years, \text{\rm RD}-spaces have found widespread applications in fields such as analysis and partial differential equations (see, for instance, \cite{P2010, P2011, D2010, D2011, D20112}). Grafakos and Torres \cite{L2002} established the theory of multilinear Calder\'{o}n--Zygmund operators in \text{\rm RD}-spaces, and building on their work, Grafakos et al. \cite{L20142} extended the classical Euclidean Calder\'{o}n--Zygmund theory to \text{\rm RD}-spaces and defined multilinear Calder\'{o}n--Zygmund operators in this setting. To further explore Calder\'{o}n--Zygmund theory in \text{\rm RD}-spaces, Li and Wu \cite{W2024} introduced strongly singular integral operators on RD-spaces, where the conditions of kernels are pointwise estimates. They established the boundedness of the operators on weighted Lebesgue spaces and on BMO spaces. Moreover, they also established the boundedness of multilinear commutators  with BMO functions.
	\begin{definition}\label{definition1.2}{\rm(see \cite{L20142})}
		Let $(\mathcal{X},d)$ be a metric space,  for any $x \in \mathcal{X}$ and $r\in (0,+\infty)$,  the ball $B(x,r):=\{y\in \mathcal{X}:d(x,y)<r\}$. Suppose that $\mu$ is a regular Borel measure defined on a $\sigma$-algebra, which contains all Borel sets induced by the open balls $\{B(x,r):x\in \mathcal{X},r>0\}$, and that $0<\mu\left[B(x,r)\right]<\infty $ for all $x\in \mathcal{X}$ and $r>0$.

		\begin{itemize}
			\item [\rm (i)] The triple ($\mathcal{X},d,\mu $) is called a space of homogeneous type if $\mu$ satisfies the doubling condition: there exists a constant $C_1\in [1,\infty)$ such that, for all $x\in \mathcal{X}$ and $r>0$,
			\begin{equation}\label{1.1}
				\mu[B(x,2r)]\leq  C_1\mu[B(x,r)].
			\end{equation}
			
			\item [\rm (ii)]  The triple ($\mathcal{X},d,\mu $) is called an \text{\rm RD}-space if it is a space of homogeneous type and $\mu$ satisfies the reverse doubling condition: there exists a $\kappa \in (0,\infty)$ and  $C_2\in (0,1]$ such that, for all $x\in \mathcal{X}$, $0<r < \rm 2diam(\mathcal{X})$ and $1\leq \lambda< \rm 2diam(\mathcal{X})/r$,
			\begin{equation}\label{1.2}
				C_2\lambda^{\kappa}\mu[B(x,r)]\leq  \mu[B(x,\lambda r)],
			\end{equation}
			where $${\rm diam}(\mathcal{X}):=\sup\limits_{x,y\in \mathcal{X}}d(x,y).$$
		\end{itemize}
	\end{definition}

	\begin{remark}\label{Remark1.1}
		\begin{itemize}
			\item [\rm (i)] For an \text{\rm RD}-space $\mathcal{X}$, by \eqref{1.1} and \eqref{1.2}, there exist $C_3\in [1,\infty)$ and $n\in (0,\infty)$ such that, for all $x\in \mathcal{X},r>0$, and $\lambda\geq 1$, \begin{equation}\label{1.3}
				C_2\lambda^{\kappa}\mu[B(x,r)]\leq  \mu[B(x,\lambda r)]\leq C_3\lambda^{n}\mu[B(x,r)].
			\end{equation}
			Indeed, we can choose $C_3:=C_1$ and $n:= \log_2C_1$. In some sense, $n$ measures the “upper dimension” of $\mathcal{X}$. We obviously have $n\geq \kappa$ and call such  \text{\rm RD}-space  $(\kappa,n)$-space.
			
			\item [\rm (ii)]  We will also assume that there exists a positive nondecreasing function $\varphi$ defined on $[0,\infty)$ such that for all $x\in \mathcal{X}$ and $r>0$, \begin{equation}\label{1.4}
				\mu[B(x,r)]\thicksim \varphi(r). 
			\end{equation}
			Notice that \eqref{1.3} and \eqref{1.4} imply that \begin{equation}\label{1.5}
				r^{\kappa} \lesssim \varphi(r) \lesssim r^n \quad \text{if\quad } r \geq 1,
			\end{equation}
			and \begin{equation}\label{1.6}
				r^n \lesssim \varphi(r) \lesssim r^{\kappa} \quad \text{if\quad } 0< r< 1.
			\end{equation}
			
			\item [\rm (iii)] For a given homogeneous space $(\mathcal{X},d,\mu)$, Nakai and Yabuta proved in \cite{E1997} that if $\mu(\mathcal{X}) < \infty,$ then  there is a positive constant $R_0$ that makes $\mathcal{X}=B(x,R_0)$ for all $x\in \mathcal{X}.$ In this article, we always assume $(\mathcal{X},d,\mu)$ is an \text{\rm RD}-space and $\mu(\mathcal{X}) = \infty$.
		\end{itemize}
	\end{remark}

	\begin{definition}\label{definition1.6}{\rm(see \cite{L20142})}
		For any $\eta \in (0,1]$, we use $C^{\eta}(\mathcal{X})$ to represent all functions on $\mathcal{X}$ that satisfy the following conditions$$\|f\|_{C^{\eta}(\mathcal{X})}:=\sup\limits_{x\ne y}\frac{|f(x)-f(y)|}{d(x,y)^{\eta}}<\infty,$$ 
		and denote all functions in $C^{\eta}(\mathcal{X})$ that have a bounded support set as $C_b^{\eta}(\mathcal{X})$. The notation $[C_b^{\eta}(\mathcal{X})]'$ denotes the space of all continuous linear functionals on $C_b^{\eta}(\mathcal{X})$, which is the dual space of $C_b^{\eta}(\mathcal{X})$, refer to \cite{L20142} for more details.
		
	\end{definition}	
	\begin{definition}\label{definition1.3}{\rm(see \cite{L20142})}
		Given $m\in \mathbb{N}_{+}$, set $$\Omega_m:=\mathcal{X}^{m+1}\backslash \{(y_0,...,y_m):y_0=...=y_m\}.$$
		Suppose that $K : \Omega_m \rightarrow \mathbb{C}$ is locally integrable. The function $K$ is called a Calder\'{o}n--Zygmund kernel if there exist
		constants $C_K\in (0,+\infty)$ and $\delta \in (0,1]$ such that, for all $(y_0,y_1,...,y_m) \in \Omega_m$,$$|K(y_0,y_1,...,y_m)|\leq C_K\frac{1}{\left[\sum_{k=1}^mV(y_0,y_k)\right]^m},$$
		and that, for all $k\in \{1,...,m\}$,
		\begin{align*}
			\quad|K(y_0,y_1,...,y_k,...,y_m)-K(y_0,y_1,...,y_k',...,y_m)|
			\leq C_K\left[\frac{d(y_k,y_k')}{\max\limits_{0\leq k\leq m}d(y_0,y_k)}\right]^{\delta}\frac{1}{\left[\sum_{k=1}^mV(y_0,y_k)\right]^m},
		\end{align*}
		with $V(y_0,y_k):=\mu\left[B\left(y_0,d(y_0,y_k)\right)\right]$, whenever $d(y_k,y_k') \leq \max\limits_{0\leq k \leq m} d(y_0,y_k)/2$.
	\end{definition}
	Inspired by the above-mentioned studies, this article defines a class of strongly singular integral operators with generalized kernels in RD-spaces, further exploring the Calder\'{o}n--Zygmund theory on RD-spaces.
	
	\begin{definition}\label{definition1.4}
		Suppose that $K : \Omega_m \rightarrow \mathbb{C}$ is locally integrable. The function $K$ is called a \textit{strongly singular generalized kernel} if the following conditions are satisfied. For some $\varepsilon>0$, $0 < \alpha \leq 1$, for any $k\in \mathbb{N}_+$,
		\begin{align*}
			& \displaystyle \quad\left(\int_{2^kd(x,x')^t\leq \rho(\vec{y},\vec{x})\le 2^{k+1}d(x,x')^t}|K(x,y_1,...,y_m)-K(x',y_1,...,y_m)|^{p_0}\d {\mathbf{\overrightarrow{\mu(y)}}} \right)^{\frac{1}{p_0}}            \\
			&\displaystyle  \leq \displaystyle  Cd(x,x')^{\varepsilon-t\left(\frac{\beta m}{p_0'}+\frac{\varepsilon}{\alpha}\right)}2^{-k\left(\frac{nm}{p_0'}+\frac{\varepsilon}{\alpha}\right)},
		\end{align*}
		where $\vec{y}:=(y_1,... ,y_m)$, $\vec{x}:=(x,...,x)$, $\rho(\vec{y},\vec{x}):=\max\limits_{1\leq j\leq m}d(y_j,x)$,  $t=1$, $\beta = n$ when $d(x,x') \geq 1$, $t=\alpha$, $\beta =\kappa$ when $d(x,x') < 1$, $p_0$ is a fixed  positive number that satisfies $1/p_0+ 1/p_0'=1$ and $\displaystyle  1<p_0<\infty $.
	\end{definition}

	\begin{definition}\label{definition1.5}
		
		Let $\eta\in (0,1]$ and $T$ be an $m$-linear operator 
		$$T: \overbrace{C_b^{\eta}(\mathcal{X})\times\cdots \times C_b^{\eta}(\mathcal{X})}^{\text{$m$ times}}\rightarrow (C_b^{\eta}(\mathcal{X}))',$$
		such that for all $f_1,... ,f_m\in C_b^{\eta}(\mathcal{X})$ and $x \notin \cap_{j=1}^m $supp$\{f_j\} $,$$T\left(f_1,...,f_m\right)(x)=\int_{\mathcal{X}^m}K(x,y_1,...,y_m)\prod\limits_{j=1}^mf_j(y_j)\d {\mathbf{\overrightarrow{\mu(y)}}}.$$
		$T$ is called an \textit{$m$-linear strongly singular integral operator with generalized kernel on \text{\rm RD}-space} (for short, $m$-GSSIO) if  the following conditions are satisfied.
		\begin{itemize}
			\item [\rm (i)] The kernel function $K$ is the strongly singular generalized kernel given in Definition \ref{definition1.4}.
			
			\item [\rm (ii)]   For some given numbers $1\le r_1,..., r_m<\infty $ with $1/r=1/r_1+\cdots+1/r_m$, $T$ is bounded from $L^{r_1}(\mathcal{X})\times \cdots \times L^{r_m}(\mathcal{X})$ to $L^{r,\infty}(\mathcal{X})$.

			\item [\rm (iii)]  For some given numbers $1\le l_1,..., l_m<\infty $ with $1/l=1/l_1+\cdots+1/l_m$, $T$ is bounded from $L^{l_1}(\mathcal{X})\times \cdots \times L^{l_m}(\mathcal{X})$ to $L^{q,\infty}(\mathcal{X})$, where $0<l/q\leq \alpha\kappa/n$.
			
		\end{itemize}
		
	\end{definition}
	
	\begin{remark}
		\begin{itemize}
			\item[\rm (i)] Obviously, if we take $\mathcal{X}$ as an $n$-dimensional Euclidean space $\mathbb{R}^n$, then the kernel condition here will revert to the case provided in Definition \ref{definition1.1} (i). Under the same remaining conditions, $T$ is the strongly singular integral operator with a generalized kernel as defined in \cite{B2023}.
			\item[\rm (ii)] When $\kappa=n$, the generalized kernel condition given in Definition \ref{definition1.4} can easily imply the classical kernel defined in \cite{W2024} by Li and Wu. In other words, the strongly singular integral operator on RD-spaces in \cite{W2024} is a special case of the operator we defined.
		\end{itemize}
	\end{remark}
	The structure of this article is arranged as follows. In Section \ref{sec2}, we provide pointwise estimates for the sharp maximal function, and based on these, we prove the strong and weak boundedness of \( T \) on weighted Lebesgue spaces. This not only generalizes the strong-type results in \cite{W2024}, but also yields new weak-type results. In Section \ref{sec3}, we present two types of endpoint estimates, namely, \( L^{\infty}(\mathcal{X}) \times \cdots \times L^{\infty}(\mathcal{X}) \rightarrow BMO(\mathcal{X}) \) and \( BMO(\mathcal{X}) \times \cdots \times BMO(\mathcal{X}) \rightarrow BMO(\mathcal{X}) \). In Section \ref{sec4}, we establish the boundedness of \( T \) on generalized weighted Morrey spaces on \text{\rm RD}-spaces.
	
	Throughout the article, for any $1\leq p<\infty$, $p'$ satisfies the condition $1/p+1/p'=1$. The notation $f\lesssim g$ represents $f\leq Cg$ and $f\gtrsim g$ represents $f \geq Cg$, where $C$ does not depend on $f$ and $g$. If $f\lesssim g \lesssim f$, we then write  $f\thicksim g$.

	\section{The Boundedness on Weighted Lebesgue Spaces}\label{sec2}
	
	\qquad In this section, we first recall some definitions and necessary lemmas that will be used later in subsection 2.1, then establish pointwise estimates of sharp maximal functions in subsection 2.2, and finally use these lemmas to establish the boundedness of strongly singular integral operators with generalized kernels on weighted Lebesgue spaces in subsection 2.3.
	
	\subsection{Definitions and Lemmas}
	
	\begin{definition}\label{definition2.1}{\rm(see \cite{L20142})}
		Let $1< p <\infty $ and $1/p + 1/p'= 1$, a non-negative locally integrable function $w$ on $\mathcal{X}$ belongs to the Muckenhoupt class $A_p$ if $$\sup\limits_{B}\left(\frac{1}{\mu(B)}\int_{B}w(x)\d{\mu(x)}\right)\left(\frac{1}{\mu(B)}\int_{B}w(x)^{1-p'}\d {\mu(x)}\right)^{p-1} < \infty,$$
		where the supremum is taken over all balls $B\subset\mathcal{X}$. When $p = 1 $, a weight $w$ belongs to the class $A_1$ if $$\sup\limits_{B}\left(\frac{1}{\mu(B)}\int_{B}w(x)\d{\mu(x)}\right)\left(\inf\limits_{B}w(x)\right)^{-1} < \infty.$$
	\end{definition}
	Set $$A_{\infty}:= \bigcup\limits_{1\leq p<\infty}A_p.$$
	For more details on the $A_p$ weights on spaces of homogeneous type, see for instance \cite{J1989}.
	
	\begin{definition}\label{definition2.2}{\rm(see \cite{L20142})}
		Let $\vec{p}:= (p_1,..., p_m)$ satisfies $1\leq p_1,  ... ,p_m<\infty$ and $1/p=1/p_1+\cdots+1/p_m$. Suppose  $\vec{w}:= (w_1,..., w_m)$ with every $w_j$ a weight. Set $\nu_{\vec{w}} := \prod\limits_{j=1}^mw_j^{p/p_j}$. We say that ${\vec{w}} $ satisfies the $A_{\vec{p}}$ condition if $$\sup\limits_{B}\left(\frac{1}{\mu(B)}\int_{B}\nu_{\vec{w}}(x)\d{\mu(x)}\right)^{1/p}\prod\limits_{j=1}^m\left(\frac{1}{\mu(B)}\int_{B}w_j(x)^{1-p_j'}\d {\mu(x)}\right)^{1/p_j'} < \infty,$$
		where, when $p_j=1$, $$\left(\frac{1}{\mu(B)}\int_{B}w_j(x)^{1-p_j'}\d {\mu(x)}\right)^{1/p_j'}$$
		is understood as $\left(\inf\limits_{B}w_j\right)^{-1}$. 
	\end{definition}

	 We now recall the $L^p$ and weighted $L^p$ spaces on \text{\rm RD}-spaces.
	\begin{definition}\label{definition2.3}{\rm(see \cite{L20142})}
		Let $f$ be a measurable function on $\mathcal{X}$. For $p>0$, the $L^p(\mathcal{X})$ space is defined by $$L^p(\mathcal{X}):=\{f: \|f\|_{L^p(\mathcal{X})}<\infty\},$$
		where $$\left\|f\right\|_{L^p(\mathcal{X})}:=\left(\int_{\mathcal{X}}|f(x)|^p\d{\mu(x)}\right)^{1/p}.$$
		The $L^{p,\infty}(\mathcal{X})$ space is defined by $$L^{p,\infty}(\mathcal{X}):=\{f: \|f\|_{L^{p,\infty}(\mathcal{X})}<\infty\},$$
		where$$\|f\|_{L^{p,\infty}(\mathcal{X})}:=\sup\limits_{\lambda>0}\lambda\left(\mu\left(\{x\in \mathcal{X} : |f(x)|>\lambda\}\right)\right)^{1/p}.$$
	\end{definition}
	
	\begin{remark}
		Let $L_{loc}^1(\mathcal{X})$ to denote the space of all locally integrable functions on $\mathcal{X}$. Furthermore, for any $q \in (0, \infty)$, $$L_{loc}^q(\mathcal{X}):=\{f: |f|^q\in L_{loc}^1(\mathcal{X})\}.$$
		
	\end{remark}
	\begin{definition}\label{definition2.4}{\rm(see \cite{L20142})}
		For any $p \in (0, \infty]$ and weight $w \in A_\infty$, denote by $L^p(\mathcal{X},w)$ the collection of all functions $f$ satisfying
		$$
		\|f\|_{L^p(\mathcal{X}, w)} := \left( \int_{\mathcal{X}} |f(y)|^p w(y) \, d\mu(y) \right)^{1/p} < \infty.
		$$
		Analogously, we denote by $L^{p, \infty}(\mathcal{X},w)$ the weak space with norm
		$$
		\|f\|_{L^{p, \infty}(\mathcal{X},w)} := \sup_{t>0} t \left( w\left( \{x \in \mathcal{X} : |f(x)| > t \} \right) \right)^{1/p},
		$$
		where $w(E) := \int_E w(x) \, d\mu(x)$ for all sets $E$ contained in $\mathcal{X}$.
		
	\end{definition}
	\begin{definition}\label{definition2.5}{\rm(see \cite{W2024})}
		Let $f\in L_{loc}^q(\mathcal{X})$ with $1\leq q<\infty $. For all balls $B \subset \mathcal{X}$, $f_B:=\frac{1}{\mu(B)}\int_{B}f(x)\d {\mu(x)}$. Then the space $BMO_q(\mathcal{X})$ is defined as follows. $$BMO_q(\mathcal{X}):=\{f\in L_{loc}^q(\mathcal{X}): \left\|f\right\|_{BMO_q(\mathcal{X})}<\infty\},$$
		where $$\left\|f\right\|_{BMO_q(\mathcal{X})}:=\left(\sup\limits_{B}\frac{1}{\mu(B)}\int_{B}|f(x)-f_B|^q\d {\mu(x)}\right)^{1/q}.$$
	\end{definition}
	
	\begin{remark}\label{Remark 2.2}
		\begin{itemize}
			\item[\rm (i)] If $q=1$, we simply write $BMO_{q}(\mathcal{X})$ as $BMO(\mathcal{X})$. For any $q\in (1,\infty)$, the norm of $BMO_q(\mathcal{X})$ is equivalent to $BMO(\mathcal{X})$.
			\item[\rm (ii)] For any $f\in L_{loc}^1(\mathcal{X})$, $$\left\|f\right\|_{BMO}\sim \sup\limits_{B}\inf\limits_{a\in \mathbb{C}}\frac{1}{\mu(B)}\int_B\left|f(x)-a\right|\d {\mu(x)} .$$
		\end{itemize}
	\end{remark}

	\begin{definition}\label{definition2.6}{\rm(see \cite{L20142})}
		For any $f\in L_{loc}^1(\mathcal{X})$ and $x\in \mathcal{X}$, the Hardy-Littlewood maximal function is defined as follows. $$M(f)(x):=\sup\limits_{B\ni x}\frac{1}{\mu(B)}\int_{B}|f(y)|\d{\mu(y)},$$
		where the supremum is taken over all balls $B$ in $\mathcal{X}$ containing $x$.
	\end{definition}
	
	\begin{lemma}\label{Remark 2.3}{\rm(see \cite{L20142})}
		$M$ is bounded on $L^{p,\infty}(\mathcal{X})$ for all $p\in (1,\infty)$.
	\end{lemma}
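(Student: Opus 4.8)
The plan is to derive the weak-type $L^{p,\infty}$ bound from the endpoint behavior of $M$ by a Kolmogorov-type truncation, rather than from abstract interpolation. On a space of homogeneous type the doubling condition \eqref{1.1} yields a Vitali-type covering lemma, which in turn gives the weak type $(1,1)$ estimate $\mu(\{x:Mf(x)>\lambda\})\leq C\lambda^{-1}\|f\|_{L^1(\mathcal{X})}$; together with the trivial bound $\|Mf\|_{L^\infty(\mathcal{X})}\leq \|f\|_{L^\infty(\mathcal{X})}$, these are the only two inputs I will need. Since $M$ is sublinear, any splitting $f=f_1+f_2$ gives $Mf\leq Mf_1+Mf_2$ pointwise, so $\{Mf>\lambda\}\subset\{Mf_1>\lambda/2\}\cup\{Mf_2>\lambda/2\}$, and the task reduces to estimating the two pieces separately.

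Fix $f\in L^{p,\infty}(\mathcal{X})$, write $A:=\|f\|_{L^{p,\infty}(\mathcal{X})}$, and fix a level $\lambda>0$. I would truncate $f$ at height $\lambda/2$, setting $f_1:=f\chi_{\{|f|>\lambda/2\}}$ and $f_2:=f-f_1$. The good part obeys $\|f_2\|_{L^\infty(\mathcal{X})}\leq \lambda/2$, hence $Mf_2\leq \lambda/2$ everywhere and $\{Mf_2>\lambda/2\}$ is empty. For the bad part, the layer-cake formula together with the defining estimate $\mu(\{|f|>s\})\leq (A/s)^p$ of the weak space gives
\begin{align*}
\|f_1\|_{L^1(\mathcal{X})} &= \int_0^\infty \mu\bigl(\{|f|>\max(s,\lambda/2)\}\bigr)\,\d s \leq \frac{\lambda}{2}\Bigl(\frac{2A}{\lambda}\Bigr)^p + A^p\int_{\lambda/2}^\infty s^{-p}\,\d s \\
&= \frac{p}{p-1}A^p\Bigl(\frac{\lambda}{2}\Bigr)^{1-p},
\end{align*}
so that $f_1\in L^1(\mathcal{X})$; the convergence of the tail integral is exactly where the hypothesis $p>1$ is used.

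Feeding these into the two endpoints, the empty set removes the $f_2$-contribution, while the weak type $(1,1)$ inequality applied to $f_1$ gives $\mu(\{Mf_1>\lambda/2\})\leq \frac{2C}{\lambda}\|f_1\|_{L^1(\mathcal{X})}\lesssim A^p\lambda^{-p}$. Hence $\mu(\{Mf>\lambda\})\leq \mu(\{Mf_1>\lambda/2\})\lesssim A^p\lambda^{-p}$, so that $\lambda^p\mu(\{Mf>\lambda\})\lesssim A^p$ with constant independent of $\lambda$; taking the supremum over $\lambda>0$ yields $\|Mf\|_{L^{p,\infty}(\mathcal{X})}\lesssim \|f\|_{L^{p,\infty}(\mathcal{X})}$, which is the assertion.

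The only genuinely nontrivial ingredient is the weak type $(1,1)$ bound for $M$ on the RD-space; everything else is the standard real-variable truncation. That endpoint estimate is where the geometry of $\mathcal{X}$ enters, through the doubling property \eqref{1.1} and the associated covering lemma, and it is valid on any space of homogeneous type, in particular on our RD-space. An alternative route, which I would only mention rather than pursue, is to invoke the general Marcinkiewicz interpolation theorem for Lorentz spaces with endpoints $(1,1)$ and $(\infty,\infty)$, which directly produces $M:L^{p,s}(\mathcal{X})\to L^{p,s}(\mathcal{X})$ for every $0<s\leq\infty$; the case $s=\infty$ is precisely the stated lemma.
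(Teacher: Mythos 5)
Your proof is correct. The paper does not prove this lemma at all --- it simply cites it from Grafakos--Liu--Maldonado--Yang \cite{L20142} --- and your truncation argument (split $f$ at height $\lambda/2$, kill the bounded part by $\|Mf_2\|_{L^\infty}\le\lambda/2$, and feed the integrable part into the weak type $(1,1)$ inequality, whose tail integral converges precisely because $p>1$) is the standard and complete way to establish it; the only geometric input, the weak $(1,1)$ bound for the uncentered maximal function, does indeed follow from the doubling condition \eqref{1.1} alone via the Vitali covering lemma, so nothing beyond the space-of-homogeneous-type structure is needed.
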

	
	\begin{definition}\label{definition2.7}{\rm(see \cite{L20142})}
		Let $\vec{f}:=(f_1,...,f_m)$ with every $f_j \in L_{loc}^1(\mathcal{X})$. The maximal operator $\mathcal{M}$ is defined by setting, for any $x\in \mathcal{X}$,
		$$\mathcal{M}(\vec{f})(x):=\sup\limits_{B\ni x}\prod\limits_{j=1}^m\frac{1}{\mu(B)}\int_{B}|f_j(y_j)|\d{\mu(y_j)},$$
		where the supremum is taken over all balls $B$ contained in $\mathcal{X}$ containing $x$.
	\end{definition}

	\begin{remark}\label{Remark 2.4}
		By the Definitions \ref{definition2.6} and \ref{definition2.7}, we can easily get that $$\mathcal{M}(\vec{f})(x) \leq \prod\limits_{j=1}^mM(f_j)(x).$$
	\end{remark}
	
	\begin{definition}\label{definition2.8}
		Let $f\in L_{loc}^1(\mathcal{X})$, the sharp  maximal function is defined as follows
		$$M^{\sharp}(f)(x):= \sup\limits_{B\ni x} \frac{1}{\mu(B)}\int_{B}|f(y)-f_B|\d {\mu(y)},$$
		where the supremum is taken over all balls $B$ contained in $\mathcal{X}$ containing $x$.
	\end{definition}
	
	In addition, for any $\delta \in (0,\infty )$, in this article we always use the following notation to simplify the representation of operators $$M_{\delta}(f)(x):=\left[M\left(\left|f\right|^{\delta}\right)(x)\right]^{1/\delta} \quad\text{and}\quad M_{\delta}^{\sharp}(f)(x):=\left[M^{\sharp}\left(\left|f\right|^{\delta}\right)(x)\right]^{1/\delta}.$$
	The same  for the maximal operator $\mathcal{M}$ with the following notation $$\mathcal{M}_{\delta}(\vec{f})(x):=\left[\mathcal{M}\left(|\vec{f} |^{\delta}\right)(x)\right]^{1/\delta}=\left(\sup\limits_{B\ni x}\prod\limits_{j=1}^m\frac{1}{\mu(B)}\int_{B}|f_j(y_j)|^{\delta}\d{\mu(y_j)}\right)^{1/\delta},$$
	 where $|\vec{f}|^{\delta}:=\left(|f_1|^{\delta},...,|f_m|^{\delta}\right)$.
	\begin{remark}{\rm(see \cite{W2024})}
		For a given $\delta \in (0,+\infty)$, it is easy to derive that $(a+b)^{\delta} \thicksim  a^{\delta} + b^{\delta}$ holds for any $a > 0$ and $b > 0$. Consequently, the following relation can be obtained:$$M_{\delta}^{\sharp}(f)(x) \thicksim  \sup\limits_{B\ni x}\inf\limits_{a\in \mathbb{C}}\left(\frac{1}{\mu(B)}\int_{B}\left||f(y)|^{\delta}-|a|^{\delta}\right|\d {\mu(y)}\right)^{1/\delta}.$$
		
	\end{remark}
	
	\begin{lemma}\label{Lemma2.1}{\rm(see \cite{W2024})}
		Let $(\mathcal{X},d,\mu)$ be a metric measure space and $0<p<r<\infty $.  There is a positive constant $C$ such that, for any measurable function $f$ and $B\subset \mathcal{X}$,$$\left[\mu(B)\right]^{-1/p}\left\|f\right\|_{L^p(B)}\leq C \left[\mu(B)\right]^{-1/r}\left\|f\right\|_{L^{r,\infty}(B)}.$$
	\end{lemma}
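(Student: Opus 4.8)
The plan is to run the standard Kolmogorov / layer-cake argument relating the strong $L^p$ norm to the weak $L^r$ norm on a set of finite measure, keeping the dependence on $\mu(B)$ explicit; notably this uses nothing about $(\mathcal{X},d,\mu)$ beyond the finiteness of $\mu(B)$. Write $A:=\|f\|_{L^{r,\infty}(B)}$ and let $d_f(\lambda):=\mu(\{x\in B:|f(x)|>\lambda\})$ denote the distribution function of $f$ restricted to $B$. First I would record the two available bounds on $d_f$: the trivial one $d_f(\lambda)\le\mu(B)$, valid for every $\lambda>0$ since the superlevel set is contained in $B$, and the weak-type bound $d_f(\lambda)\le A^r\lambda^{-r}$, which is exactly the definition of the $L^{r,\infty}(B)$ norm.

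Next I would express the $L^p$ norm through the distribution function via the identity
\[
\|f\|_{L^p(B)}^p=p\int_0^\infty \lambda^{p-1}d_f(\lambda)\,\d\lambda,
\]
and split the integral at a free cutoff level $\lambda_0>0$. On the range $0<\lambda\le\lambda_0$ I would insert the bound $d_f(\lambda)\le\mu(B)$, producing the term $\mu(B)\lambda_0^p$; on the range $\lambda>\lambda_0$ I would insert $d_f(\lambda)\le A^r\lambda^{-r}$, producing $\frac{p}{r-p}A^r\lambda_0^{p-r}$. Here the hypothesis $p<r$ is essential: it is precisely what makes the exponent $p-1-r$ strictly less than $-1$, so that the tail integral $\int_{\lambda_0}^\infty\lambda^{p-1-r}\,\d\lambda$ converges.

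Finally I would optimize over the cutoff. Choosing $\lambda_0:=A\,\mu(B)^{-1/r}$ balances the two contributions, each becoming a constant multiple of $A^p\mu(B)^{1-p/r}$, and summing them yields
\[
\|f\|_{L^p(B)}^p\le\frac{r}{r-p}\,A^p\,\mu(B)^{1-p/r}.
\]
Taking $p$-th roots and moving the power of $\mu(B)$ to the left gives the claimed inequality with the explicit constant $C=\left(\frac{r}{r-p}\right)^{1/p}$, which depends only on $p$ and $r$. The computation is entirely routine; the only points requiring care are the convergence of the tail integral (guaranteed by $p<r$) and the verification that the chosen $\lambda_0$ indeed makes both terms carry the same power of $\mu(B)$, so that they combine into the single factor $\mu(B)^{1-p/r}$.
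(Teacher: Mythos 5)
Your proof is correct: the layer-cake identity, the two bounds on the distribution function, the splitting at $\lambda_0$, and the optimization $\lambda_0=A\,\mu(B)^{-1/r}$ all check out, yielding the constant $\left(\frac{r}{r-p}\right)^{1/p}$ (the degenerate cases $A=0$ or $A=\infty$ are trivial). The paper itself gives no proof — it cites the lemma from Li and Wu \cite{W2024} — and your argument is exactly the standard Kolmogorov inequality computation that underlies that reference.
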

	
	\begin{lemma}\label{Lemma2.2}{\rm(see \cite{L20142})}
		Let $\vec{w}:=(w_1,...,w_m)$, $\nu_{\vec{w}}:=\prod\limits_{j=1}^mw_j^{p/p_j}$,  and $1\leq p_1,...,p_m<\infty$ with $1/p=1/p_1+\cdots+1/p_m$. Then $\vec{w} \in A_{\vec{p}}$ if and only if $\nu_{\vec{w}} \in A_{mp}$ and $w_j^{1-p_j'} \in A_{mp_j'}$ for all $j\in \{1,...,m\}$. When $p_j=1$, the condition $w_j^{1-p_j'} \in A_{mp_j'}$ is understood as $w_j^{1/m} \in A_1$.
	\end{lemma}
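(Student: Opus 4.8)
The plan is to follow the classical argument of Lerner--Ombrosi--Pérez--Torres--Trujillo-González for multiple weights, observing that it rests only on Hölder's inequality applied to averages over a single fixed ball $B$; consequently it transfers verbatim to the \text{\rm RD}-space setting, and no feature of $\mathcal{X}$ beyond the existence of the measure $\mu$ and of the balls enters the proof. Throughout I abbreviate $\sigma_j:=w_j^{1-p_j'}$ (for $p_j>1$) and record the conjugate relations $(mp)'-1=\tfrac{1}{mp-1}$ and $(mp_j')'-1=\tfrac{1}{mp_j'-1}$.

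For the forward direction I would first show $\vec{w}\in A_{\vec{p}}\Rightarrow\nu_{\vec{w}}\in A_{mp}$. Writing $\nu_{\vec{w}}^{1-(mp)'}=\prod_{j=1}^m w_j^{-p/(p_j(mp-1))}=\prod_{j=1}^m\sigma_j^{\gamma_j}$ with $\gamma_j=\tfrac{p(p_j-1)}{p_j(mp-1)}$, the key bookkeeping identity is $\sum_{j=1}^m\gamma_j=1$, so Hölder's inequality with exponents $1/\gamma_j$ yields
\[
\frac{1}{\mu(B)}\int_B\nu_{\vec{w}}^{1-(mp)'}\,\d\mu\le\prod_{j=1}^m\Big(\frac{1}{\mu(B)}\int_B\sigma_j\,\d\mu\Big)^{\gamma_j}.
\]
Raising to the power $mp-1$ turns $\gamma_j(mp-1)$ into $p/p_j'$, and multiplying by $\tfrac{1}{\mu(B)}\int_B\nu_{\vec{w}}$ exhibits the right-hand side as the $p$-th power of the per-ball $A_{\vec{p}}$ expression, giving $[\nu_{\vec{w}}]_{A_{mp}}\le[\vec{w}]_{A_{\vec{p}}}^{\,p}$. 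The claim $\sigma_j\in A_{mp_j'}$ is the symmetric computation: factor $\sigma_j^{1-(mp_j')'}=\nu_{\vec{w}}^{\,a}\prod_{i\ne j}\sigma_i^{\,b_i}$ with $a=\tfrac{p_j'}{p(mp_j'-1)}$ and $b_i=\tfrac{p_j'}{(mp_j'-1)p_i'}$, check $a+\sum_{i\ne j}b_i=1$, apply Hölder, and raise to the power $mp_j'-1$ to recognise $[\vec{w}]_{A_{\vec{p}}}^{\,p_j'}$.

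For the converse, assume $\nu_{\vec{w}}\in A_{mp}$ and $\sigma_j\in A_{mp_j'}$ for all $j$. Setting $\eta:=\nu_{\vec{w}}^{1-(mp)'}$ and $\tau_j:=\sigma_j^{1-(mp_j')'}$, I would raise the $A_{mp}$ condition to the power $1/p$, the $j$-th $A_{mp_j'}$ condition to the power $1/p_j'$, and multiply: the ``forward'' factors assemble into the per-ball $A_{\vec{p}}$ expression, while the ``reciprocal'' factors assemble into
\[
R:=\Big(\frac1{\mu(B)}\int_B\eta\,\d\mu\Big)^{\frac{mp-1}{p}}\prod_{j=1}^m\Big(\frac1{\mu(B)}\int_B\tau_j\,\d\mu\Big)^{\frac{mp_j'-1}{p_j'}},
\]
so that $[\vec{w}]_{A_{\vec{p}}}\cdot R\le[\nu_{\vec{w}}]_{A_{mp}}^{1/p}\prod_j[\sigma_j]_{A_{mp_j'}}^{1/p_j'}<\infty$ and it suffices to prove $R\ge 1$. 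This is the crux. I would choose $\lambda_0=\tfrac{mp-1}{m^2p}$ and $\lambda_j=\tfrac{mp_j'-1}{m^2p_j'}$, verify the two identities $\lambda_0+\sum_j\lambda_j=1$ and $\eta^{\lambda_0}\prod_j\tau_j^{\lambda_j}\equiv 1$ (the exponent of each $w_i$ cancels to $0$), and apply Hölder to $1=\tfrac1{\mu(B)}\int_B\eta^{\lambda_0}\prod_j\tau_j^{\lambda_j}\,\d\mu$ to obtain $1\le\big(\tfrac1{\mu(B)}\int_B\eta\big)^{\lambda_0}\prod_j\big(\tfrac1{\mu(B)}\int_B\tau_j\big)^{\lambda_j}$; since $R$ is exactly the $m^2$-th power of this product, $R\ge 1$ follows.

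The degenerate cases $p_j=1$ are handled separately, interpreting the $\sigma_j$-average as $(\inf_B w_j)^{-1}$ and the condition $\sigma_j\in A_{mp_j'}$ as $w_j^{1/m}\in A_1$; the same bookkeeping goes through with the relevant factors replaced by infima. I expect the main obstacle to be precisely the converse, namely closing the exponent bookkeeping---verifying $\sum_j\gamma_j=1$, $a+\sum_{i\ne j}b_i=1$, and above all that $\eta^{\lambda_0}\prod_j\tau_j^{\lambda_j}$ is identically constant while $\lambda_0+\sum_j\lambda_j=1$---since these algebraic identities among $p,p_j,p_j'$ are exactly what legitimise each use of Hölder and force $R\ge 1$.
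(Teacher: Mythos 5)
The paper gives no proof of this lemma at all --- it is quoted from \cite{L20142} --- and your argument is exactly the standard Lerner--Ombrosi--P\'erez--Torres--Trujillo-Gonz\'alez H\"older-bookkeeping proof that the cited source adapts to metric measure spaces; I checked all the exponent identities ($\sum_j\gamma_j=1$ with $\gamma_j(mp-1)=p/p_j'$, $a+\sum_{i\ne j}b_i=1$ with $a(mp_j'-1)=p_j'/p$, $\lambda_0+\sum_j\lambda_j=1$, and $\eta^{\lambda_0}\prod_j\tau_j^{\lambda_j}\equiv 1$ since the exponent of each $w_i$ is $-\tfrac{1}{m^2p_i}+\tfrac{1}{m^2p_i}=0$), and they all hold, so the proof is correct and, as you observe, uses nothing about $\mathcal{X}$ beyond averages over a fixed ball. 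The only genuinely schematic part is the degenerate case $p_j=1$, where the dual-weight average must be replaced by $(\inf_B w_j)^{-1}$ and the hypothesis by $w_j^{1/m}\in A_1$, but that modification is routine.
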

	
	\begin{lemma}\label{Lemma2.3}{\rm(see \cite{L20142})}
		Let $1 < p_1, ..., p_m < \infty $ with $1/p = 1/p_1+\cdots+ 1/p_m$. Then $\vec{w}:= (w_1,...,w_m) \in A_{\vec{p}}$ if and only if there exists a positive constant $C$ such that, for all $\vec{f}:=(f_1,...,f_m)$ with each $f_j \in L^{p_j}(\mathcal{X}, w_j)$ and $\nu_{\vec{w}}:=\prod\limits_{j=1}^mw_j^{p/p_j}$, one has, $$\left\|\mathcal{M}(\vec{f})\right\|_{L^p(\mathcal{X},\nu_{\vec{w}})}\leq C\prod\limits_{j=1}^m\left\|f_j\right\|_{L^{p_j}(\mathcal{X},w_j)}.$$ 
	\end{lemma}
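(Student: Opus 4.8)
I would prove the two implications separately.

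For necessity, assume the stated boundedness and write $\sigma_j:=w_j^{1-p_j'}$ (all $p_j>1$ here). I would test the inequality on the tuple $\vec f=(\sigma_1\chi_B,\ldots,\sigma_m\chi_B)$ for an arbitrary ball $B$, first restricting to balls on which each $\sigma_j$ is integrable and removing this restriction afterwards by a routine truncation. Since $\sigma_j^{p_j}w_j=w_j^{(1-p_j')p_j+1}=w_j^{1-p_j'}=\sigma_j$, the right-hand side equals $\prod_j\sigma_j(B)^{1/p_j}$. For every $x\in B$ one has $\mathcal M(\vec f)(x)\ge\prod_j\mu(B)^{-1}\sigma_j(B)$, so the left-hand side is at least $\nu_{\vec w}(B)^{1/p}\prod_j\mu(B)^{-1}\sigma_j(B)$. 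Combining these and using $\sum_j 1/p_j'=m-1/p$ to collapse the powers of $\mu(B)$ to $\mu(B)^{-m}$ reproduces exactly the $A_{\vec p}$ condition.

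For sufficiency, the plan is the classical two-step scheme (weak-type endpoint plus interpolation) adapted to the homogeneous structure of $\mathcal X$. First I would establish the weak-type bound
\[
\|\mathcal M(\vec f)\|_{L^{p,\infty}(\mathcal X,\nu_{\vec w})}\le C\prod_{j=1}^m\|f_j\|_{L^{p_j}(\mathcal X,w_j)}
\]
for every $\vec w\in A_{\vec p}$. Fixing $\lambda>0$ and setting $\Omega_\lambda:=\{x:\mathcal M(\vec f)(x)>\lambda\}$, each $x\in\Omega_\lambda$ lies in a ball $B_x$ with $\prod_j\mu(B_x)^{-1}\int_{B_x}|f_j|>\lambda$; a Vitali-type covering lemma on the space of homogeneous type $\mathcal X$ extracts a disjoint subfamily $\{B_k\}$ whose fixed dilates cover $\Omega_\lambda$, and the doubling property together with $\nu_{\vec w}\in A_\infty$ gives $\nu_{\vec w}(\Omega_\lambda)\lesssim\sum_k\nu_{\vec w}(B_k)$. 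The decisive algebraic step rewrites the $A_{\vec p}$ condition as $\nu_{\vec w}(B_k)^{1/p}\le C\mu(B_k)^m\prod_j\sigma_j(B_k)^{-1/p_j'}$ and estimates each average $\mu(B_k)^{-1}\int_{B_k}|f_j|$ by Hölder's inequality against $w_j^{1/p_j}w_j^{-1/p_j}$, producing a factor $\sigma_j(B_k)^{1/p_j'}$ that cancels precisely the negative power above. Raising to the power $p$ yields $\nu_{\vec w}(B_k)\le C\lambda^{-p}\prod_j(\int_{B_k}|f_j|^{p_j}w_j)^{p/p_j}$; summing over the disjoint $B_k$ and applying the discrete Hölder inequality with exponents $p/p_j$ (which sum to $1$ because $\sum_j 1/p_j=1/p$, and are admissible since $p\le p_j$) produces the right-hand side $\prod_j\|f_j\|_{L^{p_j}(w_j)}^p$.

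To pass from weak to strong type I would exploit the openness of the class $A_{\vec p}$: since Lemma \ref{Lemma2.2} gives $\sigma_j\in A_{mp_j'}$ and $\nu_{\vec w}\in A_{mp}$, the reverse Hölder inequality valid for $A_\infty$ weights on spaces of homogeneous type lets me choose exponents $1<\tilde p_j<p_j$ with $\vec w\in A_{\vec{\tilde p}}$, while trivially $\vec w\in A_{\vec P}$ for any $P_j\ge p_j$. Applying the weak-type bound at the tuples $\vec{\tilde p}$ and $\vec P$ and interpolating by the multilinear Marcinkiewicz theorem (with measures $w_j\,d\mu$ on each factor and $\nu_{\vec w}\,d\mu$ on the target) gives the strong-type estimate at the intermediate tuple $\vec p$. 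I expect the main obstacle to lie in this passage from weak to strong type, since it relies on two pieces of $\mathcal X$-specific machinery — the self-improvement (openness) of $A_{\vec p}$ via a reverse Hölder inequality, and a multilinear Marcinkiewicz interpolation theorem on RD-spaces — whereas the weak-type inequality is comparatively robust, its only nontrivial ingredients being the covering lemma and the exact cancellation of the $\sigma_j(B_k)$ factors furnished by the $A_{\vec p}$ condition.
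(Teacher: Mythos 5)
The paper does not prove this lemma: it is quoted verbatim from \cite{L20142} (Grafakos--Liu--Maldonado--Yang), so there is no in-paper argument to compare against. Measured against the proof in that source, your necessity argument (testing on $(\sigma_1\chi_B,\dots,\sigma_m\chi_B)$ with $\sigma_j=w_j^{1-p_j'}$) and your weak-type argument (Vitali covering plus the cancellation $\sigma_j(B_k)^{1/p_j'}\cdot\sigma_j(B_k)^{-1/p_j'}$) are correct and essentially the standard ones; the exponent bookkeeping you describe checks out.

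The gap is in the passage from weak to strong type. The target weight $\nu_{\vec w}=\prod_j w_j^{p/p_j}$ is not intrinsic to $\vec w$: it depends on the exponent tuple. The weak-type estimate your first step produces for a tuple $\vec{\tilde p}$ lands in $L^{\tilde p,\infty}\bigl(\mathcal X,\prod_j w_j^{\tilde p/\tilde p_j}\bigr)$, and for $\vec P$ in $L^{P,\infty}\bigl(\mathcal X,\prod_j w_j^{P/P_j}\bigr)$. For generic choices $\tilde p_j<p_j$ and ``any $P_j\ge p_j$'' these are two \emph{different} measures, and also different from $\nu_{\vec w}$, so the multilinear Marcinkiewicz theorem ``with $\nu_{\vec w}\,d\mu$ on the target'' simply does not have the hypotheses you claim for it; interpolation of weak-type bounds with a change of target measure is not available off the shelf. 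The repair is to exploit scale invariance: the openness property actually yields $\vec w\in A_{\vec p/r}$ for some $r>1$, and since $(p/r)/(p_j/r)=p/p_j$ the associated target weight at the tuple $\vec p/r$ is again $\nu_{\vec w}$; pairing this endpoint with the trivial $L^\infty\times\cdots\times L^\infty\to L^\infty$ bound (or with the tuple $s\vec p$, whose target weight is also $\nu_{\vec w}$) puts both endpoint estimates over the same measures, after which Marcinkiewicz interpolation along the diagonal gives the strong type at $\vec p$. Note also that the cited source avoids interpolation altogether: it proves the strong type directly by decomposing the level sets of the (dyadic) maximal function into maximal cubes, using $A_{\vec p}$ to write $2^{kp}\nu_{\vec w}(Q)\lesssim\prod_j\bigl[\bigl(\tfrac{1}{\sigma_j(Q)}\int_Q|f_j|\sigma_j^{-1}\sigma_j\bigr)^{p_j}\sigma_j(Q)\bigr]^{p/p_j}$, and concluding via the $A_\infty$ property of the $\sigma_j$ (sparseness) and the boundedness of $M_{\sigma_j}$ on $L^{p_j}(\sigma_j)$ --- an argument that needs $p_j>1$, exactly as in the hypothesis.
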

	
	\begin{lemma}\label{Lemma2.4}{\rm(see \cite{L20142})}
		Let $1 \leq p_1, ..., p_m < \infty $ with $1/p = 1/p_1+\cdots+ 1/p_m$. Then $\vec{w}:= (w_1,...,w_m) \in A_{\vec{p}}$  if and only if there exists a positive constant $C$ such that,  for all $\vec{f}:=(f_1,...,f_m)$ with each $f_j \in L^{p_j}(\mathcal{X},w_j)$ and $\nu_{\vec{w}}:=\prod\limits_{j=1}^mw_j^{p/p_j}$, $$\left\|\mathcal{M}(\vec{f})\right\|_{L^{p,\infty}(\mathcal{X},\nu_{\vec{w}})}\leq C\prod\limits_{j=1}^m\left\|f_j\right\|_{L^{p_j}(\mathcal{X},w_j)}.$$ 
	\end{lemma}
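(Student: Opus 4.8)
The statement is an equivalence, so the plan is to prove the two implications separately, with necessity being the softer of the two (it is the weak-type/endpoint analogue of the strong-type characterization in Lemma \ref{Lemma2.3}, now allowing some $p_j=1$). For necessity I would assume the weak-type estimate and recover the $A_{\vec p}$ condition by testing on carefully chosen functions. Fix a ball $B$ and, when all $p_j>1$, set $f_j:=w_j^{1-p_j'}\chi_B$. For every $x\in B$ one has $\mathcal M(\vec f)(x)\geq\prod_{j=1}^m\frac{1}{\mu(B)}\int_B w_j^{1-p_j'}\,\d\mu$, so taking $\lambda$ to be a fixed fraction of this product places $B$ inside the level set $\{\mathcal M(\vec f)>\lambda\}$, whence $\nu_{\vec w}(B)\le\nu_{\vec w}(\{\mathcal M(\vec f)>\lambda\})$. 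Feeding this into the weak-type inequality and using the elementary identity $\|f_j\|_{L^{p_j}(\mathcal X,w_j)}^{p_j}=\int_B w_j^{1-p_j'}\,\d\mu$ (which follows from $(1-p_j')p_j+1=1-p_j'$), a direct rearrangement of the powers of $\mu(B)$ yields exactly the $A_{\vec p}$ quotient of Definition \ref{definition2.2}, bounded by a multiple of $C$. The endpoint indices $p_j=1$ must be treated separately, since there the factor $(\frac{1}{\mu(B)}\int_B w_j^{1-p_j'})^{1/p_j'}$ is read as $(\inf_B w_j)^{-1}$; for these I would test instead on $f_j=\chi_{B\cap\{w_j<\inf_B w_j+\epsilon\}}$ and let $\epsilon\to0$.

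For sufficiency I would run a Vitali/Calderón--Zygmund covering argument, which is available because $(\mathcal X,d,\mu)$ is a space of homogeneous type. Fix $\lambda>0$ and for each $x$ in $E_\lambda:=\{\mathcal M(\vec f)>\lambda\}$ choose a ball $B_x\ni x$ with $\prod_{j=1}^m\frac{1}{\mu(B_x)}\int_{B_x}|f_j|\,\d\mu>\lambda$. Extracting a pairwise disjoint subfamily $\{B_i\}$ whose fixed dilates cover $E_\lambda$, and using that $\nu_{\vec w}\in A_{mp}\subset A_\infty$ is doubling by Lemma \ref{Lemma2.2}, I obtain $\nu_{\vec w}(E_\lambda)\lesssim\sum_i\nu_{\vec w}(B_i)$. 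The crux is the per-ball bound: applying Hölder on each $B_i$ to split $\int_{B_i}|f_j|=\int_{B_i}|f_j|w_j^{1/p_j}w_j^{-1/p_j}$, raising to the power $p$, multiplying by $\nu_{\vec w}(B_i)$, and invoking the $A_{\vec p}$ condition to absorb the factor $\nu_{\vec w}(B_i)\prod_j(\int_{B_i}w_j^{1-p_j'}\,\d\mu)^{p/p_j'}$, all powers of $\mu(B_i)$ cancel, leaving the clean estimate $\lambda^p\nu_{\vec w}(B_i)\lesssim\prod_{j=1}^m(\int_{B_i}|f_j|^{p_j}w_j\,\d\mu)^{p/p_j}$.

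Summing over $i$ then finishes the argument. Since $\sum_{j=1}^m p/p_j=1$, the discrete Hölder inequality gives $\sum_i\prod_j(\int_{B_i}|f_j|^{p_j}w_j)^{p/p_j}\le\prod_j(\sum_i\int_{B_i}|f_j|^{p_j}w_j)^{p/p_j}$, and disjointness of the $B_i$ bounds each inner sum by $\|f_j\|_{L^{p_j}(\mathcal X,w_j)}^{p_j}$. Combining with the covering estimate yields $\lambda^p\nu_{\vec w}(E_\lambda)\lesssim\prod_{j=1}^m\|f_j\|_{L^{p_j}(\mathcal X,w_j)}^p$, which is precisely the claimed weak-type bound after taking the supremum over $\lambda$.

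I expect the main obstacle to lie in the geometric covering step and the endpoint bookkeeping rather than in the algebra. Concretely, I must ensure that the Vitali-type selection and the passage $\nu_{\vec w}(E_\lambda)\lesssim\sum_i\nu_{\vec w}(B_i)$ go through quantitatively on the RD-space, using the doubling property \eqref{1.1} to control $\nu_{\vec w}$ on dilates of the selected balls, and that the degenerate indices $p_j=1$ are handled consistently in both directions through the $A_1$/infimum reading of Definition \ref{definition2.2}. Once these geometric and endpoint points are settled, the $A_{\vec p}$ condition together with Hölder's inequality carries out the remaining work.
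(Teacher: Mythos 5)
This lemma is imported verbatim from \cite{L20142} and the paper gives no proof of it, so there is no internal argument to compare against. Your outline is the standard two-sided argument (the weak-type analogue of Lemma \ref{Lemma2.3}, originally due to Lerner--Ombrosi--P\'erez--Torres--Trujillo-Gonz\'alez and transplanted to homogeneous spaces in \cite{L20142}), and both directions are correct as sketched: the exponent bookkeeping in the necessity test and the per-ball cancellation in the sufficiency step check out, including the $p_j=1$ endpoints via the infimum reading of Definition \ref{definition2.2}. The only refinements you should make explicit are (a) in the necessity direction, testing with the truncations $f_j=w_j^{1-p_j'}\chi_{B\cap\{w_j>1/N\}}$ and letting $N\to\infty$, since a priori $w_j^{1-p_j'}\chi_B$ need not lie in $L^{p_j}(\mathcal{X},w_j)$, and (b) in the covering step, passing first to a compact (or bounded) subset of $E_\lambda$ so that the basic $5r$-covering lemma applies with uniformly bounded radii, then taking a supremum using inner regularity of $\nu_{\vec w}\,\d\mu$; with these in place the argument is complete.
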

	
		\begin{lemma}\label{Lemma2.5}{\rm(see \cite{L20142})}
		Let $0<p_0\leq  p<\infty$ and $w\in A_{\infty}$. Then, there exists a positive constant $C$ such that, for all $f\in L_{loc}^1(\mathcal{X})$ satisfying $M(f) \in L^{p_0,\infty}(w)$, we have \begin{itemize}
			\item [\rm(i)] If $p_0<p$, then $$\left\|M(f)\right\|_{L^{p}(\mathcal{X},w)} \leq C\left\|M^{\sharp}(f)\right\|_{L^{p}(\mathcal{X},w)}.$$
			\item[\rm (ii)] If $p_0 \leq p$, then $$\left\|M(f)\right\|_{L^{p,\infty}(\mathcal{X},w)} \leq C\left\|M^{\sharp}(f)\right\|_{L^{p,\infty}(\mathcal{X},w)}.$$
		\end{itemize}
	\end{lemma}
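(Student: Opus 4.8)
The plan is to prove both parts by the Fefferman--Stein good-$\lambda$ method, adapted to the RD-space geometry, deriving both inequalities from a single distributional estimate.

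\textbf{Step 1 (good-$\lambda$ inequality).} First I would show that there exist $\theta>0$ and $C>0$, depending only on the $A_\infty$ constant of $w$ and the structural constants of $\mathcal{X}$, such that for every $\lambda>0$ and all sufficiently small $\gamma>0$,
$$w\big(\{x:M(f)(x)>2\lambda,\ M^{\sharp}(f)(x)\le\gamma\lambda\}\big)\le C\gamma^{\theta}\,w\big(\{x:M(f)(x)>\lambda\}\big).$$
To do so, decompose the open set $\{M(f)>\lambda\}$ via the Calder\'on--Zygmund / Whitney decomposition on RD-spaces (available by \cite{L20142}) into a family of balls. On each such ball a localization argument gives the Lebesgue-measure bound $\mu(\{x\in B:M(f)>2\lambda,\ M^{\sharp}(f)\le\gamma\lambda\})\le C\gamma\,\mu(B)$, and then the $A_\infty$ property converts this into a weighted bound: if $E\subset B$ with $\mu(E)\le\gamma\mu(B)$, then $w(E)\le C\gamma^{\theta}w(B)$. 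Summing over the decomposition yields the claim.

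\textbf{Step 2 (passage to the norm estimates).} For part (i) I would integrate against $p\lambda^{p-1}$ using the layer-cake formula $\|M(f)\|_{L^p(w)}^p=p\int_0^\infty\lambda^{p-1}w(\{M(f)>\lambda\})\,d\lambda$ together with the splitting $w(\{M(f)>2\lambda\})\le w(\{M(f)>2\lambda,\ M^{\sharp}(f)\le\gamma\lambda\})+w(\{M^{\sharp}(f)>\gamma\lambda\})$. This produces $\|M(f)\|_{L^p(w)}^p\le C\gamma^{\theta}\|M(f)\|_{L^p(w)}^p+C\gamma^{-p}\|M^{\sharp}(f)\|_{L^p(w)}^p$; choosing $\gamma$ with $C\gamma^{\theta}<1/2$ and absorbing gives (i). For part (ii) the same good-$\lambda$ inequality is used, but now I multiply by $(2\lambda)^p$, take the supremum over $\lambda>0$, and absorb the resulting $A_\infty$ term into $\|M(f)\|_{L^{p,\infty}(w)}^p$ on the left.

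\textbf{Main obstacle.} The delicate point---and where the hypothesis $M(f)\in L^{p_0,\infty}(w)$ is needed---is the absorption: moving $C\gamma^{\theta}\|M(f)\|_{L^p(w)}^p$ to the left is legitimate only once this quantity is known to be finite. I would handle this by truncation. For (i), replace $\int_0^\infty$ by $\int_0^N$; this truncated integral is finite because $w(\{M(f)>\lambda\})\lesssim\lambda^{-p_0}\|M(f)\|_{L^{p_0,\infty}(w)}^{p_0}$ makes $\int_0^N\lambda^{p-1-p_0}\,d\lambda$ converge exactly when $p_0<p$---this is precisely where the strict inequality is used---then perform the absorption at each $N$ and let $N\to\infty$ by monotone convergence. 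For (ii), truncate the supremum to $0<\lambda<N$ instead; the same weak-type bound shows this is finite already under $p_0\le p$, which explains the non-strict hypothesis there. A final point to verify is that the Whitney decomposition and the quantitative $A_\infty$ comparison carry the doubling and reverse-doubling constants of the RD-space correctly, both of which are provided by \cite{L20142}.
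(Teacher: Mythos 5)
The paper does not prove this statement itself; it is quoted verbatim from \cite{L20142}, and the proof there is precisely the Fefferman--Stein good-$\lambda$ argument you describe (Whitney/Calder\'on--Zygmund decomposition of $\{M(f)>\lambda\}$, the $A_\infty$ passage from $\mu(E)\le\gamma\mu(B)$ to $w(E)\le C\gamma^{\theta}w(B)$, layer-cake, and absorption after truncation). Your treatment of the absorption step is the right one and correctly isolates why the a priori hypothesis $M(f)\in L^{p_0,\infty}(w)$ forces $p_0<p$ in (i) but only $p_0\le p$ in (ii); the only cosmetic caveat is that on a general RD-space the threshold $2\lambda$ in the good-$\lambda$ inequality may need to be replaced by $b\lambda$ for some $b>1$ depending on the doubling constant, which affects nothing in the conclusion.
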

	
	\begin{lemma}\label{Lemma2.6}
		Let $0<r\leq p<\infty$ and  $w\in A_{\infty}$. For any $\delta \in (0,r)$, there exists a $C> 0 $ such that, for any $f \in L^{r,\infty}(\mathcal{X})$,
		\begin{itemize}
			\item[\rm (i)] If $p>r$, then  $$\left\|f\right\|_{L^p(\mathcal{X},w)} \leq C\left\|M_{\delta}^{\sharp}(f)\right\|_{L^p(\mathcal{X},w)}.$$
			\item[\rm (ii)] If $p\geq r$, then  $$\left\|f\right\|_{L^{p,\infty}(\mathcal{X},w)} \leq C\left\|M_{\delta}^{\sharp}(f)\right\|_{L^{p,\infty}(\mathcal{X},w)}.$$
		\end{itemize}
	
	\end{lemma}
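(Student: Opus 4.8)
The plan is to deduce both parts from the Fefferman--Stein type inequality of Lemma \ref{Lemma2.5} by the standard $\delta$-rescaling trick applied to the auxiliary function $g:=|f|^{\delta}$. Set $P:=p/\delta$ and $p_0:=r/\delta$. Since $\delta\in(0,r)$ and $0<r\le p$, these exponents satisfy $1<p_0\le P<\infty$, with $p_0<P$ precisely when $r<p$; this is exactly what will let me invoke the two cases of Lemma \ref{Lemma2.5} in the two cases of the present lemma.

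First I would record the elementary pointwise domination and the rescaling identities that convert everything into statements about $g$. By the Lebesgue differentiation theorem on the RD-space (valid since $(\mathcal{X},d,\mu)$ is of homogeneous type), one has $|f(x)|^{\delta}\le M(|f|^{\delta})(x)$ for a.e.\ $x$, hence $|f|\le M_{\delta}(f)$ pointwise a.e.; consequently $\|f\|_{L^{p}(\mathcal{X},w)}\le \|M_{\delta}(f)\|_{L^{p}(\mathcal{X},w)}$ and $\|f\|_{L^{p,\infty}(\mathcal{X},w)}\le \|M_{\delta}(f)\|_{L^{p,\infty}(\mathcal{X},w)}$, because $w\ge 0$. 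Then, unwinding the definitions $M_{\delta}(f)=[M(|f|^{\delta})]^{1/\delta}$ and $M_{\delta}^{\sharp}(f)=[M^{\sharp}(|f|^{\delta})]^{1/\delta}$ and changing variables $s=t^{\delta}$ in the weak norms, I obtain the scaling identities
\begin{align*}
\|M_{\delta}(f)\|_{L^{p}(\mathcal{X},w)}&=\|M(g)\|_{L^{P}(\mathcal{X},w)}^{1/\delta}, & \|M_{\delta}^{\sharp}(f)\|_{L^{p}(\mathcal{X},w)}&=\|M^{\sharp}(g)\|_{L^{P}(\mathcal{X},w)}^{1/\delta},
\end{align*}
together with their exact weak-type analogues in which $L^{P}$ is replaced by $L^{P,\infty}$.

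With these reductions in hand, part (i) follows by applying Lemma \ref{Lemma2.5}(i) to $g$ with the exponent $P>p_0$ (available because $p>r$), and part (ii) follows by applying Lemma \ref{Lemma2.5}(ii) to $g$ with $P\ge p_0$ (available because $p\ge r$). In each case I would combine the resulting bound $\|M(g)\|_{L^{P}(\mathcal{X},w)}\le C\|M^{\sharp}(g)\|_{L^{P}(\mathcal{X},w)}$ (resp.\ its weak version) with the domination $\|f\|\le\|M_{\delta}(f)\|$ and the scaling identities above, and finally take the $1/\delta$ power to recover the desired inequality.

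The step needing genuine care --- and the one I expect to be the main obstacle --- is the verification of the a priori hypothesis of Lemma \ref{Lemma2.5}, namely that $M(g)=M(|f|^{\delta})$ belongs to the weak space $L^{p_0,\infty}$ demanded there, since it is this finiteness that legitimizes the absorption of the maximal term in the Fefferman--Stein argument. Here the assumption $f\in L^{r,\infty}(\mathcal{X})$ enters: because $p_0=r/\delta>1$, one has $g=|f|^{\delta}\in L^{r/\delta,\infty}(\mathcal{X})$, and the boundedness of $M$ on $L^{r/\delta,\infty}(\mathcal{X})$ (Lemma \ref{Remark 2.3}) yields $M(g)\in L^{r/\delta,\infty}(\mathcal{X})$. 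The delicate point is to reconcile this unweighted membership with the precise form of the hypothesis in Lemma \ref{Lemma2.5}; I would handle it by first reducing to the case in which the right-hand side is finite (the asserted inequality being trivial otherwise) and, if necessary, by an exhaustion argument through the truncations $f_{N}:=f\,\chi_{\{|f|\le N\}\cap B(x_{0},N)}$, for which all the relevant quantities are finite, applying the above to each $f_{N}$ and passing to the limit by monotone convergence. Everything else is routine bookkeeping within the $A_{\infty}$ theory already recalled in this subsection.
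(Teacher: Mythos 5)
Your overall skeleton is the same as the paper's: pass to $g:=|f|^{\delta}$, use the pointwise bound $|f|\le M_{\delta}(f)$ a.e.\ from Lebesgue differentiation, apply Lemma \ref{Lemma2.5} to $g$ with exponent $p/\delta$, and take $\delta$-th roots. However, there is a genuine gap at exactly the step you flag as the main obstacle, and your proposed fix does not close it. Lemma \ref{Lemma2.5} requires the \emph{weighted} a priori bound $M(g)\in L^{p_0,\infty}(w)$ with $p_0=r/\delta$, whereas what you obtain from $f\in L^{r,\infty}(\mathcal{X})$ and Lemma \ref{Remark 2.3} is only the \emph{unweighted} membership $M(g)\in L^{r/\delta,\infty}(\mathcal{X})$. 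Truncating the function, $f_N:=f\chi_{\{|f|\le N\}\cap B(x_0,N)}$, does not repair this: for a general $w\in A_\infty$ (e.g.\ $w$ with rapid growth at infinity, permitted in $A_q$ for large $q$) one can have $M(|f_N|^{\delta})\notin L^{p_0,\infty}(\mathcal{X},w)$ even though $|f_N|^{\delta}$ is bounded and compactly supported, because $w(\{M(|f_N|^{\delta})>t\})$ can grow too fast as $t\to 0$. Moreover, even if the lemma could be applied to each $f_N$, the limiting argument fails on the right-hand side: the sharp maximal function does not behave monotonically under truncation ($M^{\sharp}$ of a cut-off can be much larger than $M^{\sharp}$ of the original function — think of a constant), so $\|M_{\delta}^{\sharp}(f_N)\|$ is not controlled by $\|M_{\delta}^{\sharp}(f)\|$. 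Reducing to the case where $\|M_{\delta}^{\sharp}(f)\|_{L^p(w)}<\infty$ is likewise irrelevant to the hypothesis in question.

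The paper's device is to truncate the \emph{weight} rather than the function: set $w_N:=\min\{w,N\}$, which is still in $A_\infty$ and satisfies $w_N(E)\le N\mu(E)$, so that the unweighted bound $M(g)\in L^{r/\delta,\infty}(\mathcal{X})$ immediately gives $M(g)\in L^{r/\delta,\infty}(\mathcal{X},w_N)$ and Lemma \ref{Lemma2.5} applies with weight $w_N$. One then has $\|M_{\delta}^{\sharp}(f)\|_{L^{p}(\mathcal{X},w_N)}\le\|M_{\delta}^{\sharp}(f)\|_{L^{p}(\mathcal{X},w)}$ trivially since $w_N\le w$, and the left-hand side is recovered by Fatou's lemma as $N\to\infty$. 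You should replace your exhaustion over $f_N$ by this weight truncation; the rest of your argument then goes through.
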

	\begin{proof}
		For any $N\in \mathbb{N}$, set $w_N:=\min \{w,N\}$, then $w_N\in A_{\infty}$, by Fatou's lemma we have 
		\begin{align*}	
			\left\|f\right\|_{L^p(\mathcal{X},w)}^p& = \int_{\mathcal{X}}\lim\limits_{N\rightarrow \infty}|f(x)|^pw_N(x)\d {\mu(x)}\\
			 &\leq \liminf\limits_{N\rightarrow\infty}\int_{\mathcal{X}}|f(x)|^pw_N(x)\d {\mu(x)}\\
			 &= \liminf\limits_{N\rightarrow\infty}\left\|f\right\|_{L^p(\mathcal{X},w_N)}^p.
		\end{align*}
		 We know that $f \in L^{r,\infty}(\mathcal{X})$, and hence $\left|f\right|^{\delta} \in L_{loc}^1(\mathcal{X})$ for all $\delta \in (0, r)$. Then,  by Lemma \ref{Remark 2.3} we have
		\begin{align*}
			\left\|M(|f|^{\delta})\right\|_{L^{r/\delta,\infty}(\mathcal{X},w_N)}^{1/\delta}&= \left\|M_{\delta}\left(f\right)\right\|_{L^{r,\infty}(\mathcal{X},w_N)}\\
			&\lesssim N\left\|M_{\delta}\left(f\right)\right\|_{L^{r,\infty}(\mathcal{X})}\\
			&=N \left\|M\left(|f|^{\delta}\right)\right\|_{L^{r/\delta,\infty}(\mathcal{X})}^{1/\delta}\\
			&\lesssim N\left\||f|^{\delta}\right\|_{L^{r/\delta,\infty}(\mathcal{X})}^{1/\delta}\\
			&=N\left\|f\right\|_{L^{r,\infty}(\mathcal{X})}<\infty,
		\end{align*}
		which implies that $M(|f|^{\delta}) \in L^{r/\delta,\infty}(\mathcal{X},w_N)$. Then, using Lemma \ref{Lemma2.5} on $\left|f\right|^{\delta}$, we have $$\left\|f\right\|_{L^p(\mathcal{X},w_N)}\lesssim \left\|M(|f|^{\delta})\right\|_{L^{p/\delta}(\mathcal{X},w_N)}^{1/\delta}\lesssim \left\|M^{\sharp}(|f|^{\delta})\right\|_{L^{p/\delta}(\mathcal{X},w_N)}^{1/\delta}\lesssim \left\|M_{\delta}^{\sharp}\left(f\right)\right\|_{L^{p}(\mathcal{X},w)},$$
		which proves the (i) part of the lemma by letting $N\rightarrow \infty$. Using a similar method, we can establish the conclusion in (ii), and we omit the detais here for brevity.
	\end{proof}
	\begin{lemma}\label{Lemma2.7}
		Fix  $x\in \mathcal{X}$. Suppose $f\in BMO(\mathcal{X})$, $1\le p <\infty$ and $r_1,r_2>0$, then there is a constant $C>0$ independent of $f$, $x$, $r_1$ and $r_2$ such that $$\left(\frac{1}{\mu(B(x,r_1))}\int_{B(x,r_1)}\left|f(y)-f_{B(x,r_2)}\right|^p\d {\mu(y)}\right)^{1/p}\leq C\left(1+\left|\ln \frac{r_1}{r_2}\right|\right)\|f\|_{BMO(\mathcal{X})}.$$
	\end{lemma}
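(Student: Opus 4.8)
The plan is to split, via the triangle inequality in $L^p(B(x,r_1))$, the quantity to be estimated into the local oscillation of $f$ over $B(x,r_1)$ plus the discrepancy of the two averages $|f_{B(x,r_1)}-f_{B(x,r_2)}|$. Writing $B_i:=B(x,r_i)$, Minkowski's inequality gives
\begin{align*}
\left(\frac{1}{\mu(B_1)}\int_{B_1}|f(y)-f_{B_2}|^p\,\d\mu(y)\right)^{1/p}
\leq \left(\frac{1}{\mu(B_1)}\int_{B_1}|f(y)-f_{B_1}|^p\,\d\mu(y)\right)^{1/p}+|f_{B_1}-f_{B_2}|.
\end{align*}
The first term on the right is at most $\|f\|_{BMO_p(\mathcal{X})}$, which by the John--Nirenberg equivalence recalled in Remark \ref{Remark 2.2}(i) is $\lesssim\|f\|_{BMO(\mathcal{X})}$; it accounts for the constant ``$1$'' in the asserted bound. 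Thus everything reduces to controlling $|f_{B_1}-f_{B_2}|$ by $\left(1+|\ln(r_1/r_2)|\right)\|f\|_{BMO(\mathcal{X})}$.

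The first building block I would establish is the elementary estimate that, whenever two concentric balls satisfy $B'\subset B$, one has
$$|f_{B'}-f_B|=\left|\frac{1}{\mu(B')}\int_{B'}(f-f_B)\,\d\mu\right|\leq \frac{1}{\mu(B')}\int_{B}|f-f_B|\,\d\mu\leq \frac{\mu(B)}{\mu(B')}\|f\|_{BMO(\mathcal{X})}.$$
In particular, if the radii of $B'$ and $B$ differ by at most a factor of $2$, then by the doubling condition \eqref{1.1} the ratio $\mu(B)/\mu(B')$ is bounded by $C_1$, so $|f_{B'}-f_B|\lesssim\|f\|_{BMO(\mathcal{X})}$ with a constant independent of the balls.

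Next, assuming without loss of generality $r_1\leq r_2$ (the opposite case is symmetric, interchanging the roles of the two radii), I would interpolate between the two balls by a dyadic chain. Set $N:=\lceil\log_2(r_2/r_1)\rceil$ and $B^{(k)}:=B(x,2^k r_1)$ for $0\leq k\leq N$, so that $B^{(0)}=B_1$, each consecutive pair satisfies $B^{(k)}\subset B^{(k+1)}$ with radii in ratio $2$, and $B_2\subset B^{(N)}$ with $r_2$ and $2^N r_1$ differing by less than a factor of $2$. Telescoping and applying the building block to each of the $N$ consecutive pairs as well as to the final pair $(B_2,B^{(N)})$ yields
$$|f_{B_1}-f_{B_2}|\leq \sum_{k=0}^{N-1}|f_{B^{(k)}}-f_{B^{(k+1)}}|+|f_{B_2}-f_{B^{(N)}}|\lesssim (N+1)\|f\|_{BMO(\mathcal{X})}.$$
Since $r_1\leq r_2$ gives $N+1\leq 2+\log_2(r_2/r_1)\lesssim 1+|\ln(r_1/r_2)|$, this is exactly the desired bound, and combining with the oscillation term finishes the proof.

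The step I expect to require the most care is the bookkeeping of the dyadic chain: one must check that the number of links is genuinely $O\left(1+|\ln(r_1/r_2)|\right)$ and that every measure ratio along the chain is controlled by a single doubling constant, via \eqref{1.1} (equivalently \eqref{1.3}). The logarithmic factor arises precisely because the chain has $\sim|\ln(r_1/r_2)|$ links, each contributing a bounded oscillation, while the ``$1+$'' is needed to cover the regime in which $r_1$ and $r_2$ are comparable, where $N$ is $0$ or $1$ but the oscillation term is still present.
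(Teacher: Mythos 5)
Your proof is correct and follows essentially the same route as the paper's: split off the $L^p$ oscillation over $B(x,r_1)$ (controlled via the equivalence of the $BMO_p$ and $BMO$ norms) and telescope the difference of averages along a dyadic chain of concentric balls, each link bounded by the doubling constant, so that the $\left(1+\left|\ln(r_1/r_2)\right|\right)$ factor arises from the length of the chain. The only cosmetic difference is that the paper chains through the absolute dyadic scales $B(x,2^j)$ rather than your rescaled balls $B(x,2^k r_1)$.
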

	\begin{proof}
		We only consider the case $0 < r_1 \leq r_2$. Actually, the similar procedure also works for another case $0 < r_2 < r_1$. For $0 < r_1 \leq r_2$, there are $k_1, k_2 \in \mathbb{Z}$ such that $2^{k_1 - 1} < r_1 \leq 2^{k_1}$ and $2^{k_2 - 1} < r_2 \leq 2^{k_2}$. Then $k_1 \leq k_2$ and
		$$(k_2 - k_1 - 1) \ln 2 < \ln \frac{r_2}{r_1} < (k_2 - k_1 + 1) \ln 2.$$
		Thus, we have
			\begin{align*}
				&\quad\left( \frac{1}{\mu(B(x, r_1))} \int_{B(x, r_1)} |f(y) - f_{B(x, r_2)}|^p \d {\mu(y)} \right)^{1/p}\\
				&\leq \left( \frac{1}{\mu(B(x, r_1))} \int_{B(x, r_1)} |f(y) - f_{B(x, 2^{k_1})}|^p \d {\mu(y)} \right)^{1/p} + |f_{B(x, 2^{k_1})} - f_{B(x, r_2)}|\\
		        &\leq \left( \frac{C_1}{\mu(B(x, 2^{k_1}))} \int_{B(x, 2^{k_1})} |f(y) - f_{B(x, 2^{k_1})}|^p \d {\mu(y)} \right)^{1/p} + |f_{B(x, r_2)} - f_{B(x, 2^{k_2})}|\\	
		        &\quad + \sum_{j = k_1}^{k_2 - 1} |f_{B(x, 2^{j+1})} - f_{B(x, 2^j)}|\\
		        &\leq C \|f\|_{\mathrm{BMO}} + \frac{1}{\mu(B(x, r_2))} \int_{B(x, r_2)} |f(y) - f_{B(x, 2^{k_2})}| \d {\mu(y)}\\
		        &\quad + \sum_{j = k_1}^{k_2 - 1} \frac{1}{\mu(B(x, 2^j))} \int_{B(x, 2^j)} |f(y) - f_{B(x, 2^{j+1})}| \d {\mu(y)}\\
		        &\leq C \|f\|_{\mathrm{BMO}} + \frac{C_1}{\mu(B(x, 2^{k_2}))} \int_{B(x, 2^{k_2})} |f(y) - f_{B(x, 2^{k_2})}| \d {\mu(y)}\\
		        &\quad + \sum_{j = k_1}^{k_2 - 1} \frac{C_1}{\mu(B(x, 2^{j+1}))} \int_{B(x, 2^{j+1})} |f(y) - f_{B(x, 2^{j+1})}| \d {\mu(y)}\\
			    &\leq \|f\|_{\mathrm{BMO}} \left( C + 2^n + 2^n (k_2 - k_1) \right)\\
			    &\leq C\left(1+\ln{\frac{r_2}{r_1}}\right)\|f\|_{BMO(\mathcal{X})}.
	\end{align*}
	\end{proof}

	\subsection{The Boundedness on Weighted Lebesgue Spaces}
	
	\qquad In this subsection, we initially obtain the sharp maximal estimates for $m$-linear strongly singular integral operators with generalized kernels and establish the boundedness on weighted Lebesgue spaces.
	
	\begin{theorem}\label{Theorem2.1}
		
		Suppose $T$ is an $m$-\text{\rm GSSIO} on \text{\rm RD}-space and $p_0'\geq\max\{r_1,...,r_m,l_1,...,l_m\}$, where $1/p_0+1/p_0'=1$, $p_0$, $r_j$ and $l_j$ are given as in Definition \ref{definition1.5}, $j=1,...,m.$ If  $0<\delta < 1/m$, then there is a constant $C>0$ such that for all $m$-tuples bounded measurable functions $\vec{f}:=(f_1,..., f_m)$  with bounded support,  $$M_{\delta}^{\sharp}\left(T(\vec{f})\right)(x) \leq C \mathcal{M}_{p_0'}(\vec{f})(x).$$
		
	\end{theorem}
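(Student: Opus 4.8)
The plan is to run the usual sharp-function argument, adapted to the strong-singularity scaling. By the Remark following Definition \ref{definition2.8} it suffices to bound, for each ball $B=B(x_B,r_B)$ containing $x$,
\[
\inf_{c\in\mathbb C}\Big(\frac{1}{\mu(B)}\int_B\big||T(\vec f)(z)|^\delta-|c|^\delta\big|\,\d\mu(z)\Big)^{1/\delta}
\]
by $C\,\mathcal M_{p_0'}(\vec f)(x)$. Since the kernel is smooth only at the scale $d(x,x')^t$ with $t=\alpha$ for small increments, I would set the auxiliary radius $R:=r_B$ if $r_B\ge1$ and $R:=r_B^{\alpha}$ if $r_B<1$, let $B^\ast:=B(x_B,C_0R)$ for a large fixed $C_0$, and split each $f_j=f_j^0+f_j^\infty$ with $f_j^0:=f_j\chi_{B^\ast}$. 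Expanding $\prod_j f_j=\prod_j(f_j^0+f_j^\infty)$ produces the purely local term $T(\vec f^{\,0})$ together with $2^m-1$ mixed terms carrying at least one global factor. Choosing $c$ to be the sum of those mixed terms evaluated at the centre $x_B$, and using $\big||a|^\delta-|c|^\delta\big|\le|a-c|^\delta$ (valid as $0<\delta<1/m\le1$), the problem reduces to estimating the $L^\delta(B)$-average of $T(\vec f^{\,0})(z)$ and of each difference $T(\cdots)(z)-T(\cdots)(x_B)$.

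For the local term I would apply Kolmogorov's inequality (Lemma \ref{Lemma2.1}) with $p=\delta$ and $r=q$ (small balls) or $r=r$ (large balls); since each $l_j,r_j\ge1$ gives $l,r\ge1/m$ and hence $q\ge l\ge1/m>\delta$, the hypothesis $p<r$ holds. Thus $\big(\mu(B)^{-1}\int_B|T(\vec f^{\,0})|^\delta\big)^{1/\delta}\lesssim\mu(B)^{-1/q}\|T(\vec f^{\,0})\|_{L^{q,\infty}(\mathcal X)}$, and condition (iii) of Definition \ref{definition1.5} together with Jensen's inequality (permitted because $l_j\le p_0'$) bounds this by $\mu(B)^{-1/q}\mu(B^\ast)^{1/l}\,\mathcal M_{p_0'}(\vec f)(x)$. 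Invoking $\mu(B(\cdot,\rho))\sim\varphi(\rho)$ and the two-sided bounds \eqref{1.5}--\eqref{1.6}, for $r_B<1$ one finds $\mu(B^\ast)^{1/l}\mu(B)^{-1/q}\lesssim r_B^{\alpha\kappa/l-n/q}$, which is $\lesssim1$ precisely under the scaling restriction $l/q\le\alpha\kappa/n$ built into Definition \ref{definition1.5}(iii); for $r_B\ge1$ the balls $B$ and $B^\ast$ are comparable, so this factor is bounded and condition (ii) is used instead.

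For a mixed difference I would decompose the global region into the annuli $A_k:=\{2^kd(z,x_B)^t\le\rho(\vec y,\vec z)\le2^{k+1}d(z,x_B)^t\}$ prescribed by Definition \ref{definition1.4}, where $\vec z:=(z,\dots,z)$. Because at least one argument is global, $\rho(\vec y,\vec z)\gtrsim R$, so only the indices $k\ge k_0$ with $2^{k_0}d(z,x_B)^t\sim R$ contribute. On each $A_k$, Hölder's inequality with exponents $(p_0,p_0')$ separates the kernel factor, controlled by the generalized kernel estimate, from the product factor $\prod_j\big(\int_{B(z,R_k)}|f_j|^{p_0'}\big)^{1/p_0'}$ with $R_k:=2^{k+1}d(z,x_B)^t$ (here $f_j^0,f_j^\infty\le|f_j|$ is used). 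Factoring out $\mu(B(z,R_k))^{m/p_0'}\sim\varphi(R_k)^{m/p_0'}$ and enlarging $B(z,R_k)$ by a fixed factor to contain $x$ (permissible by doubling), the product factor is dominated by $\varphi(R_k)^{m/p_0'}\mathcal M_{p_0'}(\vec f)(x)$. Summing over $k$ thus leaves the series
\[
\sum_{k\ge k_0}d(z,x_B)^{\varepsilon-t(\beta m/p_0'+\varepsilon/\alpha)}\,2^{-k(nm/p_0'+\varepsilon/\alpha)}\,\varphi\big(2^{k+1}d(z,x_B)^t\big)^{m/p_0'}.
\]

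The crux is to show this series is $\lesssim1$ uniformly in $z\in B$, and this is where I expect the real work to be. I would split the sum according to whether $R_k<1$ or $R_k\ge1$ and use $\varphi(R_k)^{m/p_0'}\lesssim R_k^{\kappa m/p_0'}$ or $\lesssim R_k^{nm/p_0'}$ from \eqref{1.5}--\eqref{1.6}. In the small-ball regime ($t=\alpha$, $\beta=\kappa$) the prefactor collapses to $d(z,x_B)^{-\alpha\kappa m/p_0'}$ because the $\varepsilon$ terms cancel; one then checks that the accumulated powers of $d(z,x_B)$ recombine to a nonnegative exponent (using $d(z,x_B)<r_B<1$ and $n\ge\kappa$), while the geometric factor becomes $2^{-k((n-\kappa)m/p_0'+\varepsilon/\alpha)}$ for $R_k<1$ and $2^{-k\varepsilon/\alpha}$ for $R_k\ge1$, both summable since $n\ge\kappa$ and $\varepsilon>0$. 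The large-ball regime ($\beta=n$) is handled the same way after a routine sub-split according as $d(z,x_B)\gtrless1$, the residual power of $d(z,x_B)$ having the correct sign in each case so that the factor stays $\lesssim1$. Averaging the resulting uniform bound over $z\in B$ and taking the supremum over all $B\ni x$ then gives $M_\delta^\sharp(T(\vec f))(x)\lesssim\mathcal M_{p_0'}(\vec f)(x)$, as claimed.
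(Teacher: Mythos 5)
Your proposal is correct and follows essentially the same route as the paper's proof: the same local/global splitting at scale $r_B$ (respectively $r_B^{\alpha}$), Kolmogorov's inequality plus conditions (ii)/(iii) of Definition \ref{definition1.5} for the local piece, and H\"older plus the generalized kernel condition summed over dyadic annuli for the global piece, with the exponent bookkeeping ($\alpha\kappa/l-n/q\geq 0$, the cancellation of the $\varepsilon$ terms, and $n\geq\kappa$) matching the paper's. The only deviation is that you subtract the global terms evaluated at the centre $x_B$ rather than at a point $z_0\in 6B\setminus 5B$ (or $u_0\in 3B\setminus 2B$) as the paper does; the paper's choice pins $d(z,z_0)\sim r_B$ so that $t$ and $\beta$ are fixed throughout each case, whereas your choice lets $d(z,x_B)\to 0$ and forces the extra sub-splits on $d(z,x_B)\gtrless 1$ and $R_k\gtrless 1$ that you describe --- these do close up correctly, so the argument is sound, just slightly heavier in its case analysis.
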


	\begin{proof}
		We shall examine two cases for any ball $B :=B(z_B, r_B) $ containing $ x $, where $ r_B > 0 $.\\
			Case 1: $r_B \geq \displaystyle \frac{1}{4}$.\\
			
		Set $f_j^0:=f_j\chi_{32B}$ and $f_j^{\infty}:=f_j\chi_{(32B)^c}$. Then 
			\begin{align*}
				\prod\limits_{j=1}^mf_j(y_j)&=\prod\limits_{j=1}^mf_j^0(y_j) +\sum'f_1^{b_1}(y_1)\cdots f_m^{b_m}(y_m),
			\end{align*}
			where each term of $\sum'$, $b_j\in \{0,\infty\}$ and at least one $b_j = \infty$, $j=1,...,m$. Let $\vec{f}_0:=(f_1^0,... , f_m^0)$ and $\vec{f}_b:=(f_1^{b_1},..., f_m^{b_m})$. Then $$T(\vec{f})(z)= T(\vec{f_0})(z)+\sum'T(\vec{f_b})(z).$$
			
		Pick a $z_0\in 6B\backslash 5B$ and set $c_0:=\sum'T(\vec{f_b})(z_0)$. Then 
			\begin{align*}	
				&\quad\left(\frac{1}{\mu(B)}\int_B\left|\left|T(\vec{f})(z)\right|^{\delta}-|c_0|^{\delta}\right|\d {\mu(z)}\right)^{\frac{1}{\delta}}\\
				&\lesssim \left(\frac{1}{\mu(B)}\int_B\left|T(\vec{f})(z)-c_0\right|^{\delta}\d {\mu(z)}\right)^{\frac{1}{\delta}}\\
				&\lesssim \left(\frac{1}{\mu(B)}\int_B\left|T(\vec{f_0})(z)\right|^{\delta}\d {\mu(z)}\right)^{\frac{1}{\delta}}+\left(\frac{1}{\mu(B)}\int_B\left|\sum'T(\vec{f_b})(z)-\sum'T(\vec{f_b})(z_0)\right|^{\delta}\d {\mu(z)}\right)^{\frac{1}{\delta}}\\
				&:= I_1+I_2.
			\end{align*}
			
	Noting that $0<\delta<1/m \leq r$, by Lemma \ref{Lemma2.1}, (ii) of Definition \ref{definition1.5},  (\ref{1.3}), and H\"{o}lder's inequality, we have 
			\begin{align*}
				I_1&\lesssim \mu(B)^{-1/r}\left\|T(\vec{f_0})\right\|_{L^{r,\infty}(B)}\\
				&\lesssim \mu(B)^{-1/r}\prod\limits_{j=1}^m\left\|f_j^0\right\|_{L^{r_j}(\mathcal{X})}\\
				&\lesssim \mu(B)^{-1/r}\mu(32B)^{1/r}\prod\limits_{j=1}^m\left(\frac{1}{\mu(32B)}\int_{32B}|f_j(y_j)|^{r_j}\d {\mu(y_j)}\right)^{\frac{1}{r_j}}\\
				&\lesssim \prod\limits_{j=1}^m\left(\frac{1}{\mu(32B)}\int_{32B}|f_j(y_j)|^{p_0'}\d {\mu(y_j)}\right)^{\frac{1}{p_0'}}\\
				&\lesssim \mathcal{M}_{p_0'}(\vec{f})(x).
			\end{align*}
			
			For $I_2$, we have the following inequality
			\begin{align*}
				I_2&\lesssim \sum'\left(\frac{1}{\mu(B)}\int_B\left|T(\vec{f_b})(z)-T(\vec{f_b})(z_0)\right|^{\delta}\d {\mu(z)}\right)^{\frac{1}{\delta}}\\
				&\lesssim \sum'\left(\frac{1}{\mu(B)}\int_B\left|T(\vec{f_b})(z)-T(\vec{f_b})(z_0)\right|\d {\mu(z)}\right).
			\end{align*}
			
		Let $\vec{z}_0:=(z_0,...,z_0)$. For $z\in B$, $z_0\in 6B\backslash 5B$ and some $y_j\in (32B)^c$ with $j=1,...,m$, there are  $1\leq 4r_B \leq d(z,z_0) \leq 7r_B$ and $\rho(\vec{y},\vec{z}_0)=\max\limits_{1\leq i \leq m}d(y_i,z_0)\geq 2d(z,z_0)$. From H\"{o}lder's inequality,  (i) of  Definition \ref{definition1.5}, and (\ref{1.5}), it follows that,
			\begin{align*}
				&\quad\left|T(\vec{f_b})(z)-T(\vec{f_b})(z_0)\right|\\
				&\lesssim \int_{\mathcal{X}^m}\left|K(z,y_1,...,y_m)-K(z_0,y_1,...,y_m)\right|\prod\limits_{j=1}^m|f^{b_j}_j(y_j)|\d {\mathbf{\overrightarrow{\mu(y)}}}\\
				&\lesssim \sum\limits_{k=1}^{\infty}\left(\int_{2^kd(z,z_0)\leq \rho(\vec{y},\vec{z}_0)\le 2^{k+1}d(z,z_0)}|K(z,y_1,...,y_m)-K(z_0,y_1,...,y_m)|^{p_0}\d {\mathbf{\overrightarrow{\mu(y)}}}\right)^{\frac{1}{p_0}}\\
				&\quad\times \left(\int_{2^kd(z,z_0)\leq \rho(\vec{y},\vec{z}_0)\le 2^{k+1}d(z,z_0)}\prod\limits_{j=1}^m|f_j(y_j)|^{p_0'}\d {\mathbf{\overrightarrow{\mu(y)}}}\right)^{\frac{1}{p_0'}}\\
				&\lesssim \sum\limits_{k=1}^{\infty}d(z,z_0)^{\varepsilon-\left(\frac{n m}{p_0'}+\frac{\varepsilon}{\alpha}\right)}2^{-k\left(\frac{nm}{p_0'}+\frac{\varepsilon}{\alpha}\right)}\prod\limits_{j=1}^m\left(\int_{B(z_0,2^{k+1}d(z,z_0))}|f_j(y_j)|^{p_0'}\d {\mu(y_j)}\right)^{\frac{1}{p_0'}}\\
				&\lesssim \sum\limits_{k=1}^{\infty}d(z,z_0)^{\varepsilon-\left(\frac{n m}{p_0'}+\frac{\varepsilon}{\alpha}\right)}2^{-k\left(\frac{nm}{p_0'}+\frac{\varepsilon}{\alpha}\right)}\mu\left(B(z_0,2^{k+1}d(z,z_0))\right)^{\frac{m}{p_0'}}\mathcal{M}_{p_0'}(\vec{f})(x)\\
				&\lesssim  \sum\limits_{k=1}^{\infty}d(z,z_0)^{\varepsilon-\left(\frac{n m}{p_0'}+\frac{\varepsilon}{\alpha}\right)}2^{-k\left(\frac{nm}{p_0'}+\frac{\varepsilon}{\alpha}\right)}\left(2^{k+1}d(z,z_0)\right)^{\frac{nm}{p_0'}}\mathcal{M}_{p_0'}(\vec{f})(x)\\
				&\lesssim \sum\limits_{k=1}^{\infty}d(z,z_0)^{\varepsilon-\frac{\varepsilon}{\alpha}}2^{-\frac{k\varepsilon}{\alpha}}\mathcal{M}_{p_0'}(\vec{f})(x)\\
				&\lesssim r_B^{\varepsilon-\frac{\varepsilon}{\alpha}}\mathcal{M}_{p_0'}(\vec{f})(x)\\
				&\lesssim \mathcal{M}_{p_0'}(\vec{f})(x).
			\end{align*}
			
			Based on this result, we have the following estimate for $I_2$.
			\begin{align*}
				I_2&\lesssim \sum'\left(\frac{1}{\mu(B)}\int_B\left|T(\vec{f_b})(z)-T(\vec{f_b})(z_0)\right|\d {\mu(z)}\right)\\
				&\lesssim \mathcal{M}_{p_0'}(\vec{f})(x).
			\end{align*}\\
			Case 2 : $0< r_B < \displaystyle \frac{1}{4}$.\\
			
			 Set $ \tilde{B}: = B(z_B,r_B^{\alpha})$, $\tilde{f_j}^0 :=f_j\chi_{16\tilde{B}}$, and $ \tilde{f_j}^{\infty}:=f_j\chi_{(16\tilde{B})^c}$.  Then 
			\begin{align*}
				\prod\limits_{j=1}^mf_j(y_j)&=\prod\limits_{j=1}^m\tilde{f}_j^0(y_j) +\sum'\tilde{f}_1^{b_1}(y_1)\cdots \tilde{f}_m^{b_m}(y_m),
			\end{align*}
			where each term of $\sum'$, $b_j\in \{0,\infty\}$ and at least one $b_j = \infty,j=1,...,m$. Let $\vec{f}_{\tilde{0}}=(\tilde{f}_1^0,... , \tilde{f}_m^0)$ and $\vec{f}_{\tilde{b}}=(\tilde{f}_1^{b_1},... , \tilde{f}_m^{b_m})$. Then $$T(\vec{f})(z)= T(\vec{f}_{\tilde{0}})(z)+\sum'T(\vec{f}_{\tilde{b}})(z).$$
			
		Pick a $u_0\in 3B\backslash 2B$ and set $c_1:=\sum'T(\vec{f}_{\tilde{b}})(u_0)$. Then 
			\begin{align*}	
				&\quad\left(\frac{1}{\mu(B)}\int_B\left||T(\vec{f})(z)|^{\delta}-|c_1|^{\delta}\right|\d {\mu(z)}\right)^{\frac{1}{\delta}}\\
				&\lesssim \left(\frac{1}{\mu(B)}\int_B\left|T(\vec{f}_{\tilde{0}})(z)\right|^{\delta}\d {\mu(z)}\right)^{\frac{1}{\delta}}+\left(\frac{1}{\mu(B)}\int_B\left|\sum'T(\vec{f}_{\tilde{b}})(z)-\sum'T(\vec{f}_{\tilde{b}})(u_0)\right|^{\delta}\d {\mu(z)}\right)^{\frac{1}{\delta}}\\
				&:= \tilde{I}_1+\tilde{I}_2.
			\end{align*}
			
			 Note that $0< r_B < 1/4$, $\alpha\kappa/l - n/q \geq 0$, and $0<\delta<q<\infty$. By Lemma \ref{Lemma2.1}, (iii) of Definition \ref{definition1.5}, (\ref{1.3}), (\ref{1.6}) and H\"{o}lder's inequality, we have 
			\begin{align*}
				\tilde{I}_1&\lesssim \mu(B)^{-1/q}\left\|T(\vec{f}_{\tilde{0}})\right\|_{L^{q,\infty}(B)}\\
				&\lesssim \mu(B)^{-1/q}\prod\limits_{j=1}^m\left\|\tilde{f_j^0}\right\|_{L^{l_j}(\mathcal{X})}\\
				&=\mu(B)^{-1/q}\mu(16\tilde{B})^{1/l}\prod\limits_{j=1}^m\left(\frac{1}{\mu(16\tilde{B})}\int_{16\tilde{B}}|f_j(y_j)|^{l_j}\d {\mu(y_j)}\right)^{\frac{1}{l_j}}\\
				&\lesssim r_B^{\alpha\kappa/l-n/q}\prod\limits_{j=1}^m\left(\frac{1}{\mu(16\tilde{B})}\int_{16\tilde{B}}|f_j(y_j)|^{l_j}\d {\mu(y_j)}\right)^{\frac{1}{l_j}}\\
				&\lesssim r_B^{\alpha\kappa/l-n/q}\prod\limits_{j=1}^m\left(\frac{1}{\mu(16\tilde{B})}\int_{16\tilde{B}}|f_j(y_j)|^{p_0'}\d {\mu(y_j)}\right)^{\frac{1}{p_0'}}\\
				&\lesssim \mathcal{M}_{p_0'}(\vec{f})(x).
			\end{align*}
			
			For $\tilde{I}_2$, we have the following inequality
			\begin{align*}
				\tilde{I}_2&\lesssim \sum'\left(\frac{1}{\mu(B)}\int_B\left|T(\vec{f}_{\tilde{b}})(z)-T(\vec{f}_{\tilde{b}})(u_0)\right|^{\delta}\d {\mu(z)}\right)^{\frac{1}{\delta}}\\
				&\lesssim \sum'\left(\frac{1}{\mu(B)}\int_B\left|T(\vec{f}_{\tilde{b}})(z)-T(\vec{f}_{\tilde{b}})(u_0)\right|\d {\mu(z)}\right).
			\end{align*}
			
			Let $\vec{u}_0:=(u_0,...,u_0)$. For $z\in B$, $u_0\in 3B\backslash 2B$ and some $y_j\in (16\tilde{B})^c$ with   $j=1,...,m$, there are  $ r_B \leq d(z,u_0) \leq 4r_B < 1$ and $\rho(\vec{y},\vec{u}_0)=\max\limits_{1\leq i \leq m}d(y_i,u_0)\geq 2d(z,u_0)^{\alpha}$. From H\"{o}lder's inequality, (i) of Definition \ref{definition1.5}, (\ref{1.3})  and (\ref{1.6}),  we obtain that 
			\begin{align*}
				&\quad\left|T(\vec{f}_{\tilde{b}})(z)-T(\vec{f}_{\tilde{b}})(u_0)\right|\\
				&\lesssim \int_{\mathcal{X}^m}\left|K(z,y_1,...,y_m)-K(u_0,y_1,...,y_m)\right|\prod\limits_{j=1}^m\left|\tilde{f}^{b_j}_j(y_j)\right|\d {\mathbf{\overrightarrow{\mu(y)}}}\\
				&\lesssim \sum\limits_{k=1}^{\infty}\left(\int_{2^kd(z,u_0)^{\alpha}\leq \rho(\vec{y},\vec{u}_0)\leq 2^{k+1}d(z,u_0)^{\alpha}}\left|K(z,y_1,...,y_m)-K(u_0,y_1,...,y_m)\right|^{p_0}\d {\mathbf{\overrightarrow{\mu(y)}}}\right)^{\frac{1}{p_0}}\\
				&\quad\times \left(\int_{2^kd(z,u_0)^{\alpha}\leq \rho(\vec{y},\vec{u}_0)\le 2^{k+1}d(z,u_0)^{\alpha}}\prod\limits_{j=1}^m\left|f_j(y_j)\right|^{p_0'}\d {\mathbf{\overrightarrow{\mu(y)}}}\right)^{\frac{1}{p_0'}}\\
				&\lesssim \sum\limits_{k=1}^{\infty}d(z,u_0)^{\varepsilon-\alpha\left(\frac{\kappa m}{p_0'}+\frac{\varepsilon}{\alpha}\right)}2^{-k\left(\frac{nm}{p_0'}+\frac{\varepsilon}{\alpha}\right)}\prod\limits_{j=1}^m\left(\int_{B\left(u_0,2^{k+1}d(z,u_0)^{\alpha}\right)}|f_j(y_j)|^{p_0'}\d {\mu(y_j)}\right)^{\frac{1}{p_0'}}\\
				&\lesssim \sum\limits_{k=1}^{\infty}d(z,u_0)^{\varepsilon-\alpha\left(\frac{\kappa m}{p_0'}+\frac{\varepsilon}{\alpha}\right)}2^{-k\left(\frac{nm}{p_0'}+\frac{\varepsilon}{\alpha}\right)}\mu\left(B\left(u_0,2^{k+1}d(z,u_0)^{\alpha}\right)\right)^{\frac{m}{p_0'}}\mathcal{M}_{p_0'}(\vec{f})(x)\\
				&\lesssim  \sum\limits_{k=1}^{\infty}d(z,u_0)^{\varepsilon-\alpha\left(\frac{\kappa m}{p_0'}+\frac{\varepsilon}{\alpha}\right)}2^{-k\left(\frac{nm}{p_0'}+\frac{\varepsilon}{\alpha}\right)}2^{(k+1)\frac{nm}{p_0'}}d(z,u_0)^{\alpha\frac{\kappa m}{p_0'}}\mathcal{M}_{p_0'}(\vec{f})(x)\\
				&\lesssim \sum\limits_{k=1}^{\infty}2^{-\frac{k\varepsilon}{\alpha}}\mathcal{M}_{p_0'}(\vec{f})(x)\\
				&\lesssim \mathcal{M}_{p_0'}(\vec{f})(x).
			\end{align*}
			
			Based on this result, we have the following estimate for $\tilde{I}_2$,
			\begin{align*}
				\tilde{I_2}&\lesssim \sum'\left(\frac{1}{\mu(B)}\int_B\left|T(\vec{f}_{\tilde{b}})(z)-T(\vec{f}_{\tilde{b}})(u_0)\right|\d{\mu(z)}\right)\\
				&\lesssim \mathcal{M}_{p_0'}(\vec{f})(x).	
			\end{align*}
		Combining estimates from both cases, we conclude that $$M_{\delta}^{\sharp}\left(T(\vec{f})\right)(x)\sim \sup\limits_{B\ni x}\inf\limits_{a\in \mathbb{C}}\left(\frac{1}{\mu(B)}\int_B\left|\left|T(\vec{f})(z)\right|^{\delta}-|a|^{\delta}\right|\d {\mu(z)}\right)^{\frac{1}{\delta}} \leq C \mathcal{M}_{p_0'}(\vec{f})(x).$$
		This completes the proof.	
	\end{proof}
	
	\begin{theorem}\label{Theorem 2.2} 
		Suppose $T$ is an $m$-\text{\rm GSSIO} on \text{\rm RD}-space, $\displaystyle p_0'\geq\max\{r_1,...,r_m,l_1,...,l_m\}$ and $1/p_0 + 1/p_0' = 1,$ where $\displaystyle p_0,r_j$ and $l_j$ are given by Definition $\ref{definition1.5}$, $j=1,...,m$, $\vec{p}:=(p_1,...,p_m)$, $\vec{w}:=(w_1,...,w_m)\in A_{\vec{p}/p_0'}$, $1/p=1/p_1+\cdots+1/p_m$ and $\nu_{\vec{w}}:=\prod\limits_{j=1}^mw_j^{p/p_j}$. Then the following statements hold.
		\begin{itemize}
			\item [\rm (i)] If $p_0'<p_j<\infty$, $j=1,...,m$, then there exists a constant $C>0$ such that $$\left\|T(\vec{f})\right\|_{L^p(\mathcal{X},\nu_{\vec{w}})} \leq C \prod\limits_{j=1}^m\left\|f_j\right\|_{L^{p_j}(\mathcal{X},w_j)}.$$ 
			
			\item [\rm (ii)]  If $p_0'\leq p_j<\infty$, $j=1,...,m$, and at least one of $p_j =p_0'$, then there exists a constant $C>0$ such that $$\left\|T(\vec{f})\right\|_{L^{p,\infty}(\mathcal{X},\nu_{\vec{w}})} \leq C \prod\limits_{j=1}^m\left\|f_j\right\|_{L^{p_j}(\mathcal{X},w_j)}.$$
		\end{itemize}

	\end{theorem}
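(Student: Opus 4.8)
The plan is to combine the sharp maximal estimate of Theorem~\ref{Theorem2.1} with the Fefferman--Stein type inequality of Lemma~\ref{Lemma2.6} and the weighted bounds for the multilinear maximal operator $\mathcal{M}$ in Lemmas~\ref{Lemma2.3} and \ref{Lemma2.4}. It suffices to prove both inequalities for $m$-tuples $\vec{f}=(f_1,\dots,f_m)$ of bounded measurable functions with bounded support, the general case following by a routine density argument in $\prod_{j}L^{p_j}(\mathcal{X},w_j)$. For such $\vec{f}$, each $f_j$ lies in every $L^{s}(\mathcal{X})$, so Definition~\ref{definition1.5}(ii) gives $T(\vec{f})\in L^{r,\infty}(\mathcal{X})$ with $1/r=\sum_{j}1/r_j$; moreover $\nu_{\vec{w}}\in A_{\infty}$ because $\vec{w}\in A_{\vec{p}/p_0'}$ forces $\nu_{\vec{w}}\in A_{mp/p_0'}$ by Lemma~\ref{Lemma2.2}. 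Fixing $\delta\in(0,1/m)$ and noting $\delta<1/m\le r$ (since each $r_j\ge 1$), Lemma~\ref{Lemma2.6} applies to $f=T(\vec{f})$ with weight $\nu_{\vec{w}}$.

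First I would set up the norm chain. In part (i), $p_j>p_0'\ge r_j$ gives $p>r$; in part (ii), $p_j\ge p_0'\ge r_j$ gives $p\ge r$. Hence Lemma~\ref{Lemma2.6}(i) respectively (ii), followed by the pointwise bound of Theorem~\ref{Theorem2.1} (and the monotonicity of the $L^p$ and $L^{p,\infty}$ quasinorms under $0\le g\le h$), yields
\begin{equation*}
\|T(\vec{f})\|_{L^p(\mathcal{X},\nu_{\vec{w}})}\lesssim \|M_{\delta}^{\sharp}(T(\vec{f}))\|_{L^p(\mathcal{X},\nu_{\vec{w}})}\lesssim \|\mathcal{M}_{p_0'}(\vec{f})\|_{L^p(\mathcal{X},\nu_{\vec{w}})}
\end{equation*}
in case (i), and the analogue with $L^{p,\infty}$ replacing $L^p$ in case (ii). Everything thus reduces to a weighted bound for $\mathcal{M}_{p_0'}$.

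For that reduction I would use the scaling identity $[\mathcal{M}_{p_0'}(\vec{f})]^{p_0'}=\mathcal{M}(|f_1|^{p_0'},\dots,|f_m|^{p_0'})$. Setting $g_j:=|f_j|^{p_0'}$, $q_j:=p_j/p_0'$ and $q:=p/p_0'$, one checks $1/q=\sum_{j}1/q_j$ and $\prod_{j}w_j^{q/q_j}=\nu_{\vec{w}}$, so the hypothesis $\vec{w}\in A_{\vec{p}/p_0'}$ is exactly $\vec{w}\in A_{\vec{q}}$. In case (i) all $q_j>1$, so Lemma~\ref{Lemma2.3} gives $\|\mathcal{M}(\vec{g})\|_{L^{q}(\mathcal{X},\nu_{\vec{w}})}\lesssim\prod_{j}\|g_j\|_{L^{q_j}(\mathcal{X},w_j)}$; taking $p_0'$-th roots and using $\|\mathcal{M}(\vec{g})\|_{L^{q}(\mathcal{X},\nu_{\vec{w}})}=\|\mathcal{M}_{p_0'}(\vec{f})\|_{L^{p}(\mathcal{X},\nu_{\vec{w}})}^{p_0'}$ together with $\|g_j\|_{L^{q_j}(\mathcal{X},w_j)}=\|f_j\|_{L^{p_j}(\mathcal{X},w_j)}^{p_0'}$ produces the claimed estimate. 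In case (ii) at least one $q_j=1$, so the strong-type Lemma~\ref{Lemma2.3} is unavailable and I would instead invoke the weak-type Lemma~\ref{Lemma2.4}, combined with the weak-norm scaling $\|\mathcal{M}(\vec{g})\|_{L^{q,\infty}(\mathcal{X},\nu_{\vec{w}})}=\|\mathcal{M}_{p_0'}(\vec{f})\|_{L^{p,\infty}(\mathcal{X},\nu_{\vec{w}})}^{p_0'}$.

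The main obstacle is bookkeeping rather than a genuine difficulty: one must correctly transport the weight condition through the $p_0'$-rescaling, verifying that $A_{\vec{p}/p_0'}$ coincides with $A_{\vec{q}}$ and that $\nu_{\vec{w}}$ is left unchanged, and one must recognize that the endpoint hypothesis ``at least one $p_j=p_0'$'' corresponds precisely to some $q_j=1$. This is exactly the case where only the weak-type multilinear maximal estimate (Lemma~\ref{Lemma2.4}) is available, which is what forces the conclusion in (ii) to be of weak type. The verification of the quasinorm identities under scaling and the final density step are routine.
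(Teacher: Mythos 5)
Your proposal is correct and follows essentially the same route as the paper's proof: $\nu_{\vec{w}}\in A_{\infty}$ via Lemma \ref{Lemma2.2}, then the chain Lemma \ref{Lemma2.6} $\rightarrow$ Theorem \ref{Theorem2.1} $\rightarrow$ the $p_0'$-rescaling identity $\rightarrow$ Lemma \ref{Lemma2.3} (strong case) or Lemma \ref{Lemma2.4} (weak case). Your additional remarks (checking $p>r$ resp.\ $p\ge r$ so that Lemma \ref{Lemma2.6} applies, and the density reduction to bounded compactly supported tuples) only make explicit what the paper leaves implicit.
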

	
	\begin{proof}
		By Lemma \ref{Lemma2.2}, we can get that $\nu_{\vec{w}} \in A_{mp/p_0'} \subset A_{\infty}$. Taking a $\delta$ such that $0<\delta < 1/m$, then  applying  Lemma \ref{Lemma2.6}, Theorem \ref{Theorem2.1}, and Lemma \ref{Lemma2.3}, for the case (i), we have 
		\begin{align*}
			\left\|T(\vec{f})\right\|_{L^p(\mathcal{X},\nu_{\vec{w}})}&\lesssim  \left\|M_{\delta}^{\sharp}\left(T(\vec{f})\right)\right\|_{L^p(\mathcal{X},\nu_{\vec{w}})} \lesssim \left\|\mathcal{M}_{p_0'}(\vec{f})\right\|_{L^p(\mathcal{X},\nu_{\vec{w}})}\\
			&= \left\|\mathcal{M}\left(|\vec{f}|^{p_0'}\right)\right\|_{L^{p/p_0'}(\mathcal{X},\nu_{\vec{w}})}^{1/p_0'} \lesssim \prod\limits_{j=1}^m\left\||f_j|^{p_0'}\right\|_{L^{p_j/p_0'}(\mathcal{X},w_j)}^{1/p_0'}\\
			&=\prod\limits_{j=1}^m\left\|f_j\right\|_{L^{p_j}(\mathcal{X},w_j)}.
		\end{align*}
		The proof of (i) is completed. By Lemma \ref{Lemma2.6}, Theorem \ref{Theorem2.1}, and Lemma \ref{Lemma2.4}, for the case (ii), we have 
			\begin{align*}
			\left\|T(\vec{f})\right\|_{L^{p,\infty}(\mathcal{X},\nu_{\vec{w}})}&\lesssim  \left\|M_{\delta}^{\sharp}\left(T(\vec{f})\right)\right\|_{L^{p,\infty}(\mathcal{X},\nu_{\vec{w}})} \lesssim \left\|\mathcal{M}_{p_0'}(\vec{f})\right\|_{L^{p,\infty}(\mathcal{X},\nu_{\vec{w}})}\\
			&=\left\|\mathcal{M}\left(|\vec{f}|^{p_0'}\right)\right\|_{L^{p/p_0',\infty}(\mathcal{X},\nu_{\vec{w}})}^{1/p_0'} \lesssim \prod\limits_{j=1}^m\left\||f_j|^{p_0'}\right\|_{L^{p_j/p_0'}(\mathcal{X},w_j)}^{1/p_0'}\\
			&=\prod\limits_{j=1}^m\left\|f_j\right\|_{L^{p_j}(\mathcal{X},w_j)}.
		\end{align*}
		The proof of (ii) is completed.
	\end{proof}
	\begin{remark}
		\begin{itemize}
			\item[\rm (i)]When $\mathcal{X} = \mathbb{R}^n$, i.e., the RD space reduces to the classical Euclidean space, our boundedness results for the operator on weighted Lebesgue spaces coincide with Theorem 2.2 established by Wei et al. in \cite{B2023}. Additionally,  the weak-type result is new even for the $\mathbb{R}^n$ spaces.
			
			\item[\rm (ii)]If the condition of the kernel is strengthened to the pointwise estimate, and the parameters $\kappa$ and $n$ of the \text{\rm RD}-space  satisfy $\kappa = n$, the boundedness results we obtained on weighted Lebesgue spaces recover Theorem 1.1 established by Li and Wu in \cite{W2024}. Furthermore, we establish a weak-type result on weighted Lebesgue spaces, which is new in this setting, and our framework is more general.
		\end{itemize}
	\end{remark}
	\section{Endpoint Estimations}\label{sec3}
	
	\qquad Firstly, we recall some essential definitions and introduce the notations that will be used subsequently. Then
	we establish the boundedness of $m$-linear strongly singular integral operator with generalized kernel from $L^{\infty}(\mathcal{X})\times \cdots \times L^{\infty}(\mathcal{X})$ into $BMO(\mathcal{X})$
	and from $BMO(\mathcal{X})\times \cdots \times BMO(\mathcal{X})$ into $BMO(\mathcal{X})$.
	
	\subsection{Boundedness of $L^{\infty}(\mathcal{X}) \times \cdot \cdot \cdot \times L^{\infty}(\mathcal{X}) \rightarrow BMO(\mathcal{X})$ Type}

	\begin{theorem}\label{Theorem 3.1} 
		Suppose $T$ is an $m$-\text{\rm GSSIO} on \text{\rm RD}-space, $\displaystyle p_0,q,r_j,l_j$ are given by Definition $\ref{definition1.5}$, $j=1,... ,m$ and $q>1$, $ p_0'\geq\max\{r_1,...,r_m,l_1,...,l_m\}$ satisfied $1/p_0 + 1/p_0' = 1.$  Then, $T$ can be extended to a bounded operator from $L^{\infty}(\mathcal{X})\times \cdots \times L^{\infty}(\mathcal{X})$ into $BMO(\mathcal{X})$.
	\end{theorem}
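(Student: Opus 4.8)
The plan is to prove $\|T(\vec f)\|_{BMO(\mathcal X)}\lesssim\prod_{j=1}^m\|f_j\|_{L^\infty(\mathcal X)}$ by adapting, almost line for line, the two-case argument from the proof of Theorem \ref{Theorem2.1}; the serious preliminary is that $T(\vec f)$ must first be given a meaning for $L^\infty$ inputs that need not be compactly supported. I would handle this exactly as the oscillation is built in Theorem \ref{Theorem2.1}: fix a ball $B=B(z_B,r_B)\ni x$, split each $f_j=f_j^0+f_j^\infty$ with respect to a dilate of $B$ ($32B$ when $r_B\ge\frac14$, and $16\tilde B$ with $\tilde B=B(z_B,r_B^\alpha)$ when $r_B<\frac14$), and observe that in $\prod_j f_j=\prod_j f_j^0+\sum'f_1^{b_1}\cdots f_m^{b_m}$ only the all-local term needs the operator applied directly (each $f_j^0\in L^{r_j}$, resp. $L^{l_j}$), while every mixed term enters solely through a \emph{difference} $T(\vec f_b)(z)-T(\vec f_b)(z_0)$ that converges by the kernel smoothness of Definition \ref{definition1.4}. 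This simultaneously defines $T(\vec f)$ up to an additive constant, which is precisely the datum that a $BMO$ estimate controls.

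By Remark \ref{Remark 2.2}(ii) it then suffices to exhibit, for every ball $B$, a constant $a$ with $\frac{1}{\mu(B)}\int_B|T(\vec f)-a|\,\d{\mu}\lesssim\prod_{j=1}^m\|f_j\|_{L^\infty}$, uniformly in $B$. I would choose $a=\sum'T(\vec f_b)(z_0)$ with $z_0\in 6B\setminus 5B$ in the large-ball case, and $a=\sum'T(\vec f_{\tilde b})(u_0)$ with $u_0\in 3B\setminus 2B$ in the small-ball case, and split the mean oscillation into a local piece $I_1$ (resp. $\tilde I_1$) and a smooth-difference piece $I_2$ (resp. $\tilde I_2$) exactly as in Theorem \ref{Theorem2.1}. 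The essential simplification brought by $L^\infty$ data is that the multilinear maximal term collapses: since $\mathcal M_{p_0'}(\vec f)(x)\le\prod_{j=1}^m\|f_j\|_{L^\infty}$, every bound in Theorem \ref{Theorem2.1} that terminated in $\mathcal M_{p_0'}(\vec f)(x)$ now terminates in the $B$-independent quantity $\prod_j\|f_j\|_{L^\infty}$.

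For the local pieces I would invoke the weak-type hypotheses (ii), (iii) of Definition \ref{definition1.5} together with the Kolmogorov-type inequality Lemma \ref{Lemma2.1}: writing $\|f_j^0\|_{L^{r_j}}\le\|f_j\|_{L^\infty}\mu(32B)^{1/r_j}$ (and the analogue with $l_j$ and $16\tilde B$), the doubling inequality \eqref{1.3} cancels the volume factors in the large-ball case, while in the small-ball case the volume comparisons \eqref{1.3} and \eqref{1.6} leave only the factor $r_B^{\alpha\kappa/l-n/q}\le1$, whose nonnegative exponent is guaranteed by the condition $0<l/q\le\alpha\kappa/n$ in Definition \ref{definition1.5}. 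This is where the hypothesis $q>1$ is indispensable, as Lemma \ref{Lemma2.1} forces the unit averaging exponent to lie strictly below the weak-type exponent $q$ (and the large-ball piece is treated identically through $r$). The smooth-difference pieces $I_2,\tilde I_2$ require no new idea: Hölder's inequality, the annular estimate of Definition \ref{definition1.4}, and summation of the geometric series in $k$ (via \eqref{1.5} for large balls and \eqref{1.3}, \eqref{1.6} for small balls) again leave only $\prod_j\|f_j\|_{L^\infty}$, so assembling the two cases yields the claimed uniform bound.

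The arithmetic of all four pieces descends from Theorem \ref{Theorem2.1}, so I expect the genuine obstacles to be elsewhere. The first is the well-definedness just described: one must verify that the mixed-term differences really converge and that the resulting constant ambiguity is harmless for the $BMO$ statement. The second is controlling the small-ball regime, where the strong singularity ($t=\alpha$, $\beta=\kappa$) forces the reference ball to be dilated to radius $r_B^\alpha$; here the delicate cancellation in the exponent $\alpha\kappa/l-n/q$ must balance the strong-singularity scaling against the upper and lower dimensions $n$ and $\kappa$ of the \text{\rm RD}-space, and it is the nonnegativity of this exponent, rather than any estimate on the kernel itself, that ultimately makes the bound independent of $r_B$.
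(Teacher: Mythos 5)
Your overall architecture (the two-case split on $r_B$, the choice of the constants $c_0$ and $c_1$ at points $z_0\in 6B\setminus 5B$ and $u_0\in 3B\setminus 2B$, the treatment of the difference pieces via H\"older, the annular kernel condition and a geometric series in $k$, and the observation that $\mathcal M_{p_0'}(\vec f)(x)\le\prod_{j=1}^m\|f_j\|_{L^\infty(\mathcal X)}$) matches the paper's proof, and your handling of the small-ball local piece via hypothesis (iii), $q>1$, Lemma \ref{Lemma2.1} and the exponent $\alpha\kappa/l-n/q\ge 0$ is exactly what the paper does. However, there is a genuine gap in your treatment of the large-ball local piece $J_1=\frac{1}{\mu(B)}\int_B|T(\vec f_0)(z)|\,\d{\mu(z)}$. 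You propose to bound it by hypothesis (ii) of Definition \ref{definition1.5} combined with Lemma \ref{Lemma2.1}, ``treated identically through $r$'' as the small-ball piece is through $q$. But Lemma \ref{Lemma2.1} applied with the unit averaging exponent requires $1<r$, whereas $r$ is only defined by $1/r=1/r_1+\cdots+1/r_m$ with $r_j\ge 1$, so $r$ can be as small as $1/m$ and in general satisfies only $r\le 1$; the theorem assumes $q>1$ precisely because no such assumption is available for $r$. In the proof of Theorem \ref{Theorem2.1} the corresponding piece $I_1$ is averaged with exponent $\delta<1/m\le r$, so Lemma \ref{Lemma2.1} applies there; for the $BMO$ oscillation the exponent is $1$ and your route closes only when $r>1$.

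The paper circumvents this by not using hypothesis (ii) for $J_1$ at all: it picks auxiliary exponents $p_1,\dots,p_m$ with $\max\{m,p_0'\}<p_j<\infty$ and $1/p=1/p_1+\cdots+1/p_m$ (so that $p>1$), invokes the strong boundedness $L^{p_1}(\mathcal X)\times\cdots\times L^{p_m}(\mathcal X)\rightarrow L^{p}(\mathcal X)$ already established in Theorem \ref{Theorem 2.2} (i) with $w_j\equiv 1$, and then uses H\"older's inequality and \eqref{1.3} to obtain $J_1\lesssim\mu(B)^{-1/p}\prod_{j=1}^m\|f_j^0\|_{L^{p_j}(\mathcal X)}\lesssim\prod_{j=1}^m\|f_j\|_{L^{\infty}(\mathcal X)}$. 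With this replacement for $J_1$, the remainder of your argument is sound and coincides with the paper's.
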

	
	\begin{proof}
		We shall examine two cases for any ball \( B: = B(z_B, r_B) \subset \mathcal{X} \), where \( r_B > 0 \).\\
	Case 1: $r_B \geq \displaystyle \frac{1}{4}$.\\
	
			 Using the same decomposition and notation as in case 1 of the proof of Theorem \ref{Theorem2.1}, we have
			\begin{align*}	
				\frac{1}{\mu(B)}\int_B\left|T(\vec{f})(z)-c_0\right|\d {\mu(z)}&
				\lesssim \frac{1}{\mu(B)}\int_B\left|T(\vec{f_0})(z)\right|\d {\mu(z)}\\
				&\quad+\frac{1}{\mu(B)}\int_B\left|\sum'T(\vec{f_b})(z)-\sum'T(\vec{f_b})(z_0)\right|\d {\mu(z)}\\
				&:= J_1+J_2.
			\end{align*}
			
			Pick $p_1,...,p_m$ such that $\max\{m,p_0'\}< p_1,...,p_m<\infty$ and $1/p=1/p_1+\cdots+1/p_m$. From H\"{o}lder's inequality, Theorem \ref{Theorem 2.2}, and   (\ref{1.3}), we have 
			\begin{align*}
				J_1&\lesssim\left(\frac{1}{\mu(B)}\int_B\left|T(\vec{f_0})(z)\right|^p\d {\mu(z)}\right)^{1/p}\\
				&\lesssim \mu(B)^{-1/p}\prod\limits_{j=1}^m\left\|f_j^0\right\|_{L^{p_j}(\mathcal{X})}\\
				&\lesssim \mu(B)^{-1/p}\mu(32B)^{1/p}\prod\limits_{j=1}^m\left\|f_j\right\|_{L^{\infty}(\mathcal{X})}\\
				&\lesssim \prod\limits_{j=1}^m\left\|f_j\right\|_{L^{\infty}(\mathcal{X})}.
			\end{align*}
			
		For $J_2$, we have the following inequality
			\begin{align*}
				J_2&\lesssim \sum'\left(\frac{1}{\mu(B)}\int_B\left|T(\vec{f_b})(z)-T(\vec{f_b})(z_0)\right|\d {\mu(z)}\right).
			\end{align*}
			
		By  H\"{o}lder's inequality, (i) of  Definition \ref{definition1.5}, and (\ref{1.5}), we obtain 
			\begin{align*}
				&\qquad\left|T(\vec{f_b})(z)-T(\vec{f_b})(z_0)\right|\\
				&\lesssim \int_{\mathcal{X}^m}\left|K(z,y_1,...,y_m)-K(z_0,y_1,...,y_m)\right|\prod\limits_{j=1}^m|f^{b_j}_j(y_j)|\d {\mathbf{\overrightarrow{\mu(y)}}}\\
				&\lesssim \sum\limits_{k=1}^{\infty}\left(\int_{2^kd(z,z_0)\leq \rho(\vec{y},\vec{z}_0)\le 2^{k+1}d(z,z_0)}|K(z,y_1,...,y_m)-K(z_0,y_1,...,y_m)|^{p_0}\d {\mathbf{\overrightarrow{\mu(y)}}}\right)^{\frac{1}{p_0}}\\
				&\quad\times \left(\int_{2^kd(z,z_0)\leq \rho(\vec{y},\vec{z}_0)\le 2^{k+1}d(z,z_0)}\prod\limits_{j=1}^m|f_j(y_j)|^{p_0'}\d {\mathbf{\overrightarrow{\mu(y)}}}\right)^{\frac{1}{p_0'}}\\
				&\lesssim \sum\limits_{k=1}^{\infty}d(z,z_0)^{\varepsilon-(\frac{n m}{p_0'}+\frac{\varepsilon}{\alpha})}2^{-k(\frac{nm}{p_0'}+\frac{\varepsilon}{\alpha})}\prod\limits_{j=1}^m\left(\int_{B\left(z_0,2^{k+1}d(z,z_0)\right)}|f_j(y_j)|^{p_0'}\d {\mu(y_j)}\right)^{\frac{1}{p_0'}}\\
				&\lesssim \sum\limits_{k=1}^{\infty}d(z,z_0)^{\varepsilon-\left(\frac{n m}{p_0'}+\frac{\varepsilon}{\alpha}\right)}2^{-k\left(\frac{nm}{p_0'}+\frac{\varepsilon}{\alpha}\right)}\mu\left(B(z_0,2^{k+1}d(z,z_0))\right)^{\frac{m}{p_0'}}\prod\limits_{j=1}^m\left\|f_j\right\|_{L^{\infty}(\mathcal{X})}\\
				&\lesssim \sum\limits_{k=1}^{\infty}d(z,z_0)^{\varepsilon-\frac{\varepsilon}{\alpha}}2^{-\frac{k\varepsilon}{\alpha}}\prod\limits_{j=1}^m\left\|f_j\right\|_{L^{\infty}(\mathcal{X})}\\
				&\lesssim r_B^{\varepsilon-\frac{\varepsilon}{\alpha}}\prod\limits_{j=1}^m\left\|f_j\right\|_{L^{\infty}(\mathcal{X})}\\
				&\lesssim \prod\limits_{j=1}^m\left\|f_j\right\|_{L^{\infty}(\mathcal{X})}.
			\end{align*}
			
			Based on this result, we have the following estimate for $J_2$,
			\begin{align*}
				J_2\lesssim \prod\limits_{j=1}^m\left\|f_j\right\|_{L^{\infty}(\mathcal{X})}.
			\end{align*}
			\\
			Case 2:  $0< r_B <\displaystyle \frac{1}{4}$.\\ 
			
			Using the same notations as in case 2 of the proof of Theorem \ref{Theorem2.1}, we have
			\begin{align*}	
				\frac{1}{\mu(B)}\int_B\left|T(\vec{f}_{\tilde{0}})(z)-c_1\right|\d {\mu(z)}
				&\lesssim \frac{1}{\mu(B)}\int_B\left|T(\vec{f}_{\tilde{0}})(z)\right|\d {\mu(z)}\\
				&\quad+\frac{1}{\mu(B)}\int_B\left|\sum'T(\vec{f}_{\tilde{b}})(z)-\sum'T(\vec{f}_{\tilde{b}})(u_0)\right|\d {\mu(z)}\\
				&:= \tilde{J}_1+\tilde{J}_2.
			\end{align*}
			
			Note that $1<q< \infty$ and $0<r_B <1/4$, $\alpha\kappa/l - n/q \geq 0$. From Lemma \ref{Lemma2.1}, (iii) of Definition \ref{definition1.5}, (\ref{1.3}), (\ref{1.6}), and H\"{o}lder's inequality, it follows that 
			\begin{align*}
				\tilde{J}_1&\lesssim \mu(B)^{-1/q}\left\|T(\vec{f}_{\tilde{0}})\right\|_{L^{q,\infty}(B)}\\
				&\lesssim \mu(B)^{-1/q}\prod\limits_{j=1}^m\left\|\tilde{f}_j^0\right\|_{L^{l_j}(\mathcal{X})}\\
				&\lesssim \mu(B)^{-1/q}\mu(16\tilde{B})^{1/l}\prod\limits_{j=1}^m\left\|f_j\right\|_{L^{\infty}(\mathcal{X})}\\
				&\lesssim r_B^{\alpha\kappa/l-n/q}\prod\limits_{j=1}^m\left\|f_j\right\|_{L^{\infty}(\mathcal{X})}\\
				&\lesssim \prod\limits_{j=1}^m\left\|f_j\right\|_{L^{\infty}(\mathcal{X})}.
			\end{align*}
			
		For $\tilde{J}_2$, we have the following inequality
			\begin{align*}
				\tilde{J_2}\lesssim \sum'\left(\frac{1}{\mu(B)}\int_B\left|T(\vec{f}_{\tilde{b}})(z)-T(\vec{f}_{\tilde{b}})(u_0)\right|\d {\mu(z)}\right).
			\end{align*}
			
			As  the proof of Theorem \ref{Theorem2.1}, we have 	
			\begin{align*}
				&\quad\left|T(\vec{f}_{\tilde{b}})(z)-T(\vec{f}_{\tilde{b}})(u_0)\right|\\
				&\lesssim \int_{\mathcal{X}^m}\left|K(z,y_1,...,y_m)-K(u_0,y_1,...,y_m)\right|\prod\limits_{j=1}^m\left|\tilde{f}^{b_j}_j(y_j)\right|\d {\mathbf{\overrightarrow{\mu(y)}}}\\
				&\lesssim \sum\limits_{k=1}^{\infty}\left(\int_{2^kd(z,u_0)^{\alpha}\leq \rho(\vec{y},\vec{u}_0)\leq 2^{k+1}d(z,u_0)^{\alpha}}\left|K(z,y_1,...,y_m)-K(u_0,y_1,...,y_m)\right|^{p_0}\d {\mathbf{\overrightarrow{\mu(y)}}}\right)^{\frac{1}{p_0}}\\
				&\quad\times \left(\int_{2^kd(z,u_0)^{\alpha}\leq \rho(\vec{y},\vec{u}_0)\le 2^{k+1}d(z,u_0)^{\alpha}}\prod\limits_{j=1}^m\left|f_j(y_j)\right|^{p_0'}\d {\mathbf{\overrightarrow{\mu(y)}}}\right)^{\frac{1}{p_0'}}\\
				&\lesssim \sum\limits_{k=1}^{\infty}d(z,u_0)^{\varepsilon-\alpha\left(\frac{\kappa m}{p_0'}+\frac{\varepsilon}{\alpha}\right)}2^{-k\left(\frac{nm}{p_0'}+\frac{\varepsilon}{\alpha}\right)}\mu\left(B(z_0,2^{k+1}d(z,u_0)^{\alpha})\right)^{\frac{m}{p_0'}}\prod\limits_{j=1}^m\left\|f_j\right\|_{L^{\infty}(\mathcal{X})}\\
				&\lesssim \sum\limits_{k=1}^{\infty}2^{-\frac{k\varepsilon}{\alpha}}\prod\limits_{j=1}^m\left\|f_j\right\|_{L^{\infty}(\mathcal{X})}\\
				&\lesssim \prod\limits_{j=1}^m\left\|f_j\right\|_{L^{\infty}(\mathcal{X})}.
			\end{align*}
			
			Based on this result, we have the following estimate for $\tilde{J}_2$,
			\begin{align*}
				\tilde{J}_2\lesssim \prod\limits_{j=1}^m\left\|f_j\right\|_{L^{\infty}(\mathcal{X})}.
			\end{align*}
		Combining estimates from both cases, there is $$\left\|T(\vec{f})\right\|_{BMO}\sim \sup\limits_{B}\inf\limits_{a\in \mathbb{C}}\frac{1}{\mu(B)}\int_B\left|T(\vec{f})(z)-a\right|\d {\mu(z)} \leq C \prod\limits_{j=1}^m\left\|f_j\right\|_{L^{\infty}(\mathcal{X})},$$
		which completes the proof.	
	\end{proof}
	
	\subsection{Boundedness of $BMO(\mathcal{X}) \times \cdot \cdot \cdot \times BMO(\mathcal{X}) \rightarrow BMO(\mathcal{X})$ Type}
	
	\begin{theorem}\label{Theorem 3.2} 
		Suppose $T$ is an $m$-\text{\rm GSSIO} on  \text{\rm RD}-space, $\displaystyle p_0,q,r_j,l_j$ are given by Definition $\ref{definition1.5}$, $j=1,...,m$ and $q>1$, $ p_0'\geq\max\{r_1,...,r_m,l_1,...,l_m\}$, $T(f_1,...,f_{j-1},1,f_{j+1},...,f_m)=0(j=1,...,m)$.  Then, $T$ can be extended to a bounded operator from $BMO(\mathcal{X})\times \cdots \times BMO(\mathcal{X})$ into $BMO(\mathcal{X})$.
	\end{theorem}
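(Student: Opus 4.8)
The plan is to mimic the two-case structure ($r_B\geq 1/4$ and $0<r_B<1/4$) of the proof of Theorem \ref{Theorem 3.1}, replacing the crude bound $\|f_j\|_{L^\infty}$ by $BMO$ oscillation estimates and exploiting the cancellation hypothesis $T(f_1,\dots,f_{j-1},1,f_{j+1},\dots,f_m)=0$. The crucial first step is a reduction via multilinearity: for any constants $b_1,\dots,b_m$ write $f_j=(f_j-b_j)+b_j$ and expand $T(f_1,\dots,f_m)$ into $2^m$ terms. Every term in which some slot $j_0$ carries the constant $b_{j_0}$ equals $b_{j_0}\,T(\dots,1,\dots)=0$ by the hypothesis, so only the fully centered term survives, giving the identity
\begin{equation*}
T(f_1,\dots,f_m)=T(f_1-b_1,\dots,f_m-b_m).
\end{equation*}
In Case 1 I would take $b_j:=(f_j)_{32B}$ and in Case 2 $b_j:=(f_j)_{16\tilde B}$, so that in both cases it suffices to estimate the oscillation over $B$ of $T(\vec g)$ with $g_j:=f_j-b_j$.

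With this reduction I would repeat verbatim the splitting $g_j=g_j^0+g_j^\infty$ relative to the enlarged ball ($32B$ in Case 1, $16\tilde B$ in Case 2), write $T(\vec g)=T(\vec g_0)+\sum' T(\vec g_b)$, choose the same auxiliary point $z_0\in 6B\setminus 5B$ (resp.\ $u_0\in 3B\setminus 2B$), and subtract $c:=\sum' T(\vec g_b)(z_0)$. This produces two pieces $J_1$ and $J_2$ exactly as in Theorem \ref{Theorem 3.1}. For the local piece I would argue as for $J_1$ (resp.\ $\tilde J_1$) there --- via Theorem \ref{Theorem 2.2} in Case 1 and via Lemma \ref{Lemma2.1} together with condition (iii) of Definition \ref{definition1.5} in Case 2 --- the only change being that $\|g_j^0\|_{L^{p_j}}=\bigl(\int_{32B}|f_j-(f_j)_{32B}|^{p_j}\bigr)^{1/p_j}$ is now controlled by $\mu(32B)^{1/p_j}\|f_j\|_{BMO}$ through the John--Nirenberg equivalence $BMO_{p_j}\sim BMO$ recorded in Remark \ref{Remark 2.2}(i) (valid since the chosen $p_j>1$). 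After the measure factors cancel this yields $J_1\lesssim \prod_{j}\|f_j\|_{BMO(\mathcal X)}$.

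The essential new work is in $J_2$. Here I would estimate $|T(\vec g_b)(z)-T(\vec g_b)(z_0)|$ exactly as in Theorem \ref{Theorem 3.1}, dyadically decomposing in $\rho(\vec y,\vec z_0)$ and applying H\"older's inequality with exponents $p_0,p_0'$ against the kernel-smoothness condition (i) of Definition \ref{definition1.4}. The difference is that each function factor is now
\begin{equation*}
\Bigl(\int_{B(z_0,\,2^{k+1}d(z,z_0))}|f_j-(f_j)_{32B}|^{p_0'}\,d\mu\Bigr)^{1/p_0'},
\end{equation*}
which, by Lemma \ref{Lemma2.7} (absorbing the center shift from $z_B$ to $z_0$ by doubling), is bounded by $(1+k)\,\mu\bigl(B(z_0,2^{k+1}d(z,z_0))\bigr)^{1/p_0'}\|f_j\|_{BMO}$ in place of $\mu(\cdot)^{1/p_0'}\|f_j\|_{L^\infty}$. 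Carrying the extra factor $(1+k)^m$ through the computation reproduces the chain of inequalities of Theorem \ref{Theorem 3.1} verbatim, except that the geometric series $\sum_k 2^{-k\varepsilon/\alpha}$ is replaced by $\sum_k (1+k)^m 2^{-k\varepsilon/\alpha}$, still finite. Thus $J_2\lesssim\prod_j\|f_j\|_{BMO(\mathcal X)}$, and Case 2 is handled identically with $\tilde B$, $u_0$, $16\tilde B$ and the $\kappa/n$ exponents of the kernel condition in place of the Case 1 objects.

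The main obstacle I anticipate is precisely this logarithmic growth of the $BMO$ tails: each dyadic annulus at scale $2^k$ costs a factor $\sim(1+k)$ per input from Lemma \ref{Lemma2.7}, and one must verify that the strong-singularity decay $2^{-k\varepsilon/\alpha}$ is genuinely strong enough to be summed against $(1+k)^m$, which it is since $\varepsilon/\alpha>0$. A secondary technical point is the correct interpretation of the hypothesis $T(\dots,1,\dots)=0$ (the constant $1$ lies outside $C_b^{\eta}(\mathcal X)$), which should be read through the already-established bounded extension of $T$; once that is granted, the multilinear expansion above is legitimate and the rest is routine bookkeeping parallel to Theorem \ref{Theorem 3.1}.
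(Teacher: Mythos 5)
Your proposal is correct and follows essentially the same route as the paper: the same cancellation-based reduction to $T(f_1-(f_1)_{32B},\dots)$ (the paper phrases it as a three-way splitting $f_j=f_j^1+f_j^2+f_j^3$ with the constant pieces annihilated), the same local/far decomposition with auxiliary point $z_0\in 6B\setminus 5B$ (resp.\ $u_0\in 3B\setminus 2B$), Theorem \ref{Theorem 2.2} for the local part, and Lemma \ref{Lemma2.7} producing the $(1+k)^m$ factor summed against $2^{-k\varepsilon/\alpha}$ for the far part. No substantive differences.
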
	
	\begin{proof}
		To simplify the proof, we only provide the case when $m = 2$, since the case for $m>2$ is similar. Let $f_1,f_2\in BMO(\mathcal{X})$, then for any ball $B :=B(z_B,r_B)$ with $r_B > 0$, we  consider two cases.\\
			Case 1: $r_B \geq \frac{1}{4}$.\\
			
			 Write
			\begin{align*}
				&f_1=(f_1)_{32B}+ (f_1-(f_1)_{32B})\chi_{32B} + (f_1-(f_1)_{32B})\chi_{(32B)^c}:=f_1^1+f_1^2+f_1^3\\
				&f_2=(f_2)_{32B}+ (f_2-(f_2)_{32B})\chi_{32B} + (f_2-(f_2)_{32B})\chi_{(32B)^c}:=f_2^1+f_2^2+f_2^3.
			\end{align*}
			
		Noting that $T(f_1,...,f_{j-1},1,f_{j+1},...,f_m)=0$, we can get the following result$$T(f_1,f_2)=T(f_1^2,f_2^2)+T(f_1^2,f_2^3)+T(f_1^3,f_2^2)+T(f_1^3,f_2^3).$$
		
			Pick a $z_0\in 6B\backslash 5B$ and set $c_2:=T(f_1^2,f_2^3)(z_0)+T(f_1^3,f_2^2)(z_0)+T(f_1^3,f_2^3)(z_0)$. Then 
			\begin{align*}
				\frac{1}{\mu(B)}\int_{B}\left|T(f_1,f_2)(z)-c_2\right|\d {\mu(z)}&\lesssim \frac{1}{\mu(B)}\int_{B}\left|T(f_1^2,f_2^2)(z)\right|\d {\mu(z)}\\
				&\quad+\frac{1}{\mu(B)}\int_{B}\left|T(f_1^2,f_2^3)(z)-T(f_1^2,f_2^3)(z_0)\right|\d {\mu(z)}\\
				&\quad+\frac{1}{\mu(B)}\int_{B}\left|T(f_1^3,f_2^2)(z)-T(f_1^3,f_2^2)(z_0)\right|\d {\mu(z)}\\
				&\quad+\frac{1}{\mu(B)}\int_{B}\left|T(f_1^3,f_2^3)(z)-T(f_1^3,f_2^3)(z_0)\right|\d {\mu(z)}\\
				&:=I+II+III+IV.
			\end{align*}
			
		According to Theorem \ref{Theorem 2.2}, we  choose $p_1, p_2 >\max\{p_0', 2\}$, and $1/p=1/p_1+1/p_2$, such that $T$ is bounded from $L^{p_1}(\mathcal{X})\times L^{p_2}(\mathcal{X})$ to $L^p(\mathcal{X})$, then by applying H\"{o}lder's inequality and (\ref{1.3}), we obtain 
			\begin{align*}
				I&\lesssim \left(\frac{1}{\mu(B)}\int_{B}\left|T(f_1^2,f_2^2)(z)\right|^p\d {\mu(z)}\right)^{\frac{1}{p}}\\
				&\lesssim \mu(B)^{-1/p}\left\|f_1^2\right\|_{L^{p_1}(\mathcal{X})}\left\|f_2^2\right\|_{L^{p_2}(\mathcal{X})}\\
				&\lesssim \mu(B)^{-1/p}\mu(32B)^{1/p}\prod\limits_{j=1}^2\left(\frac{1}{\mu(32B)}\int_{32B}|f_j(y_j)-(f_j)_{32B}|^{p_j}\d {\mu(z)}\right)^{\frac{1}{p_j}}\\
				&\lesssim \left\|f_1\right\|_{BMO(\mathcal{X})}\left\|f_2\right\|_{BMO(\mathcal{X})}.
			\end{align*}
			
		Let $\vec{z}_0:=(z_0,z_0)$. For $z\in B$, $z_0\in 6B\backslash 5B$ and  $y_2\in (32B)^c$, there are  $1\leq 4r_B \leq d(z,z_0) \leq 7r_B$ and $\rho(\vec{y},\vec{z}_0)\geq d(y_2,z_0)\geq 2d(z,z_0)$. By H\"{o}lder's inequality, (i) of  Definition \ref{definition1.5}, Lemma \ref{Lemma2.7}, (\ref{1.3})  and (\ref{1.5}),  we have,
			\begin{align*}
				&\quad\left|T(f_1^2,f_2^3)(z)-T(f_1^2,f_2^3)(z_0)\right|\\
				&\lesssim \int_{\mathcal{X}^2}|K(z,y_1,y_2)-K(z_0,y_1,y_2)||f_1^2(y_1)||f_2^3(y_2)|\d {\mu(y_1)}\d {\mu(y_2)}\\
				&\lesssim \sum\limits_{k=1}^{\infty}\left(\int_{2^kd(z,z_0)\leq \rho(\vec{y},\vec{z}_0)\le 2^{k+1}d(z,z_0)}|K(z,y_1,y_2)-K(z_0,y_1,y_2)|^{p_0}\d {\mu(y_1)}\d {\mu(y_2)}\right)^{\frac{1}{p_0}}\\
				&\quad\times \left(\int_{2^kd(z,z_0)\leq \rho(\vec{y},\vec{z}_0)\le 2^{k+1}d(z,z_0)}|f_1^2(y_1)f_2^3(y_2)|^{p_0'}\d {\mu(y_1)}\d {\mu(y_2)}\right)^{\frac{1}{p_0'}}\\
				&\lesssim \sum\limits_{k=1}^{\infty}d(z,z_0)^{\varepsilon-\left(\frac{2n }{p_0'}+\frac{\varepsilon}{\alpha}\right)}2^{-k\left(\frac{2n}{p_0'}+\frac{\varepsilon}{\alpha}\right)}\prod\limits_{j=1}^2\left(\int_{B\left(z_0,2^{k+1}d(z,z_0)\right)}|f_j(y_j)-(f_j)_{32B}|^{p_0'}\d {\mu(y_j)}\right)^{\frac{1}{p_0'}}\\
				&\lesssim \sum\limits_{k=1}^{\infty}d(z,z_0)^{\varepsilon-\left(\frac{2n }{p_0'}+\frac{\varepsilon}{\alpha}\right)}2^{-k\left(\frac{2n}{p_0'}+\frac{\varepsilon}{\alpha}\right)}\prod\limits_{j=1}^2\left(\int_{2^{k+5}B}|f_j(y_j)-(f_j)_{32B}|^{p_0'}\d {\mu(y_j)}\right)^{\frac{1}{p_0'}}\\
				&\lesssim \sum\limits_{k=1}^{\infty}d(z,z_0)^{\varepsilon-\left(\frac{2n }{p_0'}+\frac{\varepsilon}{\alpha}\right)}2^{-k\left(\frac{2n}{p_0'}+\frac{\varepsilon}{\alpha}\right)}\left(1+\left|\ln\frac{2^{k+5}r_B}{32r_B}\right|\right)^2\mu\left(2^{k+5}B\right)^{\frac{2}{p_0'}}\prod\limits_{j=1}^2\left\|f_j\right\|_{BMO(\mathcal{X})}\\
				&\lesssim  \sum\limits_{k=1}^{\infty}r_B^{\varepsilon-\frac{\varepsilon}{\alpha}}2^{-\frac{k\varepsilon}{\alpha}}k^2\prod\limits_{j=1}^2\left\|f_j\right\|_{BMO(\mathcal{X})}\\
				&\lesssim \sum\limits_{k=1}^{\infty}k^22^{-\frac{k\varepsilon}{\alpha}}\prod\limits_{j=1}^2\left\|f_j\right\|_{BMO(\mathcal{X})}\\
				&\lesssim \left\|f_1\right\|_{BMO(\mathcal{X})}\left\|f_2\right\|_{BMO(\mathcal{X})}.
			\end{align*}
			
			 Then we can get that $$II\lesssim \left\|f_1\right\|_{BMO(\mathcal{X})}\left\|f_2\right\|_{BMO(\mathcal{X})}.$$
			The estimates for $III$ and $IV$ are similar to the case of $II$, and we  omit the detalis.\\
			Case 2: $0< r_B < \frac{1}{4}$.\\
			
			 Set $ \tilde{B}: = B(z_B,r_B^{\alpha})$, 
			\begin{align*}
				&f_1=(f_1)_{16\tilde{B}}+ (f_1-(f_1)_{16\tilde{B}})\chi_{16\tilde{B}} + (f_1-(f_1)_{16\tilde{B}})\chi_{(16\tilde{B})^c}:=\tilde{f}_1^1+\tilde{f}_1^2+\tilde{f}_1^3\\
				&f_2=(f_2)_{16\tilde{B}}+ (f_2-(f_2)_{16\tilde{B}})\chi_{16\tilde{B}} + (f_2-(f_2)_{16\tilde{B}})\chi_{(16\tilde{B})^c}:=\tilde{f}_2^1+\tilde{f}_2^2+\tilde{f}_2^3.
			\end{align*}
			
			Noting that $T(f_1,...,f_{j-1},1,f_{j+1},...,f_m)=0$, we can get the following result$$T(f_1,f_2)=T(\tilde{f}_1^2,\tilde{f}_2^2)+T(\tilde{f}_1^2,\tilde{f}_2^3)+T(\tilde{f}_1^3,\tilde{f}_2^2)+T(\tilde{f}_1^3,\tilde{f}_2^3).$$
			
			Pick a $u_0\in 3B\backslash 2B$ and set $c_3:=T(\tilde{f}_1^2,\tilde{f}_2^3)(u_0)+T(\tilde{f}_1^3,\tilde{f}_2^2)(u_0)+T(\tilde{f}_1^3,\tilde{f}_2^3)(u_0)$. Then 
			\begin{align*}
				\frac{1}{\mu(B)}\int_{B}\left|T(f_1,f_2)(z)-c_3\right|\d {\mu(z)}&\lesssim \frac{1}{\mu(B)}\int_{B}\left|T(\tilde{f}_1^2,\tilde{f}_2^2)(z)\right|\d {\mu(z)}\\
				&\quad+\frac{1}{\mu(B)}\int_{B}\left|T(\tilde{f}_1^2,\tilde{f}_2^3)(z)-T(\tilde{f}_1^2,\tilde{f}_2^3)(u_0)\right|\d {\mu(z)}\\
				&\quad+\frac{1}{\mu(B)}\int_{B}\left|T(\tilde{f}_1^3,\tilde{f}_2^2)(z)-T(\tilde{f}_1^3,\tilde{f}_2^2)(u_0)\right|\d {\mu(z)}\\
				&\quad+\frac{1}{\mu(B)}\int_{B}\left|T(\tilde{f}_1^3,\tilde{f}_2^3)(z)-T(\tilde{f}_1^3,\tilde{f}_2^3)(u_0)\right|\d {\mu(z)}\\
				&:=\tilde{I}+\tilde{II}+\tilde{III}+\tilde{IV}.
			\end{align*}
			
			By Lemma \ref{Lemma2.1},  (iii) of Definition \ref{definition1.5}, (\ref{1.3}), (\ref{1.6}), and H\"{o}lder's inequality, we have 
			\begin{align*}
				\tilde{I}&\lesssim \mu(B)^{-1/q}\left\|T(\tilde{f}_1^2,\tilde{f}_2^2)\right\|_{L^{q,\infty}(B)}\\
				&\lesssim \mu(B)^{-1/q}\left\|\tilde{f}_1^2\right\|_{L^{l_1}(\mathcal{X})}\left\|\tilde{f}_2^2\right\|_{L^{l_2}(\mathcal{X})}\\
				&=\mu(B)^{-1/q}\mu(16\tilde{B})^{1/l}\prod\limits_{j=1}^2\left(\frac{1}{\mu(16\tilde{B})}\int_{16\tilde{B}}|f_j(y_j)-(f_j)_{16\tilde{B}}|^{l_j}\d {\mu(y_j)}\right)^{\frac{1}{l_j}}\\
				&\lesssim r_B^{\alpha\kappa/l-n/q}\left\|f_1\right\|_{BMO_{l_1}(\mathcal{X})}\left\|f_2\right\|_{BMO_{l_2}(\mathcal{X})}\\
				&\lesssim \left\|f_1\right\|_{BMO(\mathcal{X})}\left\|f_2\right\|_{BMO(\mathcal{X})}.	
			\end{align*}
			
			 Let $\vec{u}_0:=(u_0,u_0)$. For $z\in B, u_0\in 3B\backslash 2B$ and  $y_2\in (16\tilde{B})^c$, there are  $r_B \leq d(z,u_0) \leq 4r_B< 1$ and $\rho(\vec{y},\vec{u}_0)\geq d(y_2,u_0)\geq 2d(z,u_0)^{\alpha}$. From H\"{o}lder's inequality, (i) of Definition \ref{definition1.5}, Lemma \ref{Lemma2.7}, (\ref{1.3})  and (\ref{1.6}), we have 
			\begin{align*}
				&\quad\left|T(\tilde{f}_1^2,\tilde{f}_2^3)(z)-T(\tilde{f}_1^2,\tilde{f}_2^3)(u_0)\right|\\
				&\lesssim \int_{\mathcal{X}^2}|K(z,y_1,y_2)-K(u_0,y_1,y_2)||\tilde{f}_1^2(y_1)||\tilde{f}_2^3(y_2)|\d {\mu(y_1)}\d {\mu(y_2)}\\
				&\lesssim \sum\limits_{k=1}^{\infty}\left(\int_{2^kd(z,u_0)^{\alpha}\leq \rho(\vec{y},\vec{u}_0)\le 2^{k+1}d(z,u_0)^{\alpha}}|K(z,y_1,y_2)-K(u_0,y_1,y_2)|^{p_0}\d {\mu(y_1)}\d {\mu(y_2)}\right)^{\frac{1}{p_0}}\\
				&\quad\times \left(\int_{2^kd(z,u_0)^{\alpha}\leq \rho(\vec{y},\vec{u}_0)\leq 2^{k+1}d(z,u_0)^{\alpha}}|\tilde{f}_1^2(y_1)\tilde{f}_2^3(y_2)|^{p_0'}\d {\mu(y_1)}\d {\mu(y_2)}\right)^{\frac{1}{p_0'}}\\
				&\lesssim \sum\limits_{k=1}^{\infty}d(z,u_0)^{\varepsilon-\alpha\left(\frac{2\kappa }{p_0'}+\frac{\varepsilon}{\alpha}\right)}2^{-k\left(\frac{2n}{p_0'}+\frac{\varepsilon}{\alpha}\right)}\prod\limits_{j=1}^2\left(\int_{B\left(u_0,2^{k+1}d(z,u_0)^{\alpha}\right)}|f_j(y_j)-(f_j)_{16\tilde{B}}|^{p_0'}\d {\mu(y_j)}\right)^{\frac{1}{p_0'}}\\
				&\lesssim \sum\limits_{k=1}^{\infty}d(z,u_0)^{\varepsilon-\alpha\left(\frac{2\kappa }{p_0'}+\frac{\varepsilon}{\alpha}\right)}2^{-k\left(\frac{2n}{p_0'}+\frac{\varepsilon}{\alpha}\right)}\prod\limits_{j=1}^2\left(\int_{2^{k+5}\tilde{B}}|f_j(y_j)-(f_j)_{16\tilde{B}}|^{p_0'}\d {\mu(y_j)}\right)^{\frac{1}{p_0'}}\\
				&\lesssim \sum\limits_{k=1}^{\infty}d(z,u_0)^{\varepsilon-\alpha\left(\frac{2\kappa }{p_0'}+\frac{\varepsilon}{\alpha}\right)}2^{-k\left(\frac{2n}{p_0'}+\frac{\varepsilon}{\alpha}\right)}\left(1+\left|\ln\frac{2^{k+5}r_B^{\alpha}}{16r_B^{\alpha}}\right|\right)^2\mu\left(2^{k+5}\tilde{B}\right)^{\frac{2}{p_0'}}\prod\limits_{j=1}^2\left\|f_j\right\|_{BMO(\mathcal{X})}\\
				&\lesssim \sum\limits_{k=1}^{\infty}k^22^{-\frac{k\varepsilon}{\alpha}}\prod\limits_{j=1}^2\left\|f_j\right\|_{BMO(\mathcal{X})}\\
				&\lesssim \left\|f_1\right\|_{BMO(\mathcal{X})}\left\|f_2\right\|_{BMO(\mathcal{X})}.
			\end{align*}
			
		Then we can get that $$\tilde{II}\lesssim \left\|f_1\right\|_{BMO(\mathcal{X})}\left\|f_2\right\|_{BMO(\mathcal{X})}.$$
		
			The estimates for $\tilde{III}$ and $\tilde{IV}$ are similar to the case of $\tilde{II}$, and we also omit their details.
		Combining estimates from both cases, we have $$\left\|T(f_1,f_2)\right\|_{BMO(\mathcal{X})}\sim \sup\limits_{B}\inf\limits_{a\in \mathbb{C}}\frac{1}{\mu(B)}\int_B\left|T(f_1,f_2)(z)-a\right|\d {\mu(z)} \leq C\left\|f_1\right\|_{BMO(\mathcal{X})}\left\|f_2\right\|_{BMO(\mathcal{X})}.$$
		This completes the proof of the theorem.
		
	\end{proof}
	
	\section{Boundedness on Generalized Weighted Morrey Spaces}\label{sec4}

	\qquad First, we recall some essential definitions  and introduce the notations that will be used later. Then, we establish the boundedness of the $m$-linear strongly singular integral operator with  generalized kernel on generalized  weighted Morrey spaces.

	\begin{definition}\label{definition4.1}{\rm(see \cite{J2020})}
		Let $p\in [1,\infty)$, $w$ be a   weight, and $\varphi$ be a positive and increasing function on $\mathbb{R}^+$. Then  the generalized weight  Morrey 	space $L^{p,\varphi} (\mathcal{X},w)$   is defined by setting$$L^{p,\varphi}(\mathcal{X},w):=\{f\in L_{loc}^p(\mathcal{X}) : \left\|f\right\|_{L^{p,\varphi}(\mathcal{X},w)}<\infty\},$$
		where $$\left\|f\right\|_{L^{p,\varphi}(\mathcal{X},w)}:= \sup\limits_{B}\left(\frac{1}{\varphi(w(B))}\int_{B}|f(x)|^pw(x)\d {\mu(x)}\right)^{1/p},$$
		and the supremum is taken over all balls $B$ in $\mathcal{X}$.
	\end{definition}
	
	\begin{remark}
		\begin{itemize}
			\item[\rm (i)] If $\mathcal{X}= \mathbb{R}^n$, the generalized weight  Morrey 	space is introduced	by Komori and  Shiral \cite{Y2009}. When $w\equiv 1$, $L^{p,\varphi}(\mathbb{R}^n)$ is just the generalized Morrey space in \cite{Y2009}.
			\item[\rm (ii)] When $\varphi(t)= t^k $ with $0<k<1$, then $L^{p,\varphi}(\mathcal{X},w)= L^{p,k}(\mathcal{X},w)$ is  the weight Morrey space on \text{\rm RD}-spaces \cite{V2017}.  When $\varphi \equiv 1$, then $L^{p,\varphi}(\mathcal{X},w)=L^p(\mathcal{X},w)$.
		\end{itemize}

	\end{remark}
	
	\begin{lemma}\label{Lemma4.1}
			Let $\varphi$ be a positive and increasing function, $1\leq r< p<\infty$ and  $w\in A_{\infty}$. For any $\delta \in (0,r)$, there exists a  $C> 0 $ such that, for any $f \in L^{r,\infty}(\mathcal{X})$,
			$$\left\|f\right\|_{L^{p,\varphi}(\mathcal{X},w)} \leq C\left\|M_{\delta}^{\sharp}(f)\right\|_{L^{p,\varphi}(\mathcal{X},w)}.$$
			
	\end{lemma}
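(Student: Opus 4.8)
The plan is to follow the scheme of the proof of Lemma~\ref{Lemma2.6} almost verbatim, replacing the weighted Lebesgue norm $\|\cdot\|_{L^{p}(\mathcal{X},w)}$ by the Morrey norm $\|\cdot\|_{L^{p,\varphi}(\mathcal{X},w)}$ at every step, so that the whole statement collapses onto a Morrey-space analogue of the Fefferman--Stein inequality of Lemma~\ref{Lemma2.5}. First I would record the two elementary reductions. Since $f\in L^{r,\infty}(\mathcal{X})$ gives $|f|^{\delta}\in L_{loc}^{1}(\mathcal{X})$ for every $\delta\in(0,r)$, the Lebesgue differentiation theorem on $\mathcal{X}$ yields $|f|\le[M(|f|^{\delta})]^{1/\delta}$ a.e., whence
$$\|f\|_{L^{p,\varphi}(\mathcal{X},w)}\le\big\|M(|f|^{\delta})\big\|_{L^{p/\delta,\varphi}(\mathcal{X},w)}^{1/\delta}.$$
On the other hand the scaling identity $M_{\delta}^{\sharp}(f)=[M^{\sharp}(|f|^{\delta})]^{1/\delta}$ gives $\big\|M^{\sharp}(|f|^{\delta})\big\|_{L^{p/\delta,\varphi}(\mathcal{X},w)}^{1/\delta}=\|M_{\delta}^{\sharp}(f)\|_{L^{p,\varphi}(\mathcal{X},w)}$. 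Writing $h:=|f|^{\delta}$ and $q:=p/\delta$, so that $1<r/\delta<q$, the lemma is reduced to the single inequality
$$\|M(h)\|_{L^{q,\varphi}(\mathcal{X},w)}\le C\,\|M^{\sharp}(h)\|_{L^{q,\varphi}(\mathcal{X},w)}\qquad(w\in A_{\infty}).$$

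For this Morrey Fefferman--Stein inequality I would argue ball by ball: by the definition of the Morrey norm it suffices to bound $\frac{1}{\varphi(w(B))}\int_{B}(Mh)^{q}w\,\d{\mu}$ by $C\|M^{\sharp}(h)\|_{L^{q,\varphi}(\mathcal{X},w)}^{q}$ uniformly in $B$. Exactly as in Lemma~\ref{Lemma2.6} I would truncate the weight, setting $w_{N}:=\min\{w,N\}\in A_{\infty}$ and using Fatou's lemma to reduce to $w_{N}$; the boundedness of $w_{N}$ together with $f\in L^{r,\infty}(\mathcal{X})$ guarantees $M(h)\in L^{r/\delta,\infty}(\mathcal{X},w_{N})$, which is precisely the hypothesis under which the good-$\lambda$/absorption argument underlying Lemma~\ref{Lemma2.5} is valid. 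Fixing $B$, I would feed the localized good-$\lambda$ inequality for $A_{\infty}$ weights,
$$w_{N}\big(\{x\in B:Mh(x)>2\lambda,\ M^{\sharp}h(x)\le\gamma\lambda\}\big)\le C\gamma^{\theta}\,w_{N}\big(\{x\in B:Mh(x)>\lambda\}\big),$$
into the layer-cake formula for $\int_{B}(Mh)^{q}w_{N}\,\d{\mu}$, choose $\gamma$ so small that the $\int_{B}(Mh)^{q}w_{N}$ term produced on the right carries a coefficient below $1/2$, absorb it, and reach $\int_{B}(Mh)^{q}w_{N}\le C\int_{B}(M^{\sharp}h)^{q}w_{N}\le C\,\varphi(w(B))\,\|M^{\sharp}h\|_{L^{q,\varphi}(\mathcal{X},w)}^{q}$. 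Dividing by $\varphi(w(B))$, taking the supremum over $B$, and letting $N\to\infty$ then finishes the proof.

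The main obstacle is keeping the \emph{same} ball $B$ on both sides of the good-$\lambda$ inequality, which is indispensable so that only $\varphi(w(B))$, and not $\varphi(w(cB))$ for some dilate, appears at the end; this matters here because $\varphi$ is only assumed positive and increasing and need not satisfy any doubling condition that would let me pass from $\varphi(w(cB))$ back to $\varphi(w(B))$. Since the balls realizing $Mh(x)>\lambda$ for $x\in B$ may protrude far outside $B$, I would secure the localized good-$\lambda$ inequality via the standard local-global splitting $h=h\chi_{c_{0}B}+h\chi_{(c_{0}B)^{c}}$: the tail $M(h\chi_{(c_{0}B)^{c}})$ is essentially constant on $B$ and is dominated there by $M^{\sharp}h$, so it is absorbed into the sharp-maximal term, while the Calder\'{o}n--Zygmund/Whitney covering used for the local part stays inside a fixed dilate of $B$ and is transferred from $\mu$ to $w_{N}$ by the $A_{\infty}$ property. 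Verifying this localized good-$\lambda$ inequality and checking that the tail is genuinely controlled by the sharp maximal function is the one truly technical point; everything else is the bookkeeping already carried out in Lemma~\ref{Lemma2.6}.
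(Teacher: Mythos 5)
Your reduction to a Morrey--space Fefferman--Stein inequality is not how the paper proceeds, and as outlined it has a genuine gap. The paper's proof is essentially three lines: for each fixed ball $B$ it observes that $w\chi_B\in A_{\infty}$ and applies the \emph{already proven} global Lemma \ref{Lemma2.6}(i) to the weight $w\chi_B$, giving
$\int_B|f|^pw\,\d{\mu}=\int_{\mathcal X}|f|^p\,w\chi_B\,\d{\mu}\le C\int_{\mathcal X}|M_{\delta}^{\sharp}(f)|^p\,w\chi_B\,\d{\mu}=C\int_B|M_{\delta}^{\sharp}(f)|^pw\,\d{\mu}$,
after which one divides by $\varphi(w(B))$ and takes the supremum over $B$. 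No new good-$\lambda$ inequality, no truncation of $\varphi$, and no local--global splitting of $h$ is needed; the entire localization is absorbed into the choice of weight. (Whether $w\chi_B$ really lies in $A_\infty$ with uniform constants is itself a point the paper does not address, but that is the paper's route.)

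The concrete problems with your plan are these. First, the localized good-$\lambda$ inequality you write down, with the \emph{same} ball $B$ on both sides, is not the standard one and is not justified: the Whitney/Calder\'on--Zygmund cubes of the global open set $\{Mh>\lambda\}$ that meet $B$ need not be contained in any fixed dilate of $B$, so the right-hand measure cannot simply be restricted to $B$. Second, and more seriously, your proposed treatment of the tail is false: $M(h\chi_{(c_0B)^c})$ is \emph{not} dominated on $B$ by $M^{\sharp}h$. For $h=c+\chi_E$ with $c>0$ and $E$ far from $B$, one has $M(h\chi_{(c_0B)^c})\ge c$ on $B$ while $M^{\sharp}h=M^{\sharp}\chi_E$ can be made arbitrarily small; the sharp function is blind to additive constants, which is exactly why Fefferman--Stein-type inequalities on Morrey spaces require extra care (and why the hypothesis $f\in L^{r,\infty}(\mathcal X)$ appears). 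In the classical good-$\lambda$ argument the tail over a Whitney cube is controlled by $\lambda$ itself, using a point of the complement of $\{Mh>\lambda\}$ adjacent to that cube --- not by $M^{\sharp}h$ --- and your outline conflates these two mechanisms. As written, the ``one truly technical point'' you defer is precisely the step that fails, so the argument does not close.
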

	
	\begin{proof}
		For any $B\subset \mathcal{X}$, note that $w\chi_B(y)\in  A_{\infty}$. From Lemma \ref{Lemma2.6}, we have 
		\begin{align*}
			&\quad\left(\frac{1}{\varphi(w(B))}\int_{B}|f(y)|^pw(y)\d {\mu(y)}\right)^{1/p}\\
			&=\left(\frac{1}{\varphi(w(B))}\int_{\mathcal{X}}|f(y)|^pw\chi_B(y)\d {\mu(y)}\right)^{1/p}\\
			&\leq \left(\frac{C}{\varphi(w(B))}\int_{\mathcal{X}}|M_{\delta}^{\sharp}(f)(y)|^pw\chi_B(y)\d {\mu(y)}\right)^{1/p}\\
			&=\left(\frac{C}{\varphi(w(B))}\int_{B}|M_{\delta}^{\sharp}(f)(y)|^pw(y)\d {\mu(y)}\right)^{1/p}\\
			&\leq C\left\|M_{\delta}^{\sharp}(f)\right\|_{L^{p,\varphi}(\mathcal{X},w)}.
		\end{align*}
		By taking the supremum of $B \subset \mathcal{X}$, the proof will be completed. 
	\end{proof}
	
	\begin{lemma}\label{Lemma4.2}{\rm(see \cite{L2014})}
		Let $w\in A_{\infty}$. Then for all balls $B\subset \mathcal{X}$, the following reverse Jensen inequality holds. $$\int_{B}w(x)\d {\mu(x)}\leq C\mu(B)\cdot \displaystyle  e^{\displaystyle \left(\frac{1}{\mu(B)}\int_{B}\log{w(x)}\d {\mu(x)}\right)}.$$
	\end{lemma}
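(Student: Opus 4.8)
The plan is to exploit the definition $A_\infty=\bigcup_{1\le p<\infty}A_p$ directly, so I first fix $p\in(1,\infty)$ with $w\in A_p$ (possible since $A_1\subset A_p$) and record the elementary comparison that the $A_p$ condition forces between $\mu$ and $w(E):=\int_E w\,\d{\mu}$. For any ball $B$ and measurable $E\subset B$, writing $\mu(E)=\int_E w^{1/p}w^{-1/p}\,\d{\mu}$ and applying H\"older's inequality together with the $A_p$ condition of Definition \ref{definition2.1} gives
$$\frac{\mu(E)}{\mu(B)}\le [w]_{A_p}^{1/p}\left(\frac{w(E)}{w(B)}\right)^{1/p},$$
with a constant depending only on the $A_p$ characteristic of $w$, hence uniform in $B$. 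This self-improving comparison is the only structural input about $w$ that the argument will need.

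Next I would normalize and reformulate. Fix a ball $B$, set $\langle w\rangle_B:=\frac{1}{\mu(B)}\int_B w\,\d{\mu}$ and $u:=w/\langle w\rangle_B$, so that $\langle u\rangle_B=1$ and $w(B)=\langle w\rangle_B\,\mu(B)$. Taking logarithms, the desired estimate $\langle w\rangle_B\le C\,e^{\langle\log w\rangle_B}$ is equivalent to $\langle\log(1/u)\rangle_B\le\log C$, that is, to showing that the average of $\log(1/u)$ over $B$ is bounded by a constant independent of $B$. Since $\log(1/u)\le\log^+(1/u)$ pointwise, it suffices to control $\frac{1}{\mu(B)}\int_B\log^+(1/u)\,\d{\mu}$.

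The heart of the matter is then a layer-cake estimate fed by the comparison inequality above. For $t>0$ put $E_t:=\{x\in B:u(x)<e^{-t}\}=\{x\in B:w(x)<e^{-t}\langle w\rangle_B\}$; the distribution-function identity gives $\frac{1}{\mu(B)}\int_B\log^+(1/u)\,\d{\mu}=\int_0^\infty\frac{\mu(E_t)}{\mu(B)}\,\d t$. On $E_t$ one has $w<e^{-t}\langle w\rangle_B$ pointwise, whence $w(E_t)\le e^{-t}\langle w\rangle_B\,\mu(E_t)\le e^{-t}w(B)$, so $w(E_t)/w(B)\le e^{-t}$. Inserting this into the first-step comparison yields $\mu(E_t)/\mu(B)\le [w]_{A_p}^{1/p}e^{-t/p}$. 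Combining with the trivial bound $\mu(E_t)/\mu(B)\le1$ and integrating, $\int_0^\infty\min\{1,[w]_{A_p}^{1/p}e^{-t/p}\}\,\d t<\infty$, which is exactly the uniform upper bound $\log C$ sought; unwinding the normalization $u=w/\langle w\rangle_B$ then gives $\log\langle w\rangle_B-\langle\log w\rangle_B\le\log C$, i.e. the reverse Jensen inequality.

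The individual computations are routine, so there is no serious analytic obstacle; the one point that must be watched is uniformity of the constant, namely that the comparison inequality in the first step depends on $w$ only through $[w]_{A_p}$ and not on $B$, which is precisely what makes the final integral a fixed constant independent of the ball. The doubling property of $\mu$ on the \text{\rm RD}-space enters only through the background $A_p$ machinery and is not otherwise needed for this particular estimate.
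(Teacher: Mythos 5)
The paper does not prove this lemma; it is quoted from the reference \cite{L2014}, so there is no in-paper argument to compare against. Your proposal is the standard proof of the reverse Jensen inequality for $A_\infty$ weights (fix $p$ with $w\in A_p$, derive the comparison $\mu(E)/\mu(B)\lesssim (w(E)/w(B))^{1/p}$ via H\"older and the $A_p$ condition, then run the layer-cake estimate on $\log^+(\langle w\rangle_B/w)$), and it is correct, with the constant depending only on $[w]_{A_p}$ and $p$ as you note; it carries over to the RD-space setting verbatim since it uses nothing beyond the $A_p$ condition itself. The only point you gloss over is that $\int_B\log^+ w\,\d\mu<\infty$ (needed so that $\langle\log w\rangle_B$ is well defined and the final rearrangement is legitimate), but this is immediate from $\log^+ w\le w$ and the local integrability of $w$.
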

	
	\begin{lemma}\label{Lemma4.3}
		Let $p_1,...,p_m\in [1,\infty)$ with $1/p=1/p_1+\cdots+1/p_m$. Assume that $w_1,...,w_m\in A_{\infty}$ and $\nu_{\vec{w}}:=\prod\limits_{j=1}^mw_j^{p/p_j}$, then there exists a constant $C>0$ such that for any ball $B$, $$\prod\limits_{j=1}^m\left(\int_{B}w_j(x)\d {\mu(x)}\right)^{p/p_j}\leq C \int_{B}\nu_{\vec{w}}(x)\d {\mu(x)}.$$
	\end{lemma}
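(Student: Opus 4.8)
The plan is to combine the reverse Jensen inequality of Lemma~\ref{Lemma4.2} with the ordinary Jensen inequality, exploiting the multiplicative structure of $\nu_{\vec{w}}=\prod_{j=1}^m w_j^{p/p_j}$ to pass from a product of weighted geometric means to the geometric mean of the product. The only facts I would use are that each $w_j\in A_\infty$ guarantees $\log w_j$ (and hence $\log\nu_{\vec{w}}$) is locally integrable, so that both Jensen-type inequalities apply, together with the exponent bookkeeping $\sum_{j=1}^m 1/p_j=1/p$.

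First I would apply Lemma~\ref{Lemma4.2} to each individual weight $w_j\in A_\infty$, obtaining for every ball $B$
$$\int_{B}w_j(x)\d{\mu(x)}\leq C\,\mu(B)\,e^{\left(\frac{1}{\mu(B)}\int_{B}\log w_j(x)\d{\mu(x)}\right)}.$$
Raising the $j$-th estimate to the power $p/p_j$ and multiplying over $j=1,\dots,m$ yields
$$\prod\limits_{j=1}^m\left(\int_{B}w_j(x)\d{\mu(x)}\right)^{p/p_j}\leq C\prod\limits_{j=1}^m\mu(B)^{p/p_j}\,e^{\left(\frac{p/p_j}{\mu(B)}\int_{B}\log w_j(x)\d{\mu(x)}\right)}.$$
Next I would collapse the right-hand side: since $\sum_{j=1}^m 1/p_j=1/p$, the powers of $\mu(B)$ combine to $\prod_{j=1}^m\mu(B)^{p/p_j}=\mu(B)^{p\sum_{j}1/p_j}=\mu(B)$, while the exponentials merge into a single exponential whose argument is $\frac{1}{\mu(B)}\int_{B}\log\big(\prod_{j=1}^m w_j^{p/p_j}\big)\d{\mu}=\frac{1}{\mu(B)}\int_{B}\log\nu_{\vec{w}}\,\d{\mu}$. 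This gives
$$\prod\limits_{j=1}^m\left(\int_{B}w_j(x)\d{\mu(x)}\right)^{p/p_j}\leq C\,\mu(B)\,e^{\left(\frac{1}{\mu(B)}\int_{B}\log\nu_{\vec{w}}(x)\d{\mu(x)}\right)}.$$

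Finally I would invoke the standard Jensen inequality for the convex function $\exp$ with respect to the normalized measure $\d{\mu}/\mu(B)$ on $B$, namely
$$e^{\left(\frac{1}{\mu(B)}\int_{B}\log\nu_{\vec{w}}(x)\d{\mu(x)}\right)}\leq \frac{1}{\mu(B)}\int_{B}\nu_{\vec{w}}(x)\d{\mu(x)}.$$
Multiplying by $\mu(B)$ absorbs the leading factor and produces exactly $C\int_{B}\nu_{\vec{w}}\,\d{\mu}$, which is the claimed bound. I do not expect a genuine obstacle in this argument: the proof is essentially a two-sided application of Jensen's inequality glued together by the definition of $\nu_{\vec{w}}$, and the only point deserving a remark is the local integrability of the logarithms, which is automatic for $A_\infty$ weights.
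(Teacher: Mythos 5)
Your argument is correct and is essentially identical to the paper's proof: both apply the reverse Jensen inequality of Lemma~\ref{Lemma4.2} to each $w_j$, combine the exponents using $\sum_{j=1}^m 1/p_j = 1/p$ so that the powers of $\mu(B)$ collapse to $\mu(B)$ and the exponentials merge into $\exp\bigl(\frac{1}{\mu(B)}\int_B \log\nu_{\vec{w}}\,\d\mu\bigr)$, and then finish with the ordinary Jensen inequality for $\exp$. No substantive difference.
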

	
	\begin{proof}
		Since $w_j\in A_{\infty}$, by Lemma \ref{Lemma4.2} and Jensen inequality (see \cite{L2014}), we have
		\begin{align*} 
			&\quad\prod\limits_{j=1}^m\left(\int_{B}w_j(x)\d {\mu(x)}\right)^{p/p_j}\\
			&\lesssim \prod\limits_{j=1}^m\left(\mu(B)\cdot e^{\displaystyle\left(\frac{1}{\mu(B)}\int_{B}\log{w_j(x)}\d {\mu(x)}\right)}\right)^{p/p_j}\\
			&\lesssim \mu(B)\prod\limits_{j=1}^m \displaystyle  e^{\displaystyle\left(\frac{1}{\mu(B)}\int_{B}\log{w_j(x)}^{p/p_j}\d {\mu(x)}\right)}\\
			&\lesssim \mu(B)e^{\displaystyle\left(\frac{1}{\mu(B)}\int_{B}\log\nu_{\vec{w}}(x)\d {\mu(x)}\right)}\\
			&\lesssim \int_{B}\nu_{\vec{w}}(x)\d {\mu(x)}.
		\end{align*}
	\end{proof}

	\begin{lemma}\label{Lemma4.4}
		Let $\varphi$, $\varphi_j$ be  positive and increasing functions on $\mathbb{R}^+$, and for all $t, t_j, k\in \mathbb{R}^{+}$, $j=1,...,m$,
		\begin{equation}\label{a}
			\prod\limits_{j=1}^m\varphi_j(t_j)\lesssim \varphi(\prod\limits_{j=1}^mt_j),\qquad \varphi(tk)\lesssim \varphi(t)\varphi(k),	\qquad \varphi_j(t)^k \lesssim \varphi_j(t^k).
		\end{equation}
		If $1\leq p_j <\infty$, $1/p=1/p_1+\cdots+1/p_m$, $p\geq 1$, $w_j\in A_{\infty}$  and $\nu_{\vec{w}}:=\prod\limits_{j=1}^mw_j^{p/p_j}$, $f_j\in L^{p_j,\varphi_j}(\mathcal{X},w_j)$, $j=1,...,m$, then $$\left\|\prod\limits_{j=1}^mf_j\right\|_{L^{p,\varphi}(\mathcal{X},\nu_{\vec{w}})}\leq C\prod\limits_{j=1}^m\left\|f_j\right\|_{L^{p_j,\varphi_j}(\mathcal{X},w_j)}.$$ 
	\end{lemma}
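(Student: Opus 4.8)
The plan is to fix an arbitrary ball $B\subset\mathcal{X}$, estimate the normalized local average that defines the $L^{p,\varphi}(\mathcal{X},\nu_{\vec{w}})$ norm of $\prod_{j=1}^m f_j$, and then take the supremum over $B$ at the end. Writing $\nu_{\vec{w}}=\prod_{j=1}^m w_j^{p/p_j}$ explicitly, the quantity to control is $\frac{1}{\varphi(\nu_{\vec{w}}(B))}\int_B\prod_{j=1}^m|f_j(x)|^p w_j(x)^{p/p_j}\,d\mu(x)$, so the whole argument reduces to a good pointwise-in-$B$ bound for the integral appearing here.

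First I would apply H\"older's inequality on $B$ with the exponents $p_j/p$. These are admissible because $\sum_{j=1}^m\frac{1}{p_j/p}=\sum_{j=1}^m\frac{p}{p_j}=1$, and the weight and the functions split cleanly since $(w_j^{p/p_j})^{p_j/p}=w_j$ and $(|f_j|^p)^{p_j/p}=|f_j|^{p_j}$. This yields $\int_B\prod_{j=1}^m|f_j|^p\,\nu_{\vec{w}}\,d\mu\le\prod_{j=1}^m\big(\int_B|f_j|^{p_j}w_j\,d\mu\big)^{p/p_j}$. Next, by the definition of the $L^{p_j,\varphi_j}(\mathcal{X},w_j)$ norm, each factor obeys $\int_B|f_j|^{p_j}w_j\,d\mu\le\varphi_j(w_j(B))\,\|f_j\|_{L^{p_j,\varphi_j}(\mathcal{X},w_j)}^{p_j}$, so raising to the power $p/p_j$ and multiplying separates the norms and leaves the purely geometric factor $\prod_{j=1}^m\varphi_j(w_j(B))^{p/p_j}$ to be absorbed into $\varphi(\nu_{\vec{w}}(B))$.

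The hard part will be precisely this last absorption, which is where the three structural hypotheses \eqref{a} on $\varphi,\varphi_j$ are used in sequence. I would first invoke $\varphi_j(t)^k\lesssim\varphi_j(t^k)$ with $k=p/p_j$ to pull the exponent inside, giving $\varphi_j(w_j(B))^{p/p_j}\lesssim\varphi_j(w_j(B)^{p/p_j})$; then the submultiplicative-type bound $\prod_{j=1}^m\varphi_j(t_j)\lesssim\varphi(\prod_{j=1}^m t_j)$ applied with $t_j=w_j(B)^{p/p_j}$ collapses the product into $\varphi\big(\prod_{j=1}^m w_j(B)^{p/p_j}\big)$. At this point Lemma \ref{Lemma4.3} provides the key comparison $\prod_{j=1}^m w_j(B)^{p/p_j}\le C\,\nu_{\vec{w}}(B)$, and since $\varphi$ is increasing while $\varphi(tk)\lesssim\varphi(t)\varphi(k)$, the constant $C$ is harmless: $\varphi\big(\prod_{j=1}^m w_j(B)^{p/p_j}\big)\le\varphi(C\nu_{\vec{w}}(B))\lesssim\varphi(C)\varphi(\nu_{\vec{w}}(B))\lesssim\varphi(\nu_{\vec{w}}(B))$.

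Combining these estimates gives $\int_B\prod_{j=1}^m|f_j|^p\,\nu_{\vec{w}}\,d\mu\lesssim\varphi(\nu_{\vec{w}}(B))\prod_{j=1}^m\|f_j\|_{L^{p_j,\varphi_j}(\mathcal{X},w_j)}^{p}$; dividing by $\varphi(\nu_{\vec{w}}(B))$, taking the $p$-th root, and passing to the supremum over all balls $B$ then completes the proof. The one point requiring care is bookkeeping: one must verify that the exponents $p/p_j$ match consistently across the H\"older step and each of the three $\varphi$-inequalities, since a mismatch there is the only way the argument could fail.
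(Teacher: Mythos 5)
Your proposal is correct and follows essentially the same route as the paper's proof: H\"older's inequality on $B$ with exponents $p_j/p$, followed by the absorption of $\prod_{j=1}^m\varphi_j(w_j(B))^{p/p_j}$ into $\varphi(\nu_{\vec{w}}(B))$ via the three inequalities in \eqref{a} together with Lemma \ref{Lemma4.3} and the monotonicity of $\varphi$. The only cosmetic difference is that you bound each local integral by $\varphi_j(w_j(B))\|f_j\|_{L^{p_j,\varphi_j}(\mathcal{X},w_j)}^{p_j}$ before taking the supremum, whereas the paper distributes the supremum over the factors at the end; the two are equivalent.
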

	
	\begin{proof}   Note that $\varphi$ be a positive and increasing function on $\mathbb{R}^+$, by Lemma \ref{Lemma4.3} and (\ref{a}), we have
		$$\varphi\left(\prod\limits_{j=1}^mw_j(B)^{p/p_j}\right)\leq \varphi\left(C\nu_{\vec{w}}(B)\right)\lesssim \varphi\left(\nu_{\vec{w}}(B)\right).$$
		Then, by H\"{o}lder's inequality and (\ref{a}), we have 
		\begin{align*}
			\left\|\prod\limits_{j=1}^mf_j\right\|_{L^{p,\varphi}(\nu_{\vec{w}})}&=\sup\limits_{B}\left(\frac{1}{\varphi(\nu_{\vec{w}}(B))}\int_{B}\left|\prod\limits_{j=1}^mf_j(x)\right|^p\nu_{\vec{w}}(x)\d {\mu(x)}\right)^{1/p}\\
			&\lesssim\sup\limits_{B}\frac{1}{\prod\limits_{j=1}^m\varphi_j\left(w_j(B)^{p/p_j}\right)^{1/p}}\prod\limits_{j=1}^m\left(\int_{B}|f_j(x)|^{p_j}w_j(x)\d {\mu(x)}\right)^{1/p_j}\\
			&\lesssim\sup\limits_{B}\frac{1}{\prod\limits_{j=1}^m\varphi_j\left(w_j(B)\right)^{1/p_j}}\prod\limits_{j=1}^m\left(\int_{B}|f_j(x)|^{p_j}w_j(x)\d {\mu(x)}\right)^{1/p_j}\\
			&\lesssim\prod\limits_{j=1}^m\sup\limits_{B}\left(\frac{1}{\varphi_j(w_j(B))}\int_{B}|f_j(x)|^{p_j}w_j(x)\d {\mu(x)}\right)^{1/p_j}\\
			&=C\prod\limits_{j=1}^m\left\|f_j\right\|_{L^{p_j,\varphi_j}(w_j)},
		\end{align*}
		where $C$ is independent of $f_j$.
	\end{proof}

	\begin{lemma}\label{Lemma4.5}{\rm(see \cite{J2020})}
		Let $p>1$, $w\in A_p$ and  $\varphi$ be a positive and increasing function on $\mathbb{R}^+$. Suppose there exists a constant $c$ such that for any $s\geq t>0$,  \begin{equation}\label{b}
			\frac{\varphi(s)}{s}\leq c \frac{\varphi(t)}{t}.
		\end{equation}
		Then  there exists a positive constant $C$ such that, for any $f\in L^{p,\varphi}(\mathcal{X},w)$,
		$$\left\|Mf\right\|_{L^{p,\varphi}(\mathcal{X},w)} \leq C\left\|f\right\|_{L^{p,\varphi}(\mathcal{X},w)}.$$ 
	\end{lemma}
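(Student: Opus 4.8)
The plan is to reduce the Morrey estimate to the classical weighted $L^p$ bound for $M$ by localizing at a single ball and splitting off the tail. Fix an arbitrary ball $B=B(x_0,r_B)$; since the Morrey norm is a supremum over balls, it suffices to bound $\frac{1}{\varphi(w(B))}\int_B (Mf)^pw\,d\mu$ uniformly in $B$. I would write $f=f\chi_{2B}+f\chi_{(2B)^c}=:f_1+f_2$, so that $Mf\le Mf_1+Mf_2$ and, since $(a+b)^p\lesssim a^p+b^p$, it is enough to estimate the two pieces separately.

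For the local part $f_1$, I would invoke the fact that $w\in A_p$ makes $M$ bounded on $L^p(\mathcal{X},w)$, giving $\int_B(Mf_1)^pw\le\int_{\mathcal{X}}(Mf_1)^pw\lesssim\int_{2B}|f|^pw\le\varphi(w(2B))\,\|f\|_{L^{p,\varphi}(\mathcal{X},w)}^p$. The remaining task is to absorb $\varphi(w(2B))$ into $\varphi(w(B))$: because $w\in A_p\subset A_\infty$ is doubling, $w(2B)\lesssim w(B)$, and hypothesis \eqref{b} forces $\varphi$ to be quasi-doubling — taking $s=Cw(B)\ge t=w(B)$ in \eqref{b} yields $\varphi(Cw(B))\le cC\,\varphi(w(B))$ — so that $\varphi(w(2B))\lesssim\varphi(w(B))$ and the local contribution is controlled.

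The heart of the matter is the tail $f_2$. For $x\in B$, only balls $B'\ni x$ that meet $(2B)^c$ contribute to $Mf_2(x)$, and a short geometric argument shows that any such $B'$ has radius $\gtrsim r_B$ and hence satisfies $B\subset 5B'$. On such a $B'$ I would apply H\"older's inequality with the exponents $(p,p')$ against the weight, $\frac{1}{\mu(B')}\int_{B'}|f|\,d\mu\le\frac{1}{\mu(B')}\big(\int_{B'}|f|^pw\big)^{1/p}\big(\int_{B'}w^{1-p'}\big)^{1/p'}$, and use the $A_p$ condition to reduce $\big(\int_{B'}w^{1-p'}\big)^{1/p'}$ to $\lesssim\mu(B')\,w(B')^{-1/p}$. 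Together with $\int_{B'}|f|^pw\le\varphi(w(B'))\|f\|_{L^{p,\varphi}(\mathcal{X},w)}^p$ this gives $\frac{1}{\mu(B')}\int_{B'}|f|\,d\mu\lesssim\big(\varphi(w(B'))/w(B')\big)^{1/p}\|f\|_{L^{p,\varphi}(\mathcal{X},w)}$. Now condition \eqref{b} is used a second and decisive time: since $B\subset 5B'$ forces $w(B')\gtrsim w(B)$ (again by doubling), \eqref{b} gives $\varphi(w(B'))/w(B')\lesssim\varphi(w(B))/w(B)$, so the bound becomes independent of $B'$. Taking the supremum over admissible $B'$ yields $Mf_2(x)\lesssim\big(\varphi(w(B))/w(B)\big)^{1/p}\|f\|_{L^{p,\varphi}(\mathcal{X},w)}$ \emph{uniformly} for $x\in B$.

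Because the tail bound is constant on $B$, integrating gives $\int_B(Mf_2)^pw\lesssim\frac{\varphi(w(B))}{w(B)}\|f\|_{L^{p,\varphi}(\mathcal{X},w)}^p\,w(B)=\varphi(w(B))\|f\|_{L^{p,\varphi}(\mathcal{X},w)}^p$, that is, $\frac{1}{\varphi(w(B))}\int_B(Mf_2)^pw\lesssim\|f\|_{L^{p,\varphi}(\mathcal{X},w)}^p$. Adding the local and tail estimates and taking the supremum over all balls $B$ completes the proof. I expect the main obstacle to be the tail analysis: verifying the geometric claim that every contributing ball $B'$ satisfies $B\subset 5B'$ in the abstract $\mathrm{RD}$-space (where one cannot simply rescale as in $\mathbb{R}^n$), and, above all, exploiting the almost-monotonicity hypothesis \eqref{b} to trade $\varphi(w(B'))/w(B')$ for $\varphi(w(B))/w(B)$, which is precisely the structural condition that renders $M$ bounded on these generalized weighted Morrey spaces.
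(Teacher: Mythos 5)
The paper does not prove this lemma: it is quoted verbatim from \cite{J2020} (Chou, Li, Tong and Lin), so there is no in-paper argument to compare against. Judged on its own, your proof is the standard local/global decomposition argument for maximal operators on Morrey-type spaces, and it is essentially correct. The geometric claim you flag as a possible obstacle is in fact unproblematic in a general metric space: if $B'=B(y_0,r')$ contains some $x\in B=B(x_0,r_B)$ and meets $(2B)^c$, the triangle inequality gives $2r'>d(x,z)\ge d(z,x_0)-d(x,x_0)>r_B$, hence $r'>r_B/2$, and for any $u\in B$ one has $d(u,y_0)\le d(u,x_0)+d(x_0,x)+d(x,y_0)<2r_B+r'<5r'$, so $B\subset 5B'$; no rescaling is needed. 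The $A_p$ step reducing $\bigl(\int_{B'}w^{1-p'}\bigr)^{1/p'}$ to $\mu(B')\,w(B')^{-1/p}$ is exactly right since $p'(p-1)=p$. The one place you should tighten the write-up is the second application of \eqref{b}: doubling only gives $w(B')\ge w(B)/C$, not $w(B')\ge w(B)$, and \eqref{b} as stated requires $s\ge t$. The fix is one line: if $w(B')\ge w(B)$ apply \eqref{b} directly, while if $w(B)/C\le w(B')<w(B)$ use the monotonicity of $\varphi$ to get $\varphi(w(B'))/w(B')\le \varphi(w(B))/(w(B)/C)=C\,\varphi(w(B))/w(B)$; either way $\varphi(w(B'))/w(B')\lesssim \varphi(w(B))/w(B)$. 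The same remark applies (harmlessly) to your treatment of $\varphi(w(2B))$ in the local part, where you already invoke monotonicity correctly. With that small clarification the argument is complete.
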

	
	\begin{lemma}\label{Lemma4}{\rm(see \cite{L2004})}
		For $(w_1,...,w_m)\in (A_{p_1},...,A_{p_m})$ with $1\leq p_1,...,p_m<\infty $, and for any $0<\theta_1,...,\theta_m<1$ such that $\theta_1+\cdots+\theta_m=1$, we have $w_1^{\theta_1}\cdots w_m^{\theta_m}\in A_{\max\{p_1,...,p_m\}}.$
	\end{lemma}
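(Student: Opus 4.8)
The plan is to set $p:=\max\{p_1,\dots,p_m\}$ and to verify directly that $w:=\prod_{j=1}^m w_j^{\theta_j}$ satisfies the $A_p$ condition of Definition \ref{definition2.1}. The first ingredient is the nesting of the Muckenhoupt scale, namely $A_q\subset A_p$ whenever $1\le q\le p$; since each $p_j\le p$, this gives $w_j\in A_{p_j}\subset A_p$ for every $j$. It therefore suffices to show that a weighted geometric mean of finitely many $A_p$ weights, with exponents $\theta_j$ summing to $1$, is again an $A_p$ weight, and to bound its characteristic by those of the factors. Because the argument only uses the averaging structure of $A_p$ together with H\"older's inequality, I expect it to go through on the \text{\rm RD}-space exactly as on $\mathbb{R}^n$, with neither the doubling nor the reverse-doubling property playing any role.

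I would first treat the generic case $p>1$. Fixing a ball $B\subset\mathcal{X}$ and applying H\"older's inequality with the exponents $1/\theta_j$ (whose reciprocals $\theta_j$ sum to $1$) to the averaged first factor yields, after distributing $\mu(B)=\prod_j\mu(B)^{\theta_j}$,
\[
\frac{1}{\mu(B)}\int_B w\,\d{\mu}=\frac{1}{\mu(B)}\int_B\prod_{j=1}^m w_j^{\theta_j}\,\d{\mu}\le\prod_{j=1}^m\left(\frac{1}{\mu(B)}\int_B w_j\,\d{\mu}\right)^{\theta_j}.
\]
For the dual factor I would write $w^{1-p'}=\prod_{j=1}^m\bigl(w_j^{1-p'}\bigr)^{\theta_j}$ and apply H\"older's inequality again with the same exponents, obtaining the analogous bound of $\frac{1}{\mu(B)}\int_B w^{1-p'}\,\d{\mu}$ by $\prod_{j=1}^m\bigl(\frac{1}{\mu(B)}\int_B w_j^{1-p'}\,\d{\mu}\bigr)^{\theta_j}$.

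Multiplying the first estimate by the $(p-1)$-th power of the second and regrouping the factors by the index $j$, the product factorizes into $\prod_{j=1}^m$ of the $\theta_j$-th power of $\bigl(\frac{1}{\mu(B)}\int_B w_j\,\d{\mu}\bigr)\bigl(\frac{1}{\mu(B)}\int_B w_j^{1-p'}\,\d{\mu}\bigr)^{p-1}$, which is exactly the $A_p$ characteristic expression for $w_j$ on $B$. Since each $w_j\in A_p$, every such quantity is bounded uniformly in $B$ by the finite supremum $[w_j]_{A_p}$ of Definition \ref{definition2.1}; taking the supremum over all balls then gives $[w]_{A_p}\le\prod_{j=1}^m[w_j]_{A_p}^{\theta_j}<\infty$. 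The degenerate case $p=1$, which forces every $p_j=1$, would be handled separately via the $A_1$ formulation: one H\"older estimate bounds the average of $w$ over $B$ by the product of the $\theta_j$-powers of the averages of the $w_j$, each of the latter is $\lesssim\inf_B w_j$ because $w_j\in A_1$, and $\prod_j(\inf_B w_j)^{\theta_j}\le\inf_B w$ closes the argument.

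I do not anticipate any genuine obstacle: the whole proof reduces to two bookkeeping applications of H\"older's inequality plus the nesting of the $A_p$ classes. The only point requiring care is the initial reduction to the common exponent $p=\max_j p_j$, so that a single dual exponent $p'$ is used consistently across all $m$ factors in the second H\"older step; without this, the weights $w_j^{1-p_j'}$ would live in incompatible scales and the factorization would fail.
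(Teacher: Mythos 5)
Your proof is correct. The paper does not prove this lemma at all --- it is quoted directly from Grafakos--Martell \cite{L2004} --- and your argument (reduce to the common class $A_p$ with $p=\max_j p_j$ via the nesting of the Muckenhoupt scale, then apply H\"older's inequality with exponents $1/\theta_j$ to both the average of $w$ and the average of $w^{1-p'}$, regrouping to get $[w]_{A_p}\le\prod_j[w_j]_{A_p}^{\theta_j}$, with the $A_1$ case handled via $\prod_j(\inf_B w_j)^{\theta_j}\le\inf_B w$) is exactly the standard proof from that reference, and, as you note, it transfers verbatim to RD-spaces since it uses only the averaging structure.
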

	
	\begin{theorem}\label{Theorem4.1}
		Suppose $T$ is an $m$-\text{\rm GSSIO} on  \text{\rm RD}-space, $\displaystyle p_0,q,r_j,l_j$ are given by Definition $\ref{definition1.5}$, $j=1,... ,m$ and $ p_0'\geq\max\{r_1,...,r_m,l_1,...,l_m\}, 1/p_0 + 1/p_0' = 1.$ Let $\varphi$, $\varphi_j$ be  positive and increasing functions on $\mathbb{R}^+$, such that (\ref{a}) and (\ref{b}) for $\varphi_j$ hold, $w_j\in A_{p_j/p_0'}$, $1/p=1/p_1+\cdots+1/p_m$ and $\nu_{\vec{w}}:=\prod\limits_{j=1}^mw_j^{p/p_j}$. If $p_0'<p_j<\infty$ and $p\geq 1$, $j=1,...,m$, then there exists a constant $C>0$ such that $$\left\|T(\vec{f})\right\|_{L^{p,\varphi}(\mathcal{X},\nu_{\vec{w}})} \leq C \prod\limits_{j=1}^m\left\|f_j\right\|_{L^{p_j,\varphi_j}(\mathcal{X},w_j)}.$$ 
	\end{theorem}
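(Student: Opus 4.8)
The plan is to combine the sharp maximal estimate of Theorem \ref{Theorem2.1} with the Morrey-space tools from Lemmas \ref{Lemma4.1}, \ref{Lemma4.4} and \ref{Lemma4.5}, thereby reducing the multilinear bound to $m$ linear ones. First I would check the weight membership: since each $w_j\in A_{p_j/p_0'}$ and the exponents $\theta_j:=p/p_j$ satisfy $0<\theta_j<1$ and $\sum_{j=1}^m\theta_j=1$, Lemma \ref{Lemma4} yields $\nu_{\vec w}=\prod_{j=1}^m w_j^{p/p_j}\in A_{\max_j(p_j/p_0')}\subset A_\infty$. Moreover, because $r_j\le p_0'<p_j$ we have $1/r=\sum_j 1/r_j>\sum_j 1/p_j=1/p$, i.e. $r<p$, while $r\ge 1/m$, so I may fix $\delta$ with $0<\delta<1/m\le r$. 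Then Lemma \ref{Lemma4.1}, applied to $T(\vec f)$ (which lies in $L^{r,\infty}(\mathcal{X})$ by condition (ii) of Definition \ref{definition1.5}), followed by the pointwise bound of Theorem \ref{Theorem2.1}, gives
\[
\|T(\vec f)\|_{L^{p,\varphi}(\mathcal{X},\nu_{\vec w})}\lesssim \big\|M_\delta^\sharp\big(T(\vec f)\big)\big\|_{L^{p,\varphi}(\mathcal{X},\nu_{\vec w})}\lesssim \big\|\mathcal{M}_{p_0'}(\vec f)\big\|_{L^{p,\varphi}(\mathcal{X},\nu_{\vec w})}.
\]

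Next I would factor the multilinear maximal function. By Remark \ref{Remark 2.4},
\[
\mathcal{M}_{p_0'}(\vec f)(x)=\Big[\mathcal{M}\big(|\vec f|^{p_0'}\big)(x)\Big]^{1/p_0'}\le \prod_{j=1}^m\Big[M\big(|f_j|^{p_0'}\big)(x)\Big]^{1/p_0'},
\]
and since each $w_j\in A_\infty$ and the growth conditions (\ref{a}) are assumed, Lemma \ref{Lemma4.4} separates the Morrey norm of the product:
\[
\big\|\mathcal{M}_{p_0'}(\vec f)\big\|_{L^{p,\varphi}(\mathcal{X},\nu_{\vec w})}\lesssim \prod_{j=1}^m\Big\|\big[M(|f_j|^{p_0'})\big]^{1/p_0'}\Big\|_{L^{p_j,\varphi_j}(\mathcal{X},w_j)}.
\]

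Finally I would return each factor to $f_j$. Unwinding the Morrey norm through the power $p_0'$ gives $\big\|[M(|f_j|^{p_0'})]^{1/p_0'}\big\|_{L^{p_j,\varphi_j}(\mathcal{X},w_j)}=\big\|M(|f_j|^{p_0'})\big\|_{L^{p_j/p_0',\varphi_j}(\mathcal{X},w_j)}^{1/p_0'}$; since $p_j/p_0'>1$, $w_j\in A_{p_j/p_0'}$ and $\varphi_j$ satisfies (\ref{b}), the Morrey boundedness of $M$ in Lemma \ref{Lemma4.5} gives $\|M(|f_j|^{p_0'})\|_{L^{p_j/p_0',\varphi_j}(w_j)}\lesssim \||f_j|^{p_0'}\|_{L^{p_j/p_0',\varphi_j}(w_j)}$, and rescaling back recovers $\|f_j\|_{L^{p_j,\varphi_j}(\mathcal{X},w_j)}$. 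Chaining the three displays then produces the asserted inequality.

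The main obstacle is not any single deep estimate, as each ingredient is one of the preceding lemmas, but rather the bookkeeping of exponents when passing through the power $p_0'$ and the verification that the hypotheses transfer at each stage: that $T(\vec f)$ is a priori in $L^{r,\infty}$ with $r<p$ so that Lemma \ref{Lemma4.1} applies; that one has the stronger $w_j\in A_{p_j/p_0'}$ (not merely $A_\infty$) so that Lemma \ref{Lemma4.5} applies to the linearized pieces $M(|f_j|^{p_0'})$ on $L^{p_j/p_0',\varphi_j}$; and that the admissibility conditions (\ref{a}) and (\ref{b}) on the $\varphi_j$ are precisely those required by Lemmas \ref{Lemma4.4} and \ref{Lemma4.5}. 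The a priori integrability ensuring that $T(\vec f)$ is well defined and lies in $L^{r,\infty}$ is handled, as usual, by first establishing the estimate for bounded functions with bounded support and then extending by density.
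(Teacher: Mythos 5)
Your proposal is correct and follows essentially the same route as the paper's proof: Lemma \ref{Lemma4} for $\nu_{\vec w}\in A_\infty$, then the chain Lemma \ref{Lemma4.1} $\to$ Theorem \ref{Theorem2.1} $\to$ Remark \ref{Remark 2.4} $\to$ Lemma \ref{Lemma4.4} $\to$ Lemma \ref{Lemma4.5}, with the rescaling through the power $p_0'$ at the end. Your added verifications (that $r<p$ so the a priori $L^{r,\infty}$ hypothesis of Lemma \ref{Lemma4.1} is available, and that the hypotheses of each lemma transfer) are sound and only make explicit what the paper leaves implicit.
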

	
	\begin{proof}
By Lemma \ref{Lemma4}, we can get that $\nu_{\vec{w}} \in A_{\max\{A_{p_1/p_0'},...,A_{p_m/p_0'}\}} \subset A_{\infty}$. Taking a $\delta$ such that $0<\delta < 1/m$,  by Lemma \ref{Lemma4.1}, Theorem \ref{Theorem2.1}, Remark \ref{Remark 2.4},  Lemma \ref{Lemma4.4}, and Lemma \ref{Lemma4.5}, we have 
\begin{align*}
	\left\|T(\vec{f})\right\|_{L^{p,\varphi}(\mathcal{X},\nu_{\vec{w}})} & \lesssim \left\|M_{\delta}^{\sharp}(T(\vec{f}))\right\|_{L^{p,\varphi}(\mathcal{X},\nu_{\vec{w}})}\lesssim \left\|\mathcal{M}_{p_0'}(\vec{f})\right\|_{L^{p,\varphi}(\mathcal{X},\nu_{\vec{w}})}\\
	&\lesssim \left\|\prod\limits_{j=1}^mM_{p_0'}(f_j)\right\|_{L^{p,\varphi}(\mathcal{X},\nu_{\vec{w}})}\lesssim \prod\limits_{j=1}^m\left\|M_{p_0'}(f_j)\right\|_{L^{p_j,\varphi_j}(\mathcal{X},w_j)}\\
	&=\prod\limits_{j=1}^m\left\|M(|f_j|^{p_0'})\right\|_{L^{p_j/p_0',\varphi_j}(\mathcal{X},w_j)}^{1/p_0'}\lesssim \prod\limits_{j=1}^m\left\||f_j|^{p_0'}\right\|_{L^{p_j/p_0',\varphi_j}(\mathcal{X},w_j)}^{1/p_0'}\\
	&=\prod\limits_{j=1}^m\left\|f_j\right\|_{L^{p_j,\varphi_j}(\mathcal{X},w_j)}.
\end{align*}
This completes the proof.
	\end{proof}
When we return to the weighted Morrey space on RD-space, that is, by taking $\varphi(t)=\varphi_1(t)=...=\varphi_m(t)=t^k$, $0<k<1$, both conditions (\ref{a})  and (\ref{b}) are naturally satisfied. Then we can obtain the boundedness on the weighted Morrey space, which is the corollary as follows.
	\begin{corollary}
		Suppose $T$ is an $m$-\text{\rm GSSIO} on  \text{\rm RD}-space, $\displaystyle p_0,q,r_j,l_j$ are given by Definition $\ref{definition1.5}$, $j=1,... ,m$ and $ p_0'\geq\max\{r_1,...,r_m,l_1,...,l_m\}, 1/p_0 + 1/p_0' = 1.$ Let $w_j\in A_{p_j/p_0'}$, $\nu_{\vec{w}}:=\prod\limits_{j=1}^mw_j^{p/p_j}$, and $1/p=1/p_1+\cdots+1/p_m$. If $0<k<1$, $p_0'<p_j<\infty$ and $p\geq 1$, $j=1,...,m$, then there exists a constant $C>0$ such that $$\left\|T(\vec{f})\right\|_{L^{p,k}(\mathcal{X},\nu_{\vec{w}})} \leq C \prod\limits_{j=1}^m\left\|f_j\right\|_{L^{p_j,k}(\mathcal{X},w_j)}.$$ 
	\end{corollary}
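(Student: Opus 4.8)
The plan is to obtain this corollary as a direct specialization of Theorem \ref{Theorem4.1}, taking $\varphi(t)=\varphi_1(t)=\cdots=\varphi_m(t)=t^{k}$ with $0<k<1$; the only genuine work is to confirm that this power function satisfies the structural hypotheses (\ref{a}) and (\ref{b}) imposed in that theorem. I would first record that, by the Remark following Definition \ref{definition4.1}, the choice $\varphi(t)=t^{k}$ identifies the generalized weighted Morrey norm with the weighted Morrey norm, namely $L^{p,\varphi}(\mathcal{X},w)=L^{p,k}(\mathcal{X},w)$ and likewise $L^{p_j,\varphi_j}(\mathcal{X},w_j)=L^{p_j,k}(\mathcal{X},w_j)$. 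Consequently the conclusion of Theorem \ref{Theorem4.1} will read verbatim as the estimate claimed here once (\ref{a}) and (\ref{b}) are checked.

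Next I would verify the structural conditions for the power function, writing $s\in\mathbb{R}^{+}$ for the dummy variable that is named $k$ in (\ref{a}) so as to avoid clashing with the Morrey index. For (\ref{a}), all three requirements hold with equality: $\prod_{j=1}^{m}\varphi_j(t_j)=\prod_{j=1}^{m}t_j^{\,k}=\bigl(\prod_{j=1}^{m}t_j\bigr)^{k}=\varphi\bigl(\prod_{j=1}^{m}t_j\bigr)$, next $\varphi(ts)=(ts)^{k}=t^{k}s^{k}=\varphi(t)\varphi(s)$, and finally $\varphi_j(t)^{s}=(t^{k})^{s}=t^{ks}=(t^{s})^{k}=\varphi_j(t^{s})$. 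For (\ref{b}), given $s\geq t>0$ one has $\varphi(s)/s=s^{\,k-1}$ and $\varphi(t)/t=t^{\,k-1}$; since $0<k<1$ the exponent $k-1$ is negative, so $u\mapsto u^{\,k-1}$ decreases on $(0,\infty)$ and hence $s^{\,k-1}\leq t^{\,k-1}$, giving (\ref{b}) with constant $c=1$. Since $t^{k}$ is plainly positive and increasing on $\mathbb{R}^{+}$, every assumption Theorem \ref{Theorem4.1} places on $\varphi$ and on the $\varphi_j$ is met.

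Finally, the remaining hypotheses of the corollary---$p_0'\geq\max\{r_1,\dots,r_m,l_1,\dots,l_m\}$ with $1/p_0+1/p_0'=1$, $w_j\in A_{p_j/p_0'}$, $p_0'<p_j<\infty$, $p\geq 1$, and $\nu_{\vec{w}}=\prod_{j=1}^{m}w_j^{p/p_j}$---coincide exactly with those of Theorem \ref{Theorem4.1}, so I would simply invoke that theorem with the chosen $\varphi,\varphi_1,\dots,\varphi_m$ and then rewrite the two norms via the identifications above to conclude $\|T(\vec{f})\|_{L^{p,k}(\mathcal{X},\nu_{\vec{w}})}\lesssim\prod_{j=1}^{m}\|f_j\|_{L^{p_j,k}(\mathcal{X},w_j)}$. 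I do not anticipate any real obstacle: the whole argument reduces to the elementary verification of (\ref{a}) and (\ref{b}) for $t^{k}$, and the only point requiring care is the notational overlap between the Morrey index $k$ and the dummy variable also denoted $k$ in (\ref{a}), handled by the renaming above.
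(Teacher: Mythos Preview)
Your proposal is correct and follows exactly the approach the paper takes: specialize Theorem \ref{Theorem4.1} with $\varphi(t)=\varphi_1(t)=\cdots=\varphi_m(t)=t^{k}$ and verify that this choice satisfies conditions (\ref{a}) and (\ref{b}). The paper in fact only states that both conditions ``are naturally satisfied'' and leaves the elementary verification implicit, so your write-up is if anything more explicit than the original.
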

	\begin{remark}
	 When considering weighted Morrey spaces on \text{\rm RD}-space, our results are also novel. Even when we return to the Euclidean space, the Morrey space on \text{\rm RD}-space corresponds to the Morrey space in Euclidean space, and our results remain the most recent.
	\end{remark}

	\bigskip

	\noindent Kang Chen and Yan Lin
	
	\smallskip
	
	\noindent School of Science, China University of Mining and
	Technology, Beijing 100083,  People's Republic of China
	
	\smallskip
	
	\noindent{\it E-mails:} \texttt{chenkang@student.cumtb.edu.cn} (K. Chen)
	
	\noindent\phantom{{\it E-mails:} }\texttt{linyan@cumtb.edu.cn} (Y. Lin)

	\bigskip
	
	\noindent Shuhui Yang 
	
	\smallskip
	
	\noindent  Laboratory of Mathematics and Complex
	Systems (Ministry of Education of China),
	School of Mathematical Sciences, Beijing Normal
	University, Beijing 100875, People's Republic of China
	
	\smallskip
	
	\noindent{\it E-mail:} \texttt{shuhuiyang@bnu.edu.cn}

\end{document}